\documentclass[reqno,11pt]{amsart}
\usepackage{amsmath, latexsym, amsfonts, amssymb, amsthm, amscd}
\usepackage{graphics,epsf,psfrag}
\usepackage{graphics,epsf,psfrag}
 \usepackage[utf8]{inputenc}
\usepackage{stmaryrd}
\usepackage{mathabx}
\usepackage{hyperref}
\usepackage{xcolor}

\renewcommand{\bar}{\overline}

\setlength{\oddsidemargin}{5mm}
\setlength{\evensidemargin}{5mm}
\setlength{\textwidth}{150mm}
\setlength{\headheight}{0mm}
\setlength{\headsep}{12mm}
\setlength{\topmargin}{0mm}
\setlength{\textheight}{220mm}
\setcounter{secnumdepth}{3}
\setcounter{tocdepth}{1}
\newcommand{\lint}{\llbracket}
\newcommand{\rint}{\rrbracket}

\numberwithin{equation}{section}

\newtheorem{theorema}{Theorem}

\newtheorem{theorem}{Theorem}[section]
\newtheorem{lemma}[theorem]{Lemma}
\newtheorem{proposition}[theorem]{Proposition}

\newtheorem{rem}[theorem]{Remark}



\newcommand{\dd}{\mathrm{d}}
\newcommand{\ind}{\mathbf{1}}

\renewcommand{\tilde}{\widetilde}
\renewcommand{\hat}{\widehat}
\newcommand{\cc}{\complement}

\newcommand{\cA}{{\ensuremath{\mathcal A}} }

\newcommand{\cF}{{\ensuremath{\mathcal F}} }

\newcommand{\cP}{{\ensuremath{\mathcal P}} }

\newcommand{\cC}{{\ensuremath{\mathcal C}} }
\newcommand{\cN}{{\ensuremath{\mathcal N}} }

\newcommand{\cD}{{\ensuremath{\mathcal D}} }

\newcommand{\cZ}{{\ensuremath{\mathcal Z}} }

\newcommand{\cK}{{\ensuremath{\mathcal K}} }
\newcommand{\cM}{{\ensuremath{\mathcal M}} }

\newcommand{\bP}{{\ensuremath{\mathbf P}} }

\newcommand{\bE}{{\ensuremath{\mathbf E}} }



\DeclareMathSymbol{\leqslant}{\mathalpha}{AMSa}{"36} 
\DeclareMathSymbol{\geqslant}{\mathalpha}{AMSa}{"3E} 
\DeclareMathSymbol{\eset}{\mathalpha}{AMSb}{"3F}     

\newcommand{\maxtwo}[2]{\max_{\substack{#1 \\ #2}}} 
\newcommand{\limtwo}[2]{\lim_{\substack{#1 \\ #2}}}     


\newcommand{\bbC}{{\ensuremath{\mathbb C}} }

\newcommand{\bbE}{{\ensuremath{\mathbb E}} }

\newcommand{\bbL}{{\ensuremath{\mathbb L}} }

\newcommand{\bbN}{{\ensuremath{\mathbb N}} }

\newcommand{\bbP}{{\ensuremath{\mathbb P}} }

\newcommand{\bbR}{{\ensuremath{\mathbb R}} }



\newcommand{\gep}{\varepsilon}       

\newcommand{\gG}{\Gamma}

\newcommand{\gD}{\Delta}

\newcommand{\go}{\omega}

\newcommand{\gl}{\lambda}


\makeatletter
\def\captionfont@{\footnotesize}
\def\captionheadfont@{\scshape}

\long\def\@makecaption#1#2{%
  \vspace{2mm}
  \setbox\@tempboxa\vbox{\color@setgroup
    \advance\hsize-6pc\noindent
    \captionfont@\captionheadfont@#1\@xp\@ifnotempty\@xp
        {\@cdr#2\@nil}{.\captionfont@\upshape\enspace#2}%
    \unskip\kern-6pc\par
    \global\setbox\@ne\lastbox\color@endgroup}%
  \ifhbox\@ne 
    \setbox\@ne\hbox{\unhbox\@ne\unskip\unskip\unpenalty\unkern}%
  \fi
  \ifdim\wd\@tempboxa=\z@ 
    \setbox\@ne\hbox to\columnwidth{\hss\kern-6pc\box\@ne\hss}%
  \else 
    \setbox\@ne\vbox{\unvbox\@tempboxa\parskip\z@skip
        \noindent\unhbox\@ne\advance\hsize-6pc\par}%
\fi
  \ifnum\@tempcnta<64 
    \addvspace\abovecaptionskip
    \moveright 3pc\box\@ne
  \else 
    \moveright 3pc\box\@ne
    \nobreak
    \vskip\belowcaptionskip
  \fi
\relax
}
\makeatother
\def\writefig#1 #2 #3 {\rlap{\kern #1 truecm
\raise #2 truecm \hbox{#3}}}


\title{Convergence in law for Complex Gaussian Multiplicative Chaos in phase III}

\author{Hubert Lacoin}
\address{
  IMPA, Institudo de Matem\'atica Pura e Aplicada, Estrada Dona Castorina 110
Rio de Janeiro, CEP-22460-320, Brasil. 
}

\begin{document}

 \begin{abstract}
 Gaussian Multiplicative Chaos (GMC) is informally defined as  a random measure $e^{\gamma X} \dd x$ where $X$ is Gaussian field on $\bbR^d$ (or an open subset of it) whose correlation function is of the form 
 $ K(x,y)= \log \frac{1}{|y-x|}+ L(x,y),$
 where $L$ is a continuous function $x$ and $y$ and $\gamma=\alpha+i\beta$ is a complex parameter. In the present paper, we consider the case $\gamma\in \cP'_{\mathrm{III}}$
 where 
 $$  \cP'_{\mathrm{III}}:= \{ \alpha+i \beta \ : \alpha,\gamma \in \bbR , \ |\alpha|<\sqrt{d/2}, \ \alpha^2+\beta^2\ge d  \}.$$
 We prove that if $X$ is replaced by the approximation $X_\gep$ obtained by convolution with a smooth kernel, then  $e^{\gamma X_\gep} \dd x$, when properly rescaled, has an explicit nontrivial limit in distribution when $\gep$ goes to zero. This limit does not depend on the specific convolution kernel which is used to define $X_{\gep}$ and can be described as 
 a complex Gaussian white noise with a random intensity given by  a real GMC associated with parameter $2\alpha$.
   \\[10pt]
  2010 \textit{Mathematics Subject Classification: 60F99,  	60G15,  	82B99.}\\
  \textit{Keywords: Random distributions, $\log$-correlated fields, Gaussian Multiplicative Chaos.}
 \end{abstract}

\maketitle

 
 \section{Model and results}

 \subsection{The exponential of a log-correlated field via convolution approximation}
 
Given an open domain $\cD\subset \bbR^d$, we consider $K: \cD^2 \to \bbR$ to be a positive definite kernel on $\cD$ which admits a decomposition in the following form
 \begin{equation}\label{fourme}
  K(x,y):= L(x,y)+\log \frac{1}{|x-y|},
 \end{equation}
where $L$ is a continuous function.
A kernel $K$ is  positive definite if for any bounded continuous function $\rho$ with compact support on $\cD$
\begin{equation}\label{ladefpos}
 \int_{\cD^2 }  K(x,y) \rho(x)\rho(y)\dd x \dd y\ge 0.
\end{equation}
 We want to consider a Gaussian field $X$ with covariance $K$ and find a way to make sense of the distribution $e^{\gamma X(x)} \dd x$ when $\gamma=\alpha+i\beta$ is a complex parameter.
Such an exponentiation a $\log$-correlated field is what is called Gaussian Multiplicative Chaos (GMC) and has been the focus of a large amount of mathematical work first in the case $\gamma\in \bbR$ (see \cite{RVreview} for a review and references) and more recently  $\gamma \in \bbC$ (see \cite{junnila2019regularity,  junnila2019, junnila2018,  lacoin2020, LRV19, LRV15} and references therein).
The standard way to define GMC is to consider $X$ taking value in the field of distribution and then use an approximation of $X$ and a passage to the limit to define its exponential. 

\medskip

Since $K$ is infinite on the diagonal, it is not possible to define directly a Gaussian field indexed by $\cD$ with covariance function $K$.
We consider thus a distributional Gaussian field, that is, a field indexed by a set of signed measure. Like in \cite{Natele}, we define $\cM^+_K$ to be the set of positive Borel measures on $\cD$ such that 
\begin{equation}
 \int_{\cD^2}  |K(x,y)|\mu(\dd x) \mu(\dd y) <\infty
\end{equation}
and let $\cM_K$ be the space of signed measure spanned by $\cM^+_K$
\begin{equation}
 \cM_K:=\left\{ \mu_+- \mu_- \ : \ \mu_+, \mu_- \in \cM^+_K \right\}.
\end{equation}
We define $\bar K$ as the following quadratic form on  $\cM_K$  
\begin{equation}\label{hatK}
 \bar K(\mu,\mu')=
\int_{\cD^2}  K(x,y)\mu(\dd x) \mu'(\dd y),
\end{equation}
and finally define $X= (\langle X, \mu \rangle)_{\mu\in \cM_K}$ as a the random field indexed by $\cM_{K}$ with covariance kernel given by $\bar K$.

\medskip

The distributional field $X$ can be approximated by a sequence of  functional fields - that is,  fields indexed by a subset of $\cD$ - by the mean of convolution with smooth kernels (we have to consider a strict subset of $\cD$ to avoid boundary effects).
 Consider $\theta$ a nonnegative $C^{\infty}$ function whose compact support is included in $B(0,1)$ the $d$-dimensional  Euclidean ball of radius one,  $\int_{B(0,1)} \theta(x) \dd x=1.$
We define for $\gep>0$,
$\theta_{\gep}:=\gep^{-d} \theta( \gep^{-1}\cdot)$
and consider the convoluted version of $X$ on the set
\begin{equation}\label{cdgep}
\cD_{\gep}:=\{ x\in \cD \ : \ \min_{y\in  \bbR^d \setminus \cD} |x-y| \ge  2\gep \} 
\end{equation}
(or $\cD_{\gep}=\bbR^d$ convention if $\cD=\bbR^d$). It is defined by 
\begin{equation}
 X_{\gep}(x):= \langle X, \theta_{\gep}(x-\cdot) \rangle
\end{equation}
where  the function $\theta_{\gep}(x-\cdot)$ is identified with the measure $\theta_{\gep}(y-x)\dd y$ on $\cD$.
With this definition one can check that $X_{\gep}(x)$ has covariance 
\begin{equation}\label{labig}
K_{\gep}(x,y):=\bbE[X_{\gep}(x)X_{\gep}(y)]=
\int_{\bbR^{2d}} \theta_{\gep}(x-z_1) \theta_{\gep}(y-z_2)
K(z_1,z_2)\dd z_1 \dd z_2.
\end{equation}
We simply write $K_{\gep}(x)$ when $x=y$.
 Note that since $K_{\gep}$ is infinitely differentiable, by Kolmogorov's criterion (see e.g.\  \cite[Theorem 2.9]{legallSto}),  there exists a version of $X_{\gep}$ which is continuous
 in $x$. Considering this version of the field, we can make sense of integrals of measurable functionals of $X_\gep(\cdot)$.
 This allows to define, for any $f\in C^\infty_c(\cD)$ (the set of \textit{real valued} infinitely differentiable functions with  compact  support), the quantity
 \begin{equation}
    M^{(\gamma)}_{\gep}(f):=\int_{\bbR^d} f(x) e^{\gamma X_{\gep}(x)-\frac{\gamma^2}{2}K_\gep(x)} \ind_{\cD_{\gep}}(x)\dd x.
 \end{equation}
 The restriction  to $\cD_{\gep}$ ensures that $K_\gep$ is well defined and uniformly bounded, which is convenient. It disappears when $\gep$ goes to zero, since for $\gep$ sufficiently small the support of $f$ is included in $\cD_{\gep}$.
The question of focus in the present paper is the existence 
of a nontrivial limit of the distribution $M^{(\gamma)}_{\gep}(\cdot )$ when $\gep$ tend to zero (possibly with a rescaling by a factor depending on $\gep$). Such a limit gives natural 
 interpretation for the formal distribution $e^{\gamma X} \dd x$.

 \subsection{The case of real GMC}

The  case when the parameter in the exponentiation is real (in that case we write it as $\alpha$ instead of $\gamma$) has been extensively studied, starting with the work of Kahane \cite{zbMATH03960673} (see for instance \cite{Natele,MR2819163,robertvargas}, we refer to the introduction in \cite{Natele} for a detailed chronological account). 
 These works established that when $\alpha \in (-\sqrt{2d},\sqrt{2d})$ then $M^{(\alpha)}_{\gep}$ converges to a non trivial limit. As this result, together its  variant Theorem \ref{zabbid2} presented in the next section, play a pivotal role in our proof so we state it in full details.

\begin{theorema}\cite[Theorem 1.1]{Natele}\label{zabbid}
 For any $\alpha \in\bbR$ with $|\alpha |<\sqrt{2d}$, then there exists a random distribution $M^{(\alpha)}_{0}$ such that for every $\theta$  and every  $f\in C^{\infty}_c(\cD)$, we have the following convergence in $\bbL_1$
  $$   \lim_{\gep \to 0}M^{(\alpha)}_{\gep}(f)= M^{(\alpha)}_{0}(f). $$
 The distribution $M^{(\alpha)}_{0}$ is a locally finite measure whose support is dense in $\cD$. The limit does not depend on the convolution kernel $\theta$ used in the definition of $M^{(\alpha)}_{\gep}$.
\end{theorema}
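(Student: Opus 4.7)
My strategy is to establish $\bbL_1$ convergence, which forces the limit to be unique. The first moment is bounded uniformly since $\bbE[M^{(\alpha)}_{\gep}(f)]=\int_{\cD_{\gep}}f(x)\dd x\to\int_{\cD}f(x)\dd x$ by the exponential martingale normalization. It thus suffices to prove that the family $\{M^{(\alpha)}_{\gep}(f)\}_{\gep>0}$ is uniformly integrable and Cauchy in probability along $\gep\to 0$.

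Both ingredients rest on a ``good event'' truncation. For a small parameter $\gd>0$ (to be chosen depending on $\alpha$), I introduce
\[
G^{\gd}_{\gep}(x):=\{X_{\gep}(x)\le(\alpha+\gd)K_{\gep}(x)\}
\qquad\text{and}\qquad
M^{(\alpha),g}_{\gep}(f):=\int f(x)\ind_{G^{\gd}_{\gep}(x)}e^{\alpha X_{\gep}(x)-\tfrac{\alpha^{2}}{2}K_{\gep}(x)}\ind_{\cD_{\gep}}(x)\dd x.
\]
The key computation is Cameron--Martin: tilting $\bbP$ by $e^{\alpha X_{\gep}(x)-\tfrac{\alpha^{2}}{2}K_{\gep}(x)}$ shifts the field to $X_{\gep}(\cdot)+\alpha K_{\gep}(x,\cdot)$, so that under the tilted law $\bbP_{x}$ the centered variable $X_{\gep}(x)-\alpha K_{\gep}(x)$ has the same Gaussian law as $X_{\gep}(x)$ under $\bbP$. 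Consequently
\[
\bbE\bigl[M^{(\alpha)}_{\gep}(f)-M^{(\alpha),g}_{\gep}(f)\bigr]=\int f(x)\,\bbP\!\left[X_{\gep}(x)>\gd K_{\gep}(x)\right]\dd x\le C(f)\,\gep^{\gd^{2}/2}\longrightarrow 0,
\]
using a Gaussian tail bound and $K_{\gep}(x)\sim|\log\gep|$. Hence the approximation and its truncated counterpart have the same $\bbL_1$ limit if one exists.

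For uniform integrability I aim for a uniform $\bbL_2$ bound on $M^{(\alpha),g}_{\gep}(f)$. A double Girsanov shift at $(x,y)$ gives
\[
\bbE\bigl[(M^{(\alpha),g}_{\gep}(f))^{2}\bigr]=\iint f(x)f(y)\,e^{\alpha^{2}K_{\gep}(x,y)}\,\bbP_{x,y}\!\left[G^{\gd}_{\gep}(x)\cap G^{\gd}_{\gep}(y)\right]\dd x\dd y,
\]
and under $\bbP_{x,y}$ the event $G^{\gd}_{\gep}(x)$ requires a Gaussian deviation of order $\alpha K_{\gep}(x,y)-\gd K_{\gep}(x)$. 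Dividing into scales $|x-y|\gtrsim\gep$ and $|x-y|\lesssim\gep$ and using the tail estimate, the integrand is dominated by $C\,|x-y|^{-\alpha^{2}+(\alpha-\gd)^{2}}=C\,|x-y|^{-2\alpha\gd+\gd^{2}}$ at the relevant scale, which is integrable in dimension $d$ as soon as $\gd$ is small. This is exactly the improvement over the naive second moment, which only converges for $\alpha^{2}<d$, and extends the range to the full subcritical interval $|\alpha|<\sqrt{2d}$. Combined with the bound $\bbE[M^{(\alpha)}_{\gep}(f)]<\infty$ from step one, this yields uniform integrability.

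Convergence in probability I would derive by a Cauchy argument: for $\gep'\le\gep$ the joint law of $(X_{\gep},X_{\gep'})$ is Gaussian with an explicit covariance structure inherited from $K$, and on the intersection of the good events at both scales the covariance between $M^{(\alpha),g}_{\gep}(f)$ and $M^{(\alpha),g}_{\gep'}(f)$ can be computed as an integral whose integrand converges pointwise and is dominated by the bound established above. Dominated convergence then gives $\bbE[(M^{(\alpha),g}_{\gep}(f)-M^{(\alpha),g}_{\gep'}(f))^{2}]\to 0$, which upgrades to $\bbL_1$ convergence via UI. Independence of the convolution kernel $\theta$ is proved by the same comparison principle applied to $(X^{\theta}_{\gep},X^{\theta'}_{\gep})$: the joint covariance converges to the same limiting kernel $K$. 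Density of the support follows since the limit is a nonnegative measure whose mass on any ball $B\subset\cD$ has positive expectation and by a standard zero--one type argument is almost surely positive. The main obstacle, in my view, is the simultaneous control of the truncated second moment across the full subcritical range; the exponent book-keeping in the Gaussian tail bound above is delicate near the boundary $|\alpha|\nearrow\sqrt{2d}$ and is what drives the choice $\gd\to 0$.
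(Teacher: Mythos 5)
First, a point of order: the paper does not prove Theorem~\ref{zabbid} at all — it is quoted verbatim from \cite{Natele} — so there is no internal proof to compare against. Your plan is essentially an attempt to reconstruct Berestycki's argument, and it contains a genuine gap in its central step, the uniform second-moment bound for the truncated measure. Your good event $G^{\gd}_{\gep}(x)=\{X_{\gep}(x)\le(\alpha+\gd)K_{\gep}(x)\}$ constrains the field only at the terminal scale $\gep$. Under the two-point tilt $\bbP_{x,y}$ the mean of $X_{\gep}(x)$ is $\alpha(K_{\gep}(x)+K_{\gep}(x,y))$, so for a pair at \emph{fixed} distance $|x-y|=r$ the required downward deviation $\alpha K_{\gep}(x,y)-\gd K_{\gep}(x)\approx\alpha\log(1/r)-\gd\log(1/\gep)$ tends to $-\infty$ as $\gep\to0$, hence $\bbP_{x,y}\!\left[G^{\gd}_{\gep}(x)\cap G^{\gd}_{\gep}(y)\right]\to1$ and your integrand converges pointwise to $f(x)f(y)e^{\alpha^{2}K(x,y)}\asymp|x-y|^{-\alpha^{2}}$. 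By Fatou, $\liminf_{\gep\to0}\bbE[(M^{(\alpha),g}_{\gep}(f))^{2}]=\infty$ whenever $\alpha^{2}\ge d$: the single-scale truncation buys nothing beyond the naive $\bbL_2$ range $|\alpha|<\sqrt{d}$, where no truncation is needed in the first place.

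The missing idea — and the heart of \cite{Natele} — is that the barrier must be imposed at \emph{all} intermediate scales, $G_{\gep}(x)=\{X_{\gep'}(x)\le(\alpha+\gd)\log(1/\gep')\ \forall\,\gep'\in[\gep,1]\}$. For a pair at distance $r$ one then invokes the constraint at scale $\gep'\approx r$, where the tilted mean of $X_{r}(x)$ is $\approx2\alpha\log(1/r)$, yielding a Gaussian cost $r^{(\alpha-\gd)^{2}/2}$ and an integrand $r^{-\alpha^{2}+(\alpha-\gd)^{2}/2}$, which is integrable near the diagonal precisely when $\alpha^{2}<2d$ (letting $\gd\downarrow0$). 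Your claimed exponent $-\alpha^{2}+(\alpha-\gd)^{2}=-2\alpha\gd+\gd^{2}$ is symptomatic of two slips: the factor $1/2$ from the Gaussian variance is dropped, and — more tellingly — your bound is integrable for \emph{every} $\alpha$ once $\gd$ is small, so nothing in it uses $|\alpha|<\sqrt{2d}$; an estimate that cannot fail in the supercritical regime cannot be the one driving the theorem. The remaining skeleton (Cameron--Martin control of the bad event in first moment, uniform integrability plus a Cauchy-in-$\bbL_2$ argument, kernel independence by coupling two convolution approximations) is the right architecture once the multi-scale truncation is in place, though the bad-event estimate then requires a union bound over scales rather than a single Gaussian tail.
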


The condition $|\alpha |<\sqrt{2d}$ is optimal : when $|\alpha|\ge \sqrt{2d}$, then $\lim_{\gep\to 0} M^{(\gamma)}_{\gep}(f)=0$ for all $f$. 
When $\alpha=\pm \sqrt{2d}$, one can still obtain a nontrivial limit by adding a scaling factor: the measure  $\sqrt{\log (1/\gep)} M^{(\pm \sqrt{2d})}_{\gep}(\cdot)$ converges in probability to a positive measure referred to as the \textit{critical} multiplicative chaos (see \cite{MR3262492} for a first derivation \cite{junnila2017ejp} for uniqueness of the limit and \cite{powell2020critical} for an up-to-date review). When $|\alpha|>\sqrt{2d}$, one should still obtain a convergence after an adequate rescaling but the convergence is of a different nature since it only holds in distribution (see \cite{madaule2016} for such a result with $X_{\gep}$ replaced by a martingale sequence of approximation similar to the one considered in Section \ref{martingaledecompo} of the present paper).

\subsection{Complex white noise with random intensity given by a Real GMC}

In order to introduce our limit, let us introduce the notion of the complex white noise with a
random intensity.

\medskip

For $\gamma\in \cP'_{\mathrm {III}}$ we define $\mathfrak M^{(\gamma)}$ to be a complex white noise with intensity measure given by 
 $M^{(2\alpha)}_0(e^{|\gamma|^2L}\cdot)$ where $\alpha\in (-\sqrt{2d},\sqrt{2d})$.
It is a random linear form which is constructed jointly with $X$, on an extended space (we let $\bbP$ denote the corresponding probability).
Conditionally to $X$, for $f\in C^{\infty}_c(\cD)$,  $\mathfrak M^{(\gamma)}(f)$ is a complex Gaussian random variable, with independent real and imaginary parts whose variances equal  
 $$ M^{(2\alpha)}_0(e^{|\gamma|^2L} f^2)= \int_{\cD} e^{|\gamma|^2 L(x,x)}f(x)^2 M^{(2\alpha)}_0(\dd x).$$
Formally, $\mathfrak M^{(\gamma)}(\cdot)$ is a random process indexed by $C^{\infty}_c(\cD)$
whose joint law with $X$, which we denote by  $\bar \bbP$, satisfies
 for any $n,m\ge 1$,  $\mu_1,\dots,\mu_m \in \cM_K$, $f_1,\dots,f_n \in C^{\infty}_c(\bbR^d)$
 and any bounded measurable function $F$ on $\bbR^{m}\times \bbC^{n}$
\begin{multline}\label{cgwn}
 \bar\bbE \left[ F\left( (\langle X, \mu_i \rangle)_{i=1}^m, ( \mathfrak M^{(\gamma)}(f_j))^n_{j=1}  \right)  \right]\\= \bbE\otimes \bE \left[  F\left( (\langle X, \mu_i \rangle)_{i=1}^m, \Sigma[\gamma,X, (f_j)_{j=1}^n]\cdot \cN_n  \right) \right] 
\end{multline}
where $\cN_n$ is an $n$ dimensional vector (with probability distribution $\bP$) whose coordinate are IID standard complex Gaussian variables, and 
$\Sigma[\gamma,X, (f_j)_{j=1}^n]$ is the positive definite square root of the matrix
$$
 \left(M^{(2\alpha)}(e^{|\gamma|^2 L}f_if_j)\right)_{i,j=1}^n.$$

 \medskip
 
 The process $\mathfrak M^{(\gamma)}$ can be seen as a random distribution. More precisely, there exists a version of the process $\mathfrak M^{(\gamma)}$ taking values in the local Sobolev space $H^{-u}_{\mathrm{loc}}(\cD)$ with $u>d/2$ (see definition \eqref{locsob} below).
This regularity for $\mathfrak M^{(\gamma)}$ can for instance be obtained by  the combining  Proposition \ref{tiggh} and Theorem \ref{main} proved below.

\subsection{Our main result}

Our main theorem concerns the convergence of $M^{(\gamma)}_{\gep}$ for complex values $\gamma$. More precisely we consider $\gamma$ in the following range of parameters
 \begin{equation}\label{pIIIprime}
 \cP'_{\mathrm{III}}:= \{ \alpha+i \beta \ : \alpha,\beta \in \bbR, \ |\alpha|<\sqrt{d/2},\  \alpha^2+\beta^2\ge d  \}.
 \end{equation} 
We require an assumption on the regularity of the function $L$ present in \eqref{fourme} (a condition which is also present for papers investigating the subcritical complex chaos \cite{junnila2019,lacoin2020} for a similar reasons).
Let us recall the definition for the Sobolev space with index $s\in \bbR$ on $\bbR^k$ 
which is the Hilbert space of complex valued function associated with the norm 
\begin{equation}
 \| \varphi \|_{H^{s}(\bbR^k)}:= \left( \int_{\bbR^{k}} (1+|\xi|^2)^s |\hat \varphi(\xi)|^2 \dd \xi \right)^{1/2},
\end{equation}
where $\hat \varphi(\xi)$ is the Fourier transform of $\varphi$  defined for $\varphi\in C^{\infty}_c(\bbR^k)$ by
$$ \hat \varphi(\xi)= \int_{\bbR^{k}} e^{i\xi x} \varphi(x)\dd x.$$
Now for $U\subset \bbR^k$ open, the local Sobolev space $H^{s}_{\mathrm{loc}}(U)$ denotes the function which belongs to  $H^{s}(U)$ after multiplication by an arbitrary smooth function with compact support
\begin{equation}\label{locsob}
H^{s}_{\mathrm{loc}}(U):= \left\{  \varphi : U \to \bbR  \ | \ \rho\varphi\in
 H^{s}(\bbR^d) \text{ for all } \rho\in C^{\infty}_c(U)\right\},
\end{equation}
where above $\rho\varphi$ is identified with its extension by zero on $\bbR^k$.
We are going to assume that the covariance kernel $K$ is of the form \eqref{fourme} with $L\in H^{s}_{\mathrm{loc}}(\cD^2)$ for some exponent $s>d$.

\medskip
  Before stating the result we need to introduce some notation.
Let us define the function $\ell_{\theta}$  on $\bbR^d$, obtained by convoluting $z\mapsto \log 1/|z|$ twice with $\theta$, that is
\begin{equation}\label{ltheta}
 \ell_{\theta}(z):= \int_{\bbR^d} \log\left(\frac{1}{|z+z_1-z_2|}\right) \theta(z_1) \theta(z_2) \dd z_1 \dd z_2.
\end{equation}
and set

\begin{equation}    v(\gep,\theta,\gamma):=
 \begin{cases}
\gep^{\frac{|\gamma|^2-d}{2}} \left(\frac{1}{2}\int_{\bbR^d} e^{|\gamma|^2\ell_{\theta}(z)} \dd z\right)^{-1/2} \quad &\text{ if } |\gamma|>\sqrt{d},\\
    \left(\frac{\pi^{\frac{d}{2}}}{\gG(d/2)} \log \frac 1 \gep\right)^{-1/2} \quad  &\text{ if } |\gamma|=\sqrt{d},
 \end{cases}
 \end{equation}
(the quantity 
$\frac{\pi^{\frac{d}{2}}}{\gG(d/2)}$  corresponds to half of the volume of the $(d-1)$-dimensional sphere).
Our result establishes that for $\gamma\in \cP'_{\mathrm{III}}$
when $\gep$ goes to zero $v(\gep,\theta,\gamma) M^{(\gamma)}_{\gep}$ converges in distribution towards
a complex Gaussian white noise, with random  intensity is
$M^{(2\alpha)}_0\left(e^{|\gamma|^2L}  \cdot\right)$.

\begin{theorem}\label{mainall}
Let $X$ be a Gaussian random field on $\cD$ whose covariance kernel is of the form \eqref{fourme}
with $L\in H^{s}_{\mathrm{loc}}(\cD^2)$ for some $s>d$.
For $\gamma \in \cP'_{\mathrm{III}}$,  $u>d/2$,
 the distribution $v(\gep,\theta,\gamma) M^{(\gamma)}_{\gep}$  converges in law, for the $H^{-u}_{\mathrm{loc}}(\cD)$ topology, towards $\mathfrak M^{(\gamma)}$ defined in \eqref{cgwn}. 
More precisely we have the following joint convergence in law
\begin{equation}\label{stableconv}
 (X, v(\gep,\theta,\gamma) M^{(\gamma)}_{\gep})   \quad \stackrel{\gep \to 0}{ \Longrightarrow}
\quad  (X , \mathfrak M^{(\gamma)}).
 \end{equation}

\end{theorem}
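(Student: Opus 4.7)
My strategy is a conditional central limit theorem that extracts from $M^{(\gamma)}_{\gep}$ a Gaussian fluctuation at small scales, whose random intensity is dictated by the macroscopic part of $X$. Using the martingale/scale decomposition of Section~\ref{martingaledecompo}, write $X_{\gep}=Y_{\delta}+Z_{\gep,\delta}$ where $Y_{\delta}$ is a smooth approximation of $X$ corresponding to scales $\ge \delta$ and $Z_{\gep,\delta}$ is the independent Gaussian remainder with correlation range at most $\delta$. Conditionally on $Y_{\delta}$, the rescaled $v(\gep,\theta,\gamma)M^{(\gamma)}_{\gep}(f)$ decomposes as a sum of asymptotically independent complex contributions, one per $\delta$-cell, each centered thanks to the oscillating phase $e^{i\beta Z_{\gep,\delta}}$. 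A Lindeberg-type CLT then forces the conditional limit to be a centered complex Gaussian whose independent real and imaginary parts share variance $\int f^{2}e^{2\alpha Y_{\delta}(x)}e^{|\gamma|^{2}L(x,x)}\,dx$; sending $\delta\to 0$ converts $e^{2\alpha Y_{\delta}(x)}\,dx$ to $M^{(2\alpha)}_{0}(dx)$ by Theorem~\ref{zabbid} (or its variant Theorem~\ref{zabbid2}), identifying the intensity of $\mathfrak M^{(\gamma)}$ and delivering the joint convergence with $X$.

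The calibration of $v(\gep,\theta,\gamma)$ is fixed by the second moment $\bbE[|M^{(\gamma)}_{\gep}(f)|^{2}]=\int\!\!\int f(x)\bar f(y)\, e^{|\gamma|^{2}K_{\gep}(x,y)}\,dx\,dy$, which for $|\gamma|^{2}\ge d$ concentrates near the diagonal. The substitution $y=x+\gep z$ and the local expansion $K_{\gep}(x,y)\approx \log(1/\gep)+L(x,x)+\ell_{\theta}((y-x)/\gep)$ make this integral of order $\gep^{d-|\gamma|^{2}}\bigl(\int e^{|\gamma|^{2}\ell_{\theta}(z)}\,dz\bigr)\int f^{2}e^{|\gamma|^{2}L(x,x)}\,dx$, matching $v(\gep,\theta,\gamma)^{-2}\cdot 2\int f^{2}e^{|\gamma|^{2}L(x,x)}\,dx$ in the strictly supercritical case $|\gamma|>\sqrt d$. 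The critical case $|\gamma|=\sqrt d$ is precisely where $\int e^{|\gamma|^{2}\ell_{\theta}(z)}\,dz$ diverges logarithmically at infinity (since $e^{d\ell_{\theta}(z)}\sim |z|^{-d}$), whence the $(\log 1/\gep)^{-1/2}$ normalization. Plugging this same second-moment estimate into the Parseval expression for the $H^{-u}$ norm with $u>d/2$ gives tightness of $v(\gep,\theta,\gamma)M^{(\gamma)}_{\gep}$ in $H^{-u}_{\mathrm{loc}}(\cD)$ (here the regularity assumption $L\in H^{s}_{\mathrm{loc}}(\cD^{2})$ for $s>d$ is convenient to control how the limiting bilinear form depends on the test function), reducing the theorem to the convergence of finite-dimensional marginals.

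The main obstacle lies in executing the conditional CLT uniformly in $Y_{\delta}$: on a cube of side $\eta$ with $\gep\ll\eta\ll\delta$ centered at $x$, the weight $e^{\gamma Y_{\delta}(x)}$ is effectively constant but the upper tails of $e^{\alpha Y_{\delta}}$ make the local variance unbounded, so Lindeberg's condition has to be verified first on the high-probability event $\{\max_{x}Y_{\delta}(x)\le A\}$ and the truncation then removed by a direct second-moment bound on the excess mass. A secondary delicate point is to check that the real and imaginary parts of the limit have matching variances and asymptotically vanishing correlation, which is where the strict inequality $|\alpha|<\sqrt{d/2}$ (which together with $\alpha^{2}+\beta^{2}\ge d$ yields $\beta^{2}>\alpha^{2}$) plays its role: the factor $e^{2i\beta Z_{\gep,\delta}}$ averages to zero fast enough, via the identity $\bbE[(e^{\gamma Z_{\gep,\delta}-\gamma^{2}\sigma^{2}/2})^{2}]=e^{(\gamma^{2}-|\gamma|^{2})\sigma^{2}}\bbE[|e^{\gamma Z_{\gep,\delta}-\gamma^{2}\sigma^{2}/2}|^{2}]$ with $|e^{(\gamma^{2}-|\gamma|^{2})\sigma^{2}}|=e^{-2\beta^{2}\sigma^{2}}\to 0$, to force equipartition between the two components. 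The critical case $|\gamma|=\sqrt d$ requires an extra logarithmic refinement of the mean-cancellation estimate, but the structure of the argument is unchanged.
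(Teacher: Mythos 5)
Your overall architecture (tightness in $H^{-u}$ via second moments plus convergence of finite-dimensional marginals via a conditional CLT) matches the paper's reduction, and several of your calibrations are exactly the computations the paper performs: the second-moment origin of $v(\gep,\theta,\gamma)$, the logarithmic divergence of $\int e^{d\ell_\theta(z)}\dd z$ explaining the $(\log 1/\gep)^{-1/2}$ normalization at $|\gamma|=\sqrt d$, and the decay $|e^{(\gamma^2-|\gamma|^2)\sigma^2}|=e^{-2\beta^2\sigma^2}$ that kills the $\langle W,W\rangle$-type term and forces equipartition of real and imaginary parts. But your route to the fluctuation theorem is different: you condition on a coarse field $Y_\delta$ and apply a Lindeberg CLT to a sum over $\delta$-cells, whereas the paper conditions progressively along the scale filtration $(\cF_t)$ and uses a martingale CLT (Theorem \ref{TLC} and its adaptation in Section \ref{sec:geralth2}) whose only input is the convergence in probability of the quadratic variation $v(\gep)^2\langle N^{(\gep)}\rangle_\infty\to M^{(2\alpha)}_0(e^{|\gamma|^2L}f^2)$, computed by Itô calculus; the martingale route also yields the joint (stable) convergence with $X$ essentially for free. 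You also do not address the reduction from a general kernel with $L\in H^s_{\mathrm{loc}}(\cD^2)$ to one admitting a star-scale decomposition (Lemma \ref{lesasump}), which is what makes any scale decomposition $X_\gep=Y_\delta+Z_{\gep,\delta}$ with independent, short-range $Z_{\gep,\delta}$ available in the first place.

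The genuine gap is in your treatment of the Lindeberg condition. You locate the obstacle in the upper tails of $e^{\alpha Y_\delta}$ and propose to truncate on $\{\max_x Y_\delta(x)\le A\}$, but for $|\alpha|\in[\sqrt d/2,\sqrt{d/2})$ the obstruction comes from the \emph{fine-scale} field $Z_{\gep,\delta}$, which that truncation leaves untouched. Concretely, even conditionally on a bounded $Y_\delta$, the fourth moment of a single cell contribution contains the configuration where all four points lie within distance $\gep$ of each other, contributing a factor of order $\gep^{3d-6\alpha^2-2\beta^2}$ against the normalization $v(\gep)^{-4}\asymp\gep^{2d-2|\gamma|^2}$, i.e.\ a ratio $\gep^{d-4\alpha^2}\to\infty$ when $4\alpha^2>d$; relatedly, the limiting intensity $M^{(2\alpha)}_0(f^2)$ has infinite second moment in this regime, so no moment bound of order $2+\epsilon$ can close the Lindeberg condition, and "a direct second-moment bound" on the truncation error is likewise unavailable where second moments of the relevant quantities diverge. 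This is precisely why the paper's Step 2 in the proof of Proposition \ref{lealeb} introduces the additional truncation event $\cA_t(x)=\{X_u(x)\le\gl u\}$ at a scale $u=t-\log t$ very close to the finest scale, with $\gl$ tuned as in \eqref{lambdabal}; some analogue of this fine-scale truncation, not merely a cutoff on $Y_\delta$, is indispensable for your block CLT as well, and without it your plan fails on the harder half of the parameter range.
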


The convergence \eqref{stableconv} indicates that the limit of $\mathfrak M^{(\gamma)}$ informally splits into two parts: the intensity of the noise which is determined by the realization of the field $X$ and an additional Gaussian randomness which is independent from $X$. In particular \eqref{stableconv} implies that $v(\gep,\theta,\gamma) M^{(\gamma)}_{\gep}$ \textit{does not} converge to a limit in probability.
It is a particular case of \textit{stable convergence} (see \cite[VIII-Section 5c]{jacodsh}).

To prove Theorem \ref{mainall} we prove separetely the tightness of $v(\gep,\theta,\gamma) M^{(\gamma)}_{\gep}$  in   $H^{-u}_{\mathrm{loc}}(\cD)$ if $u>d/2$ and the convergence of the finite dimensional marginals.  
The proof of the tightness result below, while a bit technical, follows a standard approach and for this reason is given in  Appendix \ref{tightness}.

\begin{proposition}\label{tiggh}
Under the assumptions of Theorem \ref{mainall}, given  $\rho\in C^{\infty}_c(\cD)$. 
The random sequence $(v(\gep,\theta,\gamma) M^{(\gamma)}_{\gep}(\rho  \ \cdot))_{\gep\in(0,1)}$  is tight in 
 $H^{-u}(\bbR^d)$ for any $u>d/2$. 
\end{proposition}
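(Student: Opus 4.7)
The plan is to establish a uniform second moment bound for $v(\gep,\theta,\gamma)\rho\, M^{(\gamma)}_{\gep}$ in a Sobolev space $H^{-u'}$ with $d/2 < u' < u$, and then conclude by the compactness of the Rellich embedding of compactly supported $H^{-u'}$ distributions into $H^{-u}(\bbR^d)$. Since $\rho M^{(\gamma)}_\gep$ is automatically supported in the fixed compact set $\Supp\rho$, Markov's inequality combined with this compact embedding reduces tightness in $H^{-u}(\bbR^d)$ to the bound
$$ \sup_{\gep\in(0,1)}\, v(\gep,\theta,\gamma)^2\, \bbE\!\left[\|\rho\, M^{(\gamma)}_{\gep}\|_{H^{-u'}(\bbR^d)}^2\right] < \infty. $$

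The key computation is the second moment of $M^{(\gamma)}_\gep(f)$ for a general test function $f$. Using that $X_\gep$ is a real Gaussian field, the complex exponential moment formula directly yields
$$ \bbE\!\left[|M^{(\gamma)}_{\gep}(f)|^2\right] = \int_{\cD_\gep^2} f(x)\overline{f(y)}\, e^{|\gamma|^2 K_{\gep}(x,y)}\, \dd x\, \dd y. $$
Using $|f(x)\overline{f(y)}|\leq (|f(x)|^2 + |f(y)|^2)/2$, the continuity of $L$ on compact sets, and the standard asymptotic $K_\gep(x,y) = -\log(|x-y|\vee \gep) + L(x,y) + o(1)$, this is dominated by $C_K\|f\|_{L^2}^2 \sup_{x\in K}\int_K (|x-y|\vee \gep)^{-|\gamma|^2}\dd y$, for $K$ a compact set containing the supports. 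The change of variables $z=(y-x)/\gep$ shows that when $|\gamma|^2>d$ the supremum behaves to leading order like $\gep^{d-|\gamma|^2}\int_{\bbR^d}e^{|\gamma|^2\ell_\theta(z)}\dd z$, with $\ell_\theta$ defined in \eqref{ltheta}, while when $|\gamma|^2=d$ it behaves like $\tfrac{2\pi^{d/2}}{\gG(d/2)}\log(1/\gep)$. In either case this precisely cancels $v(\gep,\theta,\gamma)^{-2}$, giving
$$ v(\gep,\theta,\gamma)^2\, \bbE\!\left[|M^{(\gamma)}_{\gep}(f)|^2\right] \leq C_K\, \|f\|_{L^2}^2 $$
uniformly in $\gep$. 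Applying this to $f = \rho\, e^{-i\xi\cdot}$ and observing that $\widehat{\rho M^{(\gamma)}_\gep}(\xi) = M^{(\gamma)}_\gep(\rho\, e^{-i\xi\cdot})$ yields a uniform bound on $v(\gep,\theta,\gamma)^2\,\bbE[|\widehat{\rho M^{(\gamma)}_\gep}(\xi)|^2]$ independent of $\xi$; integrating against the Sobolev weight $(1+|\xi|^2)^{-u'}$ then gives the desired second-moment bound for every $u'>d/2$.

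The main obstacle is the precise asymptotic analysis of the near-diagonal integral in step two: one must show that the leading behaviour of $\int_K e^{|\gamma|^2 K_\gep(x,y)}\dd y$ matches $v(\gep,\theta,\gamma)^{-2}$ up to a multiplicative constant depending only on $K$ and $L$, so that the rescaling absorbs both the $\gep$-power and the $\theta$-dependent integral $\int e^{|\gamma|^2\ell_\theta(z)}\dd z$. The $H^{s}_{\mathrm{loc}}$ regularity of $L$ for $s>d$, and in particular its continuity on compact subsets, provides more than enough control to run the change-of-variables argument and to absorb the non-singular part of $K_\gep$ into the constant. Once this bound is in hand, tightness follows from the standard argument: $v(\gep,\theta,\gamma)\rho M^{(\gamma)}_\gep$ lies in a bounded $H^{-u'}$ ball of distributions supported in $\Supp\rho$ with arbitrarily high probability, and such balls are relatively compact in $H^{-u}(\bbR^d)$ for any $u>u'>d/2$.
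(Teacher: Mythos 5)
Your proposal is correct, and its first half is exactly the paper's core estimate: the second moment identity $\bbE[|M^{(\gamma)}_{\gep}(f)|^2]=\int f(x)\overline{f(y)}e^{|\gamma|^2K_{\gep}(x,y)}\dd x\dd y$, the bound $e^{|\gamma|^2K_\gep(x,y)}\le C(|x-y|\vee\gep)^{-|\gamma|^2}$ from \eqref{estimao}, and the application to $f=\rho\,e^{i\xi\cdot}$ to get $v(\gep)^2\,\bbE[|\widehat{\rho M^{(\gamma)}_\gep}(\xi)|^2]\le C(\rho)$ uniformly in $\xi$ and $\gep$ (note that only an upper bound of order $v(\gep)^{-2}$ is needed here, not the sharp asymptotic with the $\ell_\theta$ constant that you flag as the ``main obstacle''; that precision matters for Theorem \ref{main}, not for tightness). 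Where you diverge is in how compactness is extracted. The paper works in $\hat H^{-u}=L^2((1+|\xi|^2)^{-u}\dd\xi)$ and verifies a Fr\'echet--Kolmogorov criterion directly: condition (i) comes from the uniform $\xi$-bound integrated against a weight $(1+|\xi|^2)^{-u'}$ with $d/2<u'<u$, but condition (ii) requires a second estimate, $\bbE[v(\gep)^2|\hat M_\gep(\xi+a)-\hat M_\gep(\xi)|^2]\le C|a|^2$, followed by a dyadic chaining argument over translations $b_{k,i}=2^{-k}\mathbf{e}_i$ to upgrade pointwise increment control to a uniform modulus of continuity on a high-probability event. You replace all of that by the observation that $\rho M^{(\gamma)}_\gep$ is supported in the fixed compact $\Supp\rho$, so that bounded balls of $H^{-u'}$-distributions with that support are compact in $H^{-u}(\bbR^d)$ for $u>u'$, and Markov's inequality finishes. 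This is a legitimate shortcut: the Rellich-type compactness of $H^{-u'}_{K}\hookrightarrow H^{-u}(\bbR^d)$ for $K$ compact is standard (and is, under the hood, exactly the equicontinuity of Fourier transforms of compactly supported distributions that the paper proves by hand), while the paper's route is more self-contained and makes the translation estimate explicit. Your argument is complete modulo citing that embedding theorem; the one statement I would soften is the claimed expansion $K_\gep(x,y)=-\log(|x-y|\vee\gep)+L(x,y)+o(1)$, which is not what \eqref{estimao} gives uniformly near the diagonal --- but only the two-sided $O(1)$ bound is used, so nothing breaks.
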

 To prove the convergence of finite dimensional marginals of  
$(X, v(\gep,\theta,\gamma) M^{(\gamma)}_{\gep})$, using Lévy's Theorem, it is sufficient to prove the pointwise convergence of the Fourier  transform of real the real valued random vectors of the type
 $$\left( (\langle\mu_j,X\rangle)_{j=1}^m, \big(v(\gep,\theta,\gamma)\mathfrak{Re}(M^{(\gamma)}_{\gep}(f_k)\big)_{k=1}^n, \big(v(\gep,\theta,\gamma)\mathfrak{Im}(M^{(\gamma)}_{\gep}(f_k)\big)_{k=1}^n
 \right)
 $$
 for $\mu_1,\dots,\mu_m \in \cM_K$, $f_1,\dots,f_n\in C^{\infty}_c(\cD)$,
  where here and in the remainder of the paper $\mathfrak{Re}$ and $\mathfrak{Im}$ are used to denote the real part of a complex number.
 This amounts to checking the convergence of 
$
 \bbE \left[e^{i \langle X,\mu\rangle + i v(\gep,\theta,\gamma)M^{(\gamma)}_{\gep}(f,\go)} \right]$
 for every $\mu\in \cM_K$, $f\in C^{\infty}_c(\cD)$ and $\go\in [0,2\pi)$, where
 \begin{equation}\begin{split}
 M^{(\gamma)}_{\gep}(f,\go)&:= \mathfrak{Re}\left( e^{-i\go}  M^{(\gamma)}_{\gep}(f)\right)\\&=
 \int_{\cD_{\gep}} f(x)e^{\alpha X_{\gep}(x)+\frac{\beta^2-\alpha^2}{2}K_{\gep}(x)} \cos(\beta(X_{\gep}(x)-2\alpha\beta K_{\gep}(x)-\go))
 \dd x.
\end{split}
 \end{equation} 
We let $\cF_X$ denote the $\sigma$-algebra generated by the process $X$
\begin{equation}\label{cfx}
 \cF_X:= \sigma( \langle X , \mu \rangle , \mu\in \cM_K ).
\end{equation}
Note that from the definition  \eqref{cgwn} we have for every $f\in C^{\infty}_c(\cD)$ and $\go\in[0,2\pi)$
\begin{equation}\label{ftrans}
\bar \bbE \left[ e^{i \mathfrak{Re}(e^{-i\go}\mathfrak M^{(\gamma)}(f)}) \ | \ \cF_X \right]= e^{- \frac{1}{2}M^{(2\alpha)}_{0}(e^{|\gamma|^2 L}f^2)} .
\end{equation}
The following result is the main technical achievement of the paper.

\begin{theorem}\label{main}
Under the assumption of Theorem \ref{mainall}, given $f\in C^{\infty}_c(\cD)$, $\go\in[0,2\pi)$, and $\mu \in \cM_K$  we have
\begin{equation}\begin{split}
 \lim_{\gep \to 0} \bbE \left[e^{i \langle X,\mu\rangle + i v(\gep,\theta,\gamma)M^{(\gamma)}_{\gep}(f,\go)} \right]&=
 \bar \bbE\left[ e^{i \langle X,\mu\rangle +  i \mathfrak{Re}(e^{-i\go} \mathfrak M^{(\gamma)}(f)} )\right]\\
 &=
 \bbE \left[ e^{i \langle X,\mu\rangle -\frac{1}{2}M^{(2\alpha)}_{0}(e^{|\gamma|^2L}f^2)} \right].
\end{split}
 \end{equation}
 
\end{theorem}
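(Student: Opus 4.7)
The plan is to prove Theorem \ref{main} via a conditional central limit theorem: after rescaling, $v(\gep,\theta,\gamma)M^{(\gamma)}_\gep(f,\omega)$ should decompose as a sum of many weakly dependent small contributions which, conditionally on a coarse-scale approximation of $X$, are asymptotically centered Gaussian with variance $\tfrac12 M^{(2\alpha)}_0(e^{|\gamma|^2 L}f^2)$. The underlying heuristic is that the phase $\beta X_\gep$, whose variance grows like $\beta^2\log(1/\gep)$, oscillates very rapidly on any fixed scale, so the cosine self-cancels, and $v(\gep,\theta,\gamma)$ is precisely calibrated so that the residual fluctuation is of order one.

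First I would invoke the martingale decomposition described in Section \ref{martingaledecompo} to write $X=\underline X_{r}+\bar X_{r}$ as an independent sum, with $\underline X_{r}$ a smooth field whose covariance resembles $K$ truncated above scale $r$ and $\bar X_{r}$ a field with correlations supported at distances $\lesssim r$. Choose an intermediate scale $r=r(\gep)$ with $\gep\ll r\ll 1$ and partition the support of $f$ into cubes $(Q_i)$ of side $r$, writing $v(\gep,\theta,\gamma)M^{(\gamma)}_\gep(f,\omega)=v(\gep,\theta,\gamma)\sum_i I_i$ where $I_i$ is the integral of $f(x)e^{\alpha X_\gep(x)+\frac{\beta^2-\alpha^2}{2}K_\gep(x)}\cos(\beta(X_\gep(x)-2\alpha\beta K_\gep(x))-\omega)$ on $Q_i\cap\cD_\gep$. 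Because $\bar X_r$ is essentially local at range $r$, conditionally on $\underline X_r$ the cells $I_i$ are approximately independent and centered (the cosine averages out the $\bar X_{r,\gep}$ randomness on each cell).

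Next, compute the conditional variance. Using $2\cos(a)\cos(b)=\cos(a-b)+\cos(a+b)$, the $\cos(a+b)$ contribution yields a factor $\bbE[e^{i\beta(X_\gep(x)+X_\gep(y))}]=\exp(-\tfrac{\beta^2}{2}(K_\gep(x)+K_\gep(y)+2K_\gep(x,y)))$ that vanishes as $\gep\to 0$ and is negligible, while the $\cos(a-b)$ piece produces
\[
\tfrac{1}{2}v(\gep,\theta,\gamma)^2\sum_{i} e^{2\alpha \underline X_r(x_i)}\int_{Q_i^2}f(x)f(y)\,e^{|\gamma|^2 K_\gep(x,y)}\dd x\dd y + o(1).
\]
The normalization $v(\gep,\theta,\gamma)$ is tuned so that $v(\gep,\theta,\gamma)^2\int_{|y-x|\leq r} e^{|\gamma|^2 K_\gep(x,y)}\dd y\to e^{|\gamma|^2 L(x,x)}$: in the case $|\gamma|>\sqrt d$ this follows from the change of variable $y=x+\gep z$ and the definition \eqref{ltheta} of $\ell_\theta$, while in the boundary case $|\gamma|=\sqrt d$ it comes from integrating $|y-x|^{-d}$ against the spherical measure to produce the $\log(1/\gep)$ factor. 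Letting $r\to 0$ and applying Theorem \ref{zabbid} with parameter $2\alpha$ (subcritical because $|\alpha|<\sqrt{d/2}<\sqrt{2d}$) then identifies the conditional variance limit as $\tfrac12 M^{(2\alpha)}_0(e^{|\gamma|^2L}f^2)$, and together with \eqref{ftrans} this delivers the desired Fourier transform.

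The principal obstacle is to justify the Gaussian CLT despite the divergent $L^2$ norm of $M^{(\gamma)}_\gep(f)$ in phase III. One must truncate the coarse field on the atypical event $\{\max_{x\in\Supp(f)}\underline X_r(x)>A\}$ and verify a Lindeberg-type condition on the complement; the condition $|\alpha|<\sqrt{d/2}$ appearing in the definition of $\cP'_{\mathrm{III}}$ enters precisely here, as it ensures that the square of the real part, which corresponds to a real GMC of parameter $2\alpha$, lies in the subcritical regime ($2|\alpha|<\sqrt{2d}$), giving the second-moment bounds required to dominate the truncation error and to pass to the conditional expectation in the Fourier transform. Carrying out this truncation cleanly across the coupled fields $(\underline X_r,\bar X_r,X_\gep)$, and controlling the residual errors from replacing $e^{\alpha X_\gep}$ by $e^{\alpha(\underline X_r+\bar X_{r,\gep})}$ in the non-oscillating factor, is where the Sobolev regularity assumption $L\in H^s_{\mathrm{loc}}$ with $s>d$ is used.
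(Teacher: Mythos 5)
Your heuristic (rapid phase decorrelation, conditional variance governed by a real GMC of parameter $2\alpha$, normalization $v(\gep,\theta,\gamma)$ calibrated through $\ell_\theta$) matches the paper, and your variance computation via $\cos A\cos B=\tfrac12[\cos(A-B)+\cos(A+B)]$ is exactly the split the paper performs between $\langle W,\bar W\rangle$ and $\langle W,W\rangle$. But there is a gap at the very first step: the decomposition $X=\underline X_r+\bar X_r$ with $\bar X_r$ independent of $\underline X_r$ and having correlations supported at distance $\lesssim r$ is \emph{not} available under the hypotheses of Theorem \ref{mainall}. It requires the covariance to have a star-scale invariant part as in \eqref{ilaunestar}, and a general $K=L+\log\frac{1}{|x-y|}$ with $L\in H^s_{\mathrm{loc}}$ admits no such decomposition. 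The paper spends Lemma \ref{lesasump} constructing an approximating kernel $K^{(\delta)}=K+\Delta^{(\delta)}$ (with $\Delta^{(\delta)}$ positive definite and uniformly small) that \emph{does} have this structure, proves the CLT for the approximating field (Proposition \ref{withypo}), and then transfers the conclusion back to $X$ by $L^1$/$L^2$ comparison of the characteristic functions. Your proposal omits this reduction entirely, and consequently misplaces the role of the Sobolev assumption: $s>d$ is used to build $K^{(\delta)}$ (via Fourier-side positivity estimates in Appendix \ref{approxapp}), not to control the replacement of $e^{\alpha X_\gep}$ by $e^{\alpha(\underline X_r+\bar X_{r,\gep})}$.

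A second, smaller gap is the CLT mechanism itself. Your cubes $(Q_i)$ of side $r$ are not conditionally independent given $\underline X_r$: the fine field has correlation range of order $r+2\gep$, so adjacent cells are correlated and a Lindeberg theorem for independent arrays does not apply as stated; you would need an $m$-dependent (or mixing) CLT plus control of the boundary layers between cubes. The paper sidesteps this by keeping the scale decomposition continuous: $N^{(\gep)}_t=\bbE[M^{(\gamma)}_\gep(f,\go)\,|\,\cF_t]$ is a continuous martingale, the bracket $\langle N^{(\gep)}\rangle_\infty$ is computed exactly by It\^o calculus (Proposition \ref{quadraz2}), and the Gaussian limit with random variance follows from the exponential-martingale/stopping-time argument of Section \ref{sec:geralth2} (a variant of Theorem \ref{TLC}), which also delivers the joint convergence with $e^{i\langle X,\mu\rangle}$ for free by taking $Y$ to be $\cF_\infty$-measurable. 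Your truncation on $\{\max\underline X_r>A\}$ for $|\alpha|\ge\sqrt{d}/2$ is indeed the right idea and mirrors the event $\cA_t(x)=\{X_u(x)\le\gl u\}$ in the paper, but the block-independence and the kernel-approximation steps must be supplied before the argument closes.
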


\begin{rem}
Note that as in Theorem \ref{zabbid} or its extension to the complex case \cite[Theorem 2.1]{lacoin2020} the obtained limit does not depend on the convolution kernel used to define $X_{\gep}$ (the scaling factor $v(\gep,\theta,\gamma)$ does, although not when $|\gamma|^=\sqrt{d}$).
\end{rem}

\subsection{A review of results on complex GMC}

Let us now try to give some perspective on our results by relating it to the existing literature on complex GMC. The set $\cP'_{\mathrm{III}}$ in \eqref{pIIIprime} corresponds in fact - up to boundary - to one of the phases in a diagram which contains three. Let us introduce these three phases for the needs of the discussion
\begin{equation}\begin{split}
            \label{phasediagg}
 \cP_{\mathrm{sub}}&:=\left\{\alpha+i \beta \ : \alpha^2+\beta^2<d\right\} \cup\left\{  \alpha+i \beta  \ : \ \alpha \in (\sqrt{d/2},\sqrt{2d}) \ ; \  |\alpha|+|\beta|<\sqrt{2d} \ \right\},\\
  \cP_{\mathrm{II}}&:= \left\{\alpha+i \beta \ : |\alpha|+|\beta|>\sqrt{2d} \ ; \  |\alpha|> \sqrt{d/2} \right\},\\
 \cP_{\mathrm{III}}&:= \left\{\alpha+i \beta \ : \alpha^2+\beta^2> d \ ; \ |\alpha| < \sqrt{d/2} \right\}.
                \end{split}
\end{equation}
This phase diagram appears in 
in \cite{LRV15} in the context of the study of complex GMC is defined by considering
 \begin{equation}\label{defmab}
 M^{(\alpha,\beta)}_{\gep}(\dd x):= e^{\alpha X_{\gep}+ i\beta Y_{\gep}- \frac{\alpha^2-\beta^2}{2}K_{\gep}(x)} \dd x
 \end{equation}
 where $X$ and $Y$ are independent $\log$-correlated fields of covariance $K$ \footnote{To be completely accurate, the field $X_{\gep}$ considered  in \cite{LRV15}  is not a convolution but rather a martingale approximation of the type considered in  Section \ref{martingaledecompo}. This difference is not   relevant for  the present  discussion.}, and also for related models such as 
complex Multiplicative Cascades \cite{DES93} complex Random Energy Model \cite{KK14} or complex branching Brownian Motion \cite{HK15, HK18}.
Each of the phases in \eqref{phasediagg} is conjectured to correspond to a different scaling regime for $M^{(\gamma)}_{\gep}$

\subsubsection*{The subcritical phase $\cP_{\mathrm{sub}}$}
When $\gamma\in \cP_{\mathrm{sub}}$ is has been proved that  $M^{(\gamma)}_{\gep}$ converges 
to a random distribution.
More precisely is has been proved in \cite{junnila2019} (with the assumption $L\in H^s_{\mathrm{loc}}(\cD^2)$, for $s>d$) that the random distribution  $M^{(\alpha)}_{0}$ of Theorem \ref{zabbid} has a unique analytic continuation on the domain  $\cP_{\mathrm{sub}}$.
In \cite{lacoin2020}, it was proved (under the same assumption) that  this analytic continuation is the limit of  $M^{(\gamma)}_\gep$ for any choice of convolution kernel $\theta$ , extending Theorem \ref{zabbid} to the full region  $\cP_{\mathrm{sub}}$.
Convergence of  $M^{(\alpha,\beta)}_{\gep}$, recall \eqref{defmab}, for  $(\alpha,\beta)\in \cP_{\mathrm{sub}}$ (identified with a subset of $\bbR^2$) is also established in \cite{lacoin2020} (and earlier in \cite{LRV15} for the martingale approximation).

\subsubsection*{The glassy phase $\cP_{\mathrm{II}}$}

When $\gamma \in \cP_{\mathrm {II}}$, it is conjectured that  in the limit when $\gep\to 0$ the distribution of $M^{(\gamma)}_\gep$ is supported by small neighborhood of the  points where  $X_{\gep}$ is close to be maximized, yielding an atomic distribution (a countable sum of weighted Dirac masses) in the limit.
In this case the right  scaling should be $(\log 1/\gep)^{\frac{3\alpha}{2}}\gep^{\sqrt{2d}\alpha-d} M^{(\gamma)}_\gep(\dd x)$.
This phenomenon is called freezing and has been proved in \cite{HK15, madaule2016} for the complex exponential of Branching Brownian Motion, but it remains a challenging conjecture for complex GMC
(both for $M^{(\alpha,\beta)}_{\gep}$ and $M^{(\gamma)}_{\gep}$).

\subsubsection*{The third phase $\cP_{\mathrm{III}}$}
When $\gamma\in \cP_{\mathrm{III}}$ the fluctuations of $X_{\gep}$  makes 
the phases of $e^{i\beta X_{\gep}(x)}$ decorelate even on small scale, and this accounts for the appearance of a white noise appear in the limit.  The intensity of the corresponding white noise has to be given by the limit the square of the modulus of the local variations, that is $e^{2\gamma X_{\gep}} \dd x$.
The convergence of $\gep^{|\gamma|^2-d} M^{(\alpha,\beta)}_\gep$ (recall \eqref{defmab}) was established in \cite{LRV15}. The proof relied on the computation of all the conditional moments  of $M^{(\alpha,\beta)}_\gep$ when conditioning w.r.t.\ to the field $X$.  This approach is heavily relying on the independence of $X$ and $Y$ and cannot be adapted to the present context.

\medskip

In this paper, partly inspired the techniques used in \cite{LRV19} to study the Sine-Gordon model (which corresponds to the case $\alpha=0$) we take a completely different approach which relies on convergence of martingale brackets after using a martingale decomposition.

\subsection{Open questions}

Note that our result Theorem \ref{mainall} does not only compute the scaling limit of $M^{(\gamma)}_{\gep}$ in $\cP_{\mathrm{III}}$ but also on the frontier between $\cP_{\mathrm{III}}$ and $\cP_{\mathrm{sub}}$ (recall the definition of $\cP'_{\mathrm{III}}$   \eqref{pIIIprime}).
Let us discuss here shortly what should occur on the rest of the frontier between $\cP_{\mathrm{III}}$ and other phases.

\medskip

To formulate this conjecture, let us introduce the critical real multiplicative chaos, which corresponds to the point $|\alpha|= \sqrt{2d}$ and $\beta=0$.
It has been proved \cite{MR3262492,junnila2017ejp} that while  $M^{(\pm \sqrt{2d})}_\gep(\dd x)$
converges to zero, we obtain a non trivial limit in probability after rescaling by an appropriate factor. More precisely we have
$$\lim_{\gep\to 0}(\log 1/\gep)^{1/2} M^{(\pm \sqrt{2d})}_\gep(\dd x)=:M^{(\pm \sqrt{2d})} (\dd x). $$
The measure $M^{(\pm \sqrt{2d})}$ is referred to
 to as the \textit{critical} multiplicative chaos.

\subsubsection*{The frontier $\cP_{\mathrm {II}}/\cP_{\mathrm {III}}$}
When $|\alpha|=\sqrt{d/2}$, $|\beta|>\sqrt{d/2}$, it is natural 
to conjecture that $M^{(\gamma)}_\gep$ properly renormalized should converges to a white noise whose intensity is given by $M^{(2\alpha)}$.
Such a result has been proved in \cite{LRV15} for the chaos given in \eqref{defmab}.
The proper renormalization to consider in that case should be $(\log 1/\gep)^{\frac{1}{4}}\gep^{\frac{|\gamma|^2-d}{2}} M^{(\gamma)}_\gep$.

\subsubsection*{The triple point $|\alpha|=|\beta|=\sqrt{d/2}$}
This should be similar to the $\cP_{\mathrm {II}}/\cP_{\mathrm {III}}$ frontier though possibly more technical to handle.
In that case $(\log 1/\gep)^{-\frac{1}{4}}M^{(\gamma)}_\gep$ should converge to a complex white noise with intensity given by $M^{(2\alpha)}$.

\subsection*{First hints on the organization of the paper}

 Proposition \ref{tiggh} is proved in Appendix \ref{tightness}.
 The proof of Theorem \ref{main} which is the main technical achievement of the paper rely on a martingale decomposition of $M^{(\gamma)}_{\gep}$ which   is introduced in Section \ref{martingaledecompo}. Additional details on this decomposition are needed to describe the remainder of the organization of the paper,  a more detailed picture is given in Section\ref{lorga}.

\subsection*{Notation}
Let us list here a few convention adopted in the paper.
If $G$ is a generic function of two variables we will write $G(x)$ for $G(x,x)$.
If $(J_s)_{s\ge 0}$ is a continuous function or a random process indexed by $s$ we set 
\begin{equation}\label{intervalnotation}
 J_{[a,b]}:=J_b-J_a
\end{equation}
The letter $C$ is used to denote generic positive constants used in the computation. The value of $C$ is allowed to change from one equation to another  within the same proof.
 
\section{Decompositing the proof of Theorem \ref{main}}\label{martingaledecompo}

 \subsection{Star-scale invariant kernels}

We are going to say that the kernel $K$ has \textit{a star-scale invariant part}  (with kernel $\kappa$) if it can be written in the form
\begin{equation}\label{ilaunestar}
 K(x,y)=K_0(x,y)+ \int^{\infty}_{0} \kappa(e^{t}|x-y|)\dd t
\end{equation}
where $K_0(x,y)$ is a bounded H\"older continuous positive definite kernel on $\cD$ (recall \ref{ladefpos}) and the function $\kappa: \bbR_+ \to \bbR$, satisfies the following assumptions:
\begin{itemize}
 \item [(i)] $\kappa$ is Lipshitz-continuous and non-negative,
 \item [(ii)] $\kappa(0)=1$, $\kappa(r)=0$ for $r\ge 1$.
 \item [(iii)]$(x,y)\mapsto \kappa(|x-y|)$ defines a positive definite function on $\bbR^d\times \bbR^d$. 
\end{itemize}
\noindent Note that if $K$ satisfies  \eqref{ilaunestar} then
\begin{equation}
 L(x,y):= K(x,y)+\log |x-y|,
\end{equation}
can be extended to a continuous function on $\cD^{2}$, so that $K$  having a star-scale invariant part
implies that it can be written in the form \eqref{fourme}.

\medskip

The converse is not true and there are  positive definite kernel $K$ of the type given in Equation \eqref{fourme} that cannot be decomposed as in \eqref{ilaunestar} for any choice of $\kappa$.
However if $L\in H^s_{\mathrm{loc}}(\cD^2)$ for some $s>d$, then  $K$ can be approximated very well by a kernel of the type \eqref{ilaunestar}.
This is the content of the following result  (proved in Appendix \ref{approxapp}).

\begin{lemma}\label{lesasump}
 Given $K$ a covariance kernel on $\cD$ of the form \eqref{fourme} with $L\in H^s_{\mathrm{loc}}(\cD^2)$ for $s>d$
 , $\cD'$ a bounded open set whose closure satisfies   $\bar \cD'\subset \cD$ and $\delta>0$
  there exists a kernel $K^{(\delta)}$  of the form \eqref{ilaunestar} on $\cD'$ such that 
 \begin{itemize}
  \item [(A)]  For all $x,y \in \cD', \quad   |K^{\delta}(x,y)-K(x,y)|\le \delta$.
\item [(B)] $\Delta^{(\delta)}(x,y)=K^{(\delta)}(x,y)-K(x,y)$ is a positive definite kernel on $\cD'$.
 \end{itemize}

\end{lemma}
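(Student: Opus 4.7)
My plan is to construct $K^{(\delta)}$ explicitly in the form $K_0^{(\delta)}(x,y)+\int_0^\infty\kappa(e^t|x-y|)\,dt$, with $\kappa$ fixed once and for all independently of $\delta$, and $K_0^{(\delta)}$ tuned to ensure both conditions. First, by rescaling $x\mapsto\lambda x$ --- which only modifies $L$ by the additive constant $\log\lambda$ and leaves the positive-definiteness framework untouched --- I reduce WLOG to $\mathrm{diam}(\cD')<1$. Then I fix $\kappa$ satisfying (i)--(iii); a convenient choice is $\kappa(r):=c_d(\psi*\psi)(re_1)$ for $\psi$ a smooth radial nonnegative bump supported in $B(0,1/2)$, normalized so $\kappa(0)=1$. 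This $\kappa$ is Lipschitz and compactly supported in $[0,1]$, and $\kappa(|\cdot|)$ is positive definite by Bochner's theorem since $\widehat{\kappa(|\cdot|)}\propto|\hat\psi|^2\ge 0$. A direct computation with the change of variables $u=e^tr$ gives $S(r):=\int_0^\infty\kappa(e^tr)\,dt=\log(1/r)+h(r)$ for $r\in[0,1)$, where $h(r):=\int_r^1(\kappa(u)-1)\,du/u$ is bounded and Lipschitz on $[0,1]$ (using $\kappa(0)=1$ and Lipschitz continuity to control $(\kappa(u)-1)/u$ near $0$).

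With this setup, writing $K^{(\delta)}=K_0^{(\delta)}+S(|\cdot|)$, the log singularities of $K$ and $S(|\cdot|)$ cancel exactly, and the lemma is equivalent to finding $K_0^{(\delta)}$ Hölder continuous and positive definite on $\cD'$ such that $\Delta^{(\delta)}:=K_0^{(\delta)}+h(|\cdot|)-L$ is itself positive definite and satisfies $\|\Delta^{(\delta)}\|_{L^\infty(\cD'^2)}\le\delta$. The natural ansatz is $K_0^{(\delta)}:=L-h(|\cdot|)+\Xi^{(\delta)}$ where $\Xi^{(\delta)}$ is a Hölder positive-definite correction of sup norm at most $\delta$, chosen so that $K_0^{(\delta)}$ is itself positive definite. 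Then $\Delta^{(\delta)}=\Xi^{(\delta)}$ automatically satisfies (A) and (B), and the only remaining condition to enforce is the positive-definiteness of $K_0^{(\delta)}$.

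The construction of $\Xi^{(\delta)}$ is where the Sobolev regularity $L\in H^s_{\mathrm{loc}}(\cD^2)$ with $s>d$ enters. Multiplying by a cutoff $\chi(x)\chi(y)$ with $\chi\in C^\infty_c(\cD)$ equal to $1$ on a neighborhood of $\bar\cD'$, I obtain $\tilde L:=\chi\otimes\chi\cdot L\in H^s(\bbR^{2d})$; the Sobolev embedding ($s>d=\dim\bbR^{2d}/2$) yields $\tilde L$ bounded and Hölder continuous, and the symmetric kernel $\tilde L$ defines a Hilbert-Schmidt self-adjoint operator on $L^2(\bbR^d)$ with a spectral decomposition $\tilde L(x,y)=\sum_k\mu_k\phi_k(x)\phi_k(y)$ (convergence at least in $L^2$-sense) where $\{\phi_k\}$ is an orthonormal system of continuous functions. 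I would then build $\Xi^{(\delta)}$ as a smooth finite-rank positive-definite kernel assembled from the $\phi_k$ with suitably small coefficients, tailored to dominate the negative spectrum of the operator $T_{L-h(|\cdot|)}$ while keeping $\|\Xi^{(\delta)}\|_\infty\le\delta$.

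The main obstacle --- the technical heart of the argument --- is this dual positive-definiteness requirement under the sup-norm constraint. The Schur test bounds $\|T_{\Xi^{(\delta)}}\|_{\mathrm{op}}\le\delta|\cD'|$, so a sup-norm-$\delta$ correction can compensate only an operator-norm defect of order $\delta$ in $T_{L-h(|\cdot|)}$. This is where the positive-definiteness of $K=L+\log(1/|\cdot|)$ is used quantitatively: since $T_K$ and $T_{S(|\cdot|)}$ are both positive definite with identical leading log singularity, $T_{L-h(|\cdot|)}=T_K-T_{S(|\cdot|)}$ is a bounded compact operator whose negative part is controlled solely by the continuous bounded discrepancy in the sub-leading behavior. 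By judicious choice of $\kappa$ --- effectively tuning the bounded function $h$ --- one can arrange this discrepancy to be as small as needed in operator norm, so that a sup-norm-$\delta$ correction $\Xi^{(\delta)}$ suffices to upgrade $L-h(|\cdot|)$ to a positive-definite kernel, completing the construction.
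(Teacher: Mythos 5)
Your reduction to ``make $K_0^{(\delta)}=L-h(|\cdot|)+\Xi^{(\delta)}$ positive definite with $\|\Xi^{(\delta)}\|_{\infty}\le\delta$'' is a correct reformulation, but the step where you claim this can be done is precisely the content of the lemma, and your argument for it fails. Having fixed $\kappa$ independently of $\delta$, you need the negative part of the operator $T_{L-h}$ to have norm $O(\delta)$ --- as you yourself observe via the Schur test --- and this is false in general. First, $T_{L-h}$ contains the full negative part of $T_L$: only $L+\log(1/|\cdot|)$ is assumed positive definite, not $L$, and the defect of $T_L$ is a fixed order-one quantity that has nothing to do with the choice of $\kappa$; writing $T_{L-h}=T_K-T_{S(|\cdot|)}$ as a difference of two positive operators whose singularities cancel says nothing about the size of its negative part. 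Second, the claim that $h$ can be made small ``by tuning $\kappa$'' is also false: for any admissible $\kappa$ (positive definite on $\bbR^d$, supported in $[0,1]$, $\kappa(0)=1$), the Tur\'an--Siegel extremal bound gives $\int_{B(0,1)}\kappa(|z|)\,\dd z\le 2^{-d}|B(0,1)|$, whence $j_\kappa=\int_0^1(1-\kappa(u))\frac{\dd u}{u}\ge\int_0^1(1-\kappa(u))u^{d-1}\,\dd u\ge c_d>0$ uniformly over $\kappa$. Finally, even granting a small defect, a finite-rank kernel $\sum_k c_k\phi_k\otimes\phi_k$ with small coefficients cannot dominate it: it vanishes on the orthogonal complement of a finite-dimensional space, the eigenfunctions of $\tilde L$ are not those of $T_{L-h}$, and bounding its sup norm would require uniform $L^\infty$ control of the $\phi_k$, which you do not have.

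The paper's proof shows what is actually needed, and the key point is that the kernel $\kappa$ in the decomposition \eqref{ilaunestar} \emph{must depend on $\delta$}: one takes $\kappa=\kappa_0(e^{t_0}\cdot)$ with $t_0=t_0(\delta)$ large, so that only the scales finer than $e^{-t_0}$ are peeled off into the star-scale part, while the positive-definite bulk $\int_0^{t_0}\kappa_0(e^t|x-y|)\,\dd t$ stays inside $K_0^{(\delta)}$ and absorbs the order-one defect of $L$. The residual failure of positive definiteness of $K-\int_{t_0}^{\infty}\kappa_0(e^t|\cdot|)\,\dd t$ is then measured not in operator norm but against the dual Sobolev norm, namely it is bounded below by $-Ce^{-(s-d)t_0/2}\|\varphi\|^2_{H^{-s/2}}$ (this is where $L\in H^{s}_{\mathrm{loc}}$ with $s>d$ enters, via a mollification and a Fourier computation), and the added correction $\eta\delta\int_0^{\infty}e^{-\eta t}\kappa_0(e^t|\cdot|)\,\dd t$ --- which has sup norm exactly $\delta$, giving (A) and (B) --- is \emph{coercive} in that same norm, its Fourier transform being bounded below by $c\delta(1+|\xi|^2)^{-(d+\eta)/2}$. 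Choosing $t_0$ so large that $Ce^{-(s-d)t_0/2}<c\delta$ closes the argument. The two ingredients missing from your sketch are exactly these: a $\delta$-dependent truncation scale keeping the intermediate-scale positivity inside $K_0^{(\delta)}$, and a correction that is coercive (not merely positive and sup-norm small) in the norm in which the defect is quantified.
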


In order to prove Theorem \ref{main}, the most important step is to prove the convergence of $M^{(\gamma)}_{\gep}(f,\go)$ for a field whose covariance kernel satisfies  the assumption in \eqref{ilaunestar} and use our approximation Lemma \ref{lesasump} to conclude.

\begin{proposition}\label{withypo}
Given a Gaussian field $X$ defined on $\cD$ whose covariance kernel satisfies \eqref{ilaunestar},
a  convolution kernel $\theta$, and $\gamma \in \cP'_{\mathrm{III}}$, we have for every $f\in C^{\infty}_c(\cD)$, $\go\in[0,2\pi)$, and $\mu \in \cM_K$  we have
\begin{equation}
 \lim_{\gep \to 0} \bbE \left[e^{i \langle X,\mu\rangle + i v(\gep,\theta,\gamma)M^{(\gamma)}_{\gep}(f,\go)} \right]=
 \bbE \left[ e^{-i  \langle X,\mu\rangle-\frac{1}{2}M^{(2\alpha)}_{0}(e^{|\gamma|^2L}f^2)} \right].
 \end{equation}
 
\end{proposition}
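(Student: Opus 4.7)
The plan is to exploit the star-scale invariant decomposition \eqref{ilaunestar} to cast $v(\gep,\theta,\gamma)\,M^{(\gamma)}_\gep(f,\omega)$ as the terminal value of a real continuous martingale indexed by a scale parameter $t$, identify the limit of its quadratic variation with $\tfrac12 M^{(2\alpha)}_0(e^{|\gamma|^2 L}f^2)$, and then conclude with a conditional central limit theorem of Jacod--Shiryaev type.

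\emph{Step 1 (martingale in scale).} On an enlarged probability space I would build a continuous Gaussian family $(X_t)_{t\geq 0}$ with covariance $K_t(x,y):=K_0(x,y)+\int_0^t\kappa(e^s|x-y|)\,\dd s$ and independent increments in $t$, so that $X_t\to X$ distributionally as $t\to\infty$. Writing $X_{t,\gep}:=X_t*\theta_\gep$ and $K_{t,\gep}$ for its covariance, the complex process
\[
M^{(\gamma)}_{t,\gep}(f):=\int_{\cD_\gep} f(x)\,e^{\gamma X_{t,\gep}(x)-\frac{\gamma^2}{2}K_{t,\gep}(x)}\,\dd x
\]
is a complex $\cF_t$-martingale (with $\cF_t:=\sigma(X_s,s\leq t)$) whose terminal value, reached at $t_\gep\sim\log(1/\gep)$, is $M^{(\gamma)}_\gep(f)$; the crucial point is that $\kappa(e^s|x-y|)$ vanishes for $e^{-s}$ smaller than $2\gep$ on the support of $\theta_\gep\otimes\theta_\gep$, which freezes the martingale past $t_\gep$. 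Consequently the real process $N_t:=v(\gep,\theta,\gamma)\,\Re\bigl(e^{-i\omega}M^{(\gamma)}_{t,\gep}(f)\bigr)$ is an $\cF_t$-martingale with terminal value $v(\gep,\theta,\gamma)\,M^{(\gamma)}_\gep(f,\omega)$.

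\emph{Step 2 (asymptotics of the bracket).} By a direct It\^o-type computation, $\dd\langle N\rangle_t$ is a scale integral of the infinitesimal covariance of $M^{(\gamma)}_{t,\gep}$, which splits into an ``analytic'' term in $\gamma^2$ featuring $e^{\gamma(X(x)+X(y))}$ and an ``anti-analytic'' cross term in $|\gamma|^2$ featuring $e^{\gamma X(x)+\bar\gamma X(y)}$. Only the cross term survives in the limit: on the diagonal $x=y$ one has $e^{\gamma X+\bar\gamma X}=e^{2\alpha X}$, which after the correct renormalization identifies a real GMC of parameter $2\alpha$, while in the off-diagonal regime $|x-y|\gg e^{-t}$ the phase $\cos(\beta(X_{t,\gep}(x)-X_{t,\gep}(y)))$ provides the oscillation needed for decay. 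After the change of variables $z=e^t(x-y)$, the diagonal integration produces the kernel $\ell_\theta$ from \eqref{ltheta} and the local factor $e^{|\gamma|^2 L(x,x)}$; the normalization $v(\gep,\theta,\gamma)$ is designed precisely so that
\[
\langle N\rangle_{t_\gep}\ \xrightarrow[\gep\to 0]{\bbL^1}\ \tfrac12\,M^{(2\alpha)}_0\!\bigl(e^{|\gamma|^2 L}f^2\bigr),
\]
the factor $\tfrac12$ reflecting $\Re(z)^2=\tfrac12(|z|^2+\Re(z^2))$.

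\emph{Step 3 (conditional CLT and conclusion).} Since $X$ is $\cF_\infty$-measurable and the $\cF_X$-measurable limit $\tfrac12 M^{(2\alpha)}_0(e^{|\gamma|^2 L}f^2)$ is reached in $\bbL^1$, a martingale stable convergence theorem applies: provided the individual martingale increments satisfy a conditional Lindeberg-type negligibility, $N_{t_\gep}$ converges stably to a centered Gaussian variable with conditional variance $\tfrac12 M^{(2\alpha)}_0(e^{|\gamma|^2 L}f^2)$, independent of $\cF_X$. Combined with \eqref{ftrans}, this yields the stated convergence of joint characteristic functions.

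\emph{Main obstacle.} The technical heart is Step 2: establishing $\bbL^1$-convergence of $\langle N\rangle_{t_\gep}$ throughout $\cP'_{\mathrm{III}}$. Three distinct phenomena must be controlled. First, the oscillation-driven vanishing of the ``analytic'' and off-diagonal cross terms relies on $\bbL^2$ bounds whose integrability in scale is precisely guaranteed by the defining inequality $|\gamma|^2\geq d$ of phase III. Second, the renormalized diagonal contribution involves the exponential $e^{2\alpha X_{t,\gep}-2\alpha^2 K_{t,\gep}}$, which must be shown to converge in $\bbL^2$ to the GMC $M^{(2\alpha)}_0$; this uses the strict subcriticality $|\alpha|<\sqrt{d/2}$ and is the role played by the $\bbL^2$ variant Theorem \ref{zabbid2} alluded to earlier. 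Third, the critical case $|\gamma|=\sqrt{d}$ must be treated separately: the bracket mass is then no longer concentrated at scale $t_\gep$ but spread uniformly over $[0,t_\gep]$, which explains the logarithmic normalization in the definition of $v(\gep,\theta,\gamma)$ and forces a mildly different change-of-variable argument when identifying the limit.
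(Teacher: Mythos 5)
Your plan follows the same architecture as the paper's proof (Sections \ref{sec:geralth2} and \ref{sec:quadraz2}): build the scale martingale $N^{(\gep)}_t=\bbE[M^{(\gamma)}_\gep(f,\go)\,|\,\cF_t]$ from the star-scale decomposition, prove convergence of its bracket to the real chaos of parameter $2\alpha$ (Proposition \ref{quadraz2}), and conclude by a conditional CLT. However, your Step 2 has a genuine gap in the upper range of $\alpha$. You propose to control the bracket by "$\bbL^2$ bounds" and to show that the diagonal contribution "converges in $\bbL^2$ to the GMC $M^{(2\alpha)}_0$", attributing this to Theorem \ref{zabbid2}. That theorem is an $\bbL^1$ statement, and in fact $M^{(2\alpha)}$ is an $\bbL^2$-bounded martingale only when $(2\alpha)^2<d$, i.e.\ $|\alpha|<\sqrt{d}/2$, whereas $\cP'_{\mathrm{III}}$ allows $|\alpha|$ up to $\sqrt{d/2}$. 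Concretely, the decorrelation step --- showing that $A^{(1)}_{t,\gep}-\bbE[A^{(1)}_{t,\gep}\,|\,\cF_u]$ is negligible --- gives, by a raw second-moment computation, a factor of order $e^{4\alpha^2 t-du}$ relative to $a(t,\gep)^2$, which diverges as soon as $4\alpha^2>d$. For $|\alpha|\in[\sqrt{d}/2,\sqrt{d/2})$ one must truncate on the event $\cA_t(x)=\{X_u(x)\le \gl\, k(u,\gep)\}$ with $\gl$ chosen as in \eqref{lambdabal}, bounding the complement in $\bbL^1$ via Cameron--Martin and Gaussian tails and only the truncated part in $\bbL^2$. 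This is the most delicate step of the whole argument and your proposal does not contain the idea needed to carry it out.

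Two secondary points. First, Theorem \ref{TLC} does not apply off the shelf: for each $\gep$ you run a \emph{different} martingale to $t=\infty$ and then let $\gep\to 0$, which is a triangular-array situation; the paper resolves this by hand with the stopping time $T(A,\gep)$, the cutoff $t_\gep=\sqrt{\log(1/\gep)}$ and the exponential martingale identity, so "a martingale stable convergence theorem applies" needs to be replaced by an actual argument. Second, your normalization is off by a factor of $2$: since $v(\gep,\theta,\gamma)$ already carries the factor $\bigl(\tfrac12\int e^{|\gamma|^2\ell_\theta}\bigr)^{-1/2}$, the correct limit is $v(\gep)^2\langle N^{(\gep)}\rangle_\infty\to M^{(2\alpha)}_0(e^{|\gamma|^2L}f^2)$, not half of it; only then does the conditional characteristic function match \eqref{ftrans}. (Also, the martingale does not literally freeze past $t\sim\log(1/\gep)$, since $Q_{t,\gep}(x,x)>0$ for all $t$; the residual bracket is merely $O(1)$ in $K_{t,\gep}(x,x)$, which is why the paper works with $\langle N^{(\gep)}\rangle_\infty$.)
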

\noindent The proof of Proposition \ref{withypo} is technical and require several steps. We provide a detailed road map at the end of this section. Let us first explain how we deduce our main result from it.

\subsection{Proving Theorem \ref{main} from Proposition \ref{withypo}}
 
Let us fix $f\in C^{\infty}_c(\cD)$ and $\go\in (0,2\pi]$ and $\mu\in \cM_K$. Given $\eta>0$
we consider a bounded open set $\cD'$ which includes the support of $f$ and which is such that
\begin{equation}\label{ddapprox}
\int_{(\cD \setminus \cD')^2}  K(x,y)\mu(\dd x)\mu(\dd y)\le \eta
\end{equation}
and whose  topological closure satisfies $\bar \cD'\subset \cD$.
 We let $K^{(\delta)}$ be a kernel satisfying the assumptions (A) and (B) of Lemma \ref{lesasump}. Given $\delta>0$,
 we can construct two  Gaussian fields $X$ and $X^{(\delta)}$ indexed by $\cD$ and $\cD'$ on the same probability space in such a way that the field 
 $Z^{(\delta)}:= X^{(\delta)}-X$ defined on $\cD'$ is independent of $X$ and has covariance  $\gD^{(\delta)}(x,y)$.
 Letting ${M}^{(\gamma,\delta)}_{\gep}$ denote  the exponential of the convoluted field  $X^{(\delta)}_{\gep}$, and $\mu'$ denote the restriction of $\mu$ to $\cD'$ we have from Proposition
 \ref{withypo}
 
 \begin{equation}\label{proxilimit}
   \lim_{\gep \to 0} \bbE \left[ e^{i\langle X^{(\delta)}, \mu'\rangle+i v(\gep,\theta,\gamma){M}^{(\gamma,\delta)}_{\gep}(f,\go)} \right]=
   \bbE \left[e^{i\langle X^{(\delta)}, \mu'\rangle- \frac 1 2 {M}^{(2\alpha,\delta)}_{0}(e^{|\gamma|^2 L}f^2)} \right].
 \end{equation}
 Now in order to obtain a conclusion for the exponential of the original field $X$
 we have to replace $X^{(\delta)}$ by $X$ and $\mu'$ by $\mu$ in both sides of the above convergence.
 Using Jensen's inequality and the triangle inequality we have
 \begin{multline}\label{ladif1}
  \left|\bbE \left[ e^{i\langle X, \mu\rangle+i v(\gep,\theta,\gamma){M}^{(\gamma)}_{\gep}(f,\go)} -  e^{i\langle X^{(\delta)}, \mu'\rangle+i v(\gep,\theta,\gamma){M}^{(\gamma,\delta)}_{\gep}(f,\go)} \right]\right|
  \\ \le
+
\bbE \left[ \left|  e^{i v(\gep,\theta,\gamma){M}^{(\gamma,\delta)}_{\gep}(f,\go)} 
 -   e^{i v(\gep,\theta,\gamma){M}^{(\gamma)}_{\gep}(f,\go)} \right| \right]
 +  
   \bbE \left[ |e^{i\langle X^{(\delta)}, \mu'\rangle}-e^{i\langle X, \mu\rangle}|  \right].
 \end{multline}
 and in the same manner 
 \begin{multline}\label{ladif2}
  \left|\bbE \left[ e^{i\langle X, \mu\rangle- \frac 1 2 {M}^{(2\alpha)}_{0}(e^{|\gamma|^2 L}f^2)}  -  e^{i\langle X^{(\delta)}, \mu'\rangle- \frac{1}{2} {M}^{(2\alpha,\delta)}_{0}(e^{|\gamma|^2 L}f^2)} \right]\right|
  \\ \le
+
\bbE \left[ \left|   e^{- \frac 1 2 {M}^{(2\alpha)}_{0}(e^{|\gamma|^2 L}f^2)}- e^{- \frac 1 2 {M}^{(2\alpha,\delta)}_{0}(e^{|\gamma|^2 L}f^2)}  \right| \right]
 +  
   \bbE \left[ |e^{i\langle X^{(\delta)}, \mu'\rangle}-e^{i\langle X, \mu\rangle}|  \right].
 \end{multline}
In both r.h.s.\ of  \eqref{ladif1}-\eqref{ladif2} the second term is easier to control. As $u\mapsto e^{iu}$ is Lipshitz we have
  \begin{equation}\label{proxilimat2}
  \bbE \left[ |e^{i\langle X^{(\delta)}, \mu'\rangle}-e^{i\langle X, \mu\rangle}|  \right]\le
  \bbE\left[ |\langle X^{(\delta)}, \mu'\rangle-\langle X, \mu\rangle| \right]
  \le   \bbE\left[ (\langle X^{(\delta)}, \mu'\rangle-\langle X, \mu\rangle)^2 \right]^{1/2}.
 \end{equation}
The variance of can be computed explicitly, we have (recall \eqref{ddapprox})
\begin{multline}
  \bbE\left[ (\langle X^{(\delta)}, \mu'\rangle-\langle X, \mu\rangle)^2 \right]
  = \int_{(\cD')^2} \!\!\!\!\! \Delta^{(\delta)}(x,y) \mu(\dd x)\mu(\dd y)
  +\int_{(\cD\setminus \cD')^2}  \!\!\!\!\! K(x,y) \mu(\dd x)\mu(\dd y)\\
\le   \delta |\mu|(\cD')^2+\eta.
\end{multline}
The reader can check  that the assumption $\mu\in \cM_K$ implies that $|\mu|(\cD')<\infty$
(and hence $\mu'\in \cM_{K^{(\delta)}}$ which we have used implicitly to obtain \eqref{proxilimit}).
 Next we are going to control the first terms in the r.h.s.\ in   \eqref{ladif1}-\eqref{ladif2}.
 there exists a constant $C$ (allowed to depend on $f$, $\gamma$, and on the covariance kernel $K$) such that for all $\gep$ and $\delta$ sufficiently small
 \begin{equation}\begin{split}\label{proxilimat}
\bbE \left[ \left|  e^{i v(\gep,\theta,\gamma){M}^{(\gamma,\delta)}_{\gep}(f,\go)} 
 -   e^{i v(\gep,\theta,\gamma){M}^{(\gamma)}_{\gep}(f,\go)} \right| \right]
&\le C \sqrt{\delta},\\
  \bbE \left[\left| e^{- {M}^{(2\alpha)}_{0}(e^{|\gamma|^2 L}f^2)}- e^{- {M}^{(2\alpha,\delta)}_{0}(e^{|\gamma|^2 L}f^2)}\right| \right]  &\le  C \sqrt{\delta}.
 \end{split}\end{equation}
Before proving \eqref{proxilimat} let us show how it can be used to conclude
 The combination of \eqref{proxilimit}-\eqref{proxilimat} yields
\begin{multline}
   \limsup_{\gep \to 0} \left|\bbE \left[e^{i \langle X,\mu\rangle + i v(\gep,\theta,\gamma)M^{(\gamma)}_{\gep}(f,\go)} \right]-
 \bbE \left[ e^{-i  \langle X,\mu\rangle-\frac{1}{2}M^{(2\alpha)}_{0}(e^{|\gamma|^2L}f^2)} \right]\right|\\\le 2C \sqrt{\delta}+2 \sqrt{ \delta |\mu|(\cD')^2+\eta}.
\end{multline}
The r.h.s.\ can be made arbitrarily small by taking first $\eta$ small (which fixes $\cD'$) and then $\delta$ small.
which is sufficient to conclude since $\delta$ may be chosen arbitrarily small.

\medskip

\noindent Let us now prove \eqref{proxilimat}. Using the fact that $u\mapsto e^{iu}$ and $u\mapsto e^{-u}$ are Lipshitz function on 
$\bbR$ and $\bbR_+$ respectively  (for the first line we also use that $|\mathfrak{Re}(e^{-i\go}z)|\le |z|$)
we have
\begin{equation}\begin{split}\label{deuzinek}
\left|\bbE \left[ e^{i v(\gep,\theta,\gamma){M}^{(\gamma,\delta)}_{\gep}(f,\go)}- 
e^{i v(\gep,\theta,\gamma){M}^{(\gamma)}_{\gep}(f,\go)} \right]\right|&
\le  v(\gep,\theta,\gamma)\bbE \left[\left|{M}^{(\gamma,\delta)}_{\gep}(f)- 
{M}^{(\gamma)}_{\gep}(f)\right|\right],\\
\left| \bbE \left[ e^{- {M}^{(2\alpha,\delta)}_{0}(e^{L}f^2)}-e^{- {M}^{(2\alpha)}_{0}(e^{L}f^2)}   \right]
\right| &\le \bbE \left[\left| {M}^{(2\alpha,\delta)}_{0}(e^{L}f^2)- {M}^{(2\alpha)}_{0}(e^{L}f^2)\right|\right].
\end{split}\end{equation}
 To bound  the r.h.s.\ of the first line in \eqref{deuzinek}, we rely on Cauchy-Schwartz and evaluate the second moment. Using the independence of $X$ and $Z^{(\delta)}$ and the assumption $(B)$ on $\Delta^{(\delta)}$ setting 

 \begin{equation}
   \Delta^{(\delta)}_{\gep}(x,y):= \int_{(\cD')^2}  \Delta^{(\delta)}(z_1,z_2) \theta_{\gep}(x-z_1)\theta_{\gep}(y-z_2)\dd z_1 \dd z_2.
 \end{equation}
 we obtain that - here we assume that $\gep$ is sufficiently small so that the support of $f$ is included in $\cD'_{\gep}$ (recall \eqref{cdgep})
 \begin{multline}\label{lafoorge}
\bbE\left[ \left|{M}^{(\gamma,\delta)}_{\gep}(f)-{M}^{(\gamma)}_{\gep}(f)\right|^2 \right]
= \int_{(\cD')^2} f(x)f(y) \left(e^{|\gamma|^2 \Delta^{(\delta)}_{\gep}(x,y)}-1\right) e^{|\gamma|^2 K_{\gep}(x,y)}\dd x \dd y \\  \le 
\left( e^{\delta |\gamma|^2}-1 \right) 
\int_{(\cD')^2} f(x)f(y)e^{|\gamma|^2 K_{\gep}(x,y)} \dd x \dd y.
\end{multline}
In the second line above we simply used that  $\Delta^{(\delta)}_{\gep}(x,y)\le \delta$ which follows immediately from the assumption (A) in Lemma \ref{lesasump}.
Using the estimate \eqref{estimao} for $K_{\gep}$ we obtain  
that 

\begin{multline}
\int_{\cD^2} f(x)f(y)e^{|\gamma|^2 K_{\gep}(x,y)} \dd x \dd y
\le   C    \int_{\bbR^d}  f(x)f(y)(|x-y|\vee \gep)^{-|\gamma|^2} \dd x \dd y\\
\le 
e^{C|\gamma|^2} \|f \|^2_2  \int_{\bbR^d} (|z|\vee \gep)^{-|\gamma|^2} \dd z.
\end{multline}
The right-hand side is of order $\gep^{d-|\gamma|^2}$ if $|\gamma|>\sqrt{d}$ and of order $\log (1/\gep)$ if  $|\gamma|=\sqrt{d}$.
This allows to conclude from \eqref{lafoorge} that for a constant $C$ which may depend on all parameters but $\delta$ and $\gep$, we have
\begin{equation}
 v(\gep,\theta,\gamma)^2\bbE \left[\left|{M}^{(\gamma,\delta)}_{\gep}(f)- 
{M}^{(\gamma)}_{\gep}(f)\right|^2\right]\le C \delta.
\end{equation}
Let us now evaluate the r.h.s.\ of the  second line in \eqref{deuzinek}. 
 Since, by Theorem \ref{zabbid}, ${M}^{(2\alpha,\delta)}_{\gep}(e^{L}f^2)$ and ${M}^{(2\alpha)}_{\gep}(e^{L}f^2)$ both converge in $\bbL_1$, 
 it is sufficient to prove a bound on  $$\bbE\left[ \left|{M}^{(2\alpha,\delta)}_{\gep}(e^{|\gamma|^2 L} f^2)-{M}^{(2\alpha)}_{\gep}(e^{|\gamma|^2 L} f^2)\right| \right]$$ which is uniform in $\gep$. Assuming that the support of $f$ is included in $\cD'_{\gep}$ (recall \eqref{cdgep}), letting $Z_{\gep}$ denote the field $Z$ convoluted with $\theta_{\gep}$ we have
 \begin{multline}\label{llhhss}
 \bbE\left[ \left|{M}^{(2\alpha,\delta)}_{\gep}(e^{|\gamma|^2 L} f^2)-{M}^{(2\alpha)}_{\gep}(e^{|\gamma|^2 L} f^2)\right| \right]\\
 \le \bbE \left[ \int_{\cD'} \left| e^{2\alpha Z_{\gep}(x)-2\alpha^2\Delta^{(\delta)}_{\gep}(x)}-1\right| e^{|\gamma|^2 L(x)}f^2(x) M^{(2\alpha)}_\gep(\dd x) \right].
 \end{multline}
 Averaging with respect to $X$, and using the fact that $X$ and $Z$ are independent, we obtain that the l.h.s.\ in
 \eqref{llhhss} is equal to 
 \begin{equation}
  \int_{\cD} \bbE \left[\left| e^{2\alpha Z_{\gep}(x)-2\alpha^2\Delta_{\gep}^{(\delta)}(x)}-1\right| \right] e^{|\gamma|^2 L(x)}f^2(x)\dd x 
  \le \sqrt{ e^{4\alpha^2 \delta}-1} \int_{\cD}  e^{|\gamma|^2 L(x)}f^2(x)\dd x 
\end{equation}
where for the inequality we only used Cauchy-Schwartz together with 
\begin{equation}
 \bbE \left[\left( e^{2\alpha Z_{\gep}(x)-2\alpha^2\Delta_{\gep}^{(\delta)}(x)}-1\right)^2 \right]
 = e^{4 \alpha^2 \Delta_{\gep}^{(\delta)}(x)}-1\le e^{4\alpha^2 \delta}-1. 
\end{equation}

 \qed

\subsection{The martingale approximation of the GMC}\label{martinapprox}
Using the assumption \eqref{ilaunestar}, we can introduce another functional approximation of the field $(X_t)_{t\ge 0}$ besides the convolution approximation.
Introducing the notation
$Q_t(x,y):= \kappa(e^t(x-y)),$ we
 define $(X_t(x))_{x\in \bbR^d, t\ge 0}$ as the Gaussian field parameterized by $\cD\times \bbR_+$ with covariance function
 \begin{equation}\label{labigt}
\bbE[X_s(x)X_t(y)]=K_0(x,y)+\int^{s\wedge t}_0 Q_u(x,y) \dd u=:K_{s\wedge t}(x,y). 
 \end{equation}
 By construction $X_t(\cdot)$ is a martingale for the canonical filtration $(\cF_t)_{t\in [0,\infty]}$ defined by
 \begin{equation}\label{defcft}
 \cF_t:= \sigma( X_s, s\in [0,t)).
 \end{equation}
Note that $\cF_X\subset \cF_{\infty}$, and that the inclusion is strict.
 Since, from our assumptions on $K_0$ and $\kappa$,  $K_t(x,y)$ is  H\"older continuous (in space and time) then by Kolmogorov-Chensov Theorem $(X_t(x))_{x\in \cD, t\ge 0}$ admits a 
modification which is space time continuous.
On the same probability space, one can define a distributional field $X$ indexed by $\cM_K$ by setting 
\begin{equation}\label{defdeX}
 \langle X,\mu \rangle:= \lim_{t\to \infty} \int X_t(x) \dd \mu(x),
 \end{equation}
 (the convergence holds in $\bbL_2$). The field $X$ has covariance $K$ and $(X_t)$ is thus a sequence of approximation for $X$. 
We then consider the  following distribution valued martingale by setting for $f\in C^{\infty}_c(\cD)$
\begin{equation}
 M^{(\gamma)}_t(f):= \int_{\bbR^d} f(x) e^{\gamma X_t(x) -\frac{\gamma^2 K_t(x)}{2}} \dd x.
\end{equation}
The notation $K_t$, $X_t$ and $M^{(\gamma)}_t$ conflicts with $K_\gep$, $X_\gep$ and $M^{(\gamma)}_\gep$ introduced earlier, but we believe this abuse of notation to be harmless for all our purposes: the variable $\gep$ will always that be used for convolution approximation while $t$ and a other latin letters will be used for martingale approximations.

\medskip

The following result illustrates the fact that the martingale approximation has an effect which is similar to the convolution approximation.
It can be deduced from the proof in \cite{Natele} and is also a particular case of \cite[Remark 2.4]{lacoin2020}.
\begin{theorema}\label{zabbid2}
 For any $\alpha \in\bbR$ with $|\alpha |<\sqrt{2d}$, then there exists a random distribution $M^{(\alpha)}_{0}$ such that for every $\theta$  and every continuous $f\in \cC^{\infty}_c(\cD)$, we have the following convergence in $\bbL_1$
  $$   \lim_{t \to \infty}M^{(\alpha)}_{t}(f)= M^{(\alpha)}_{0}(f). $$
 where the distribution $M^{(\alpha)}_{0}$ is the same as in Theorem \ref{zabbid}. 
\end{theorema}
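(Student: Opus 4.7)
\emph{Proof proposal.} The plan is to proceed in three steps: (i) almost sure convergence of the nonnegative martingale $(M^{(\alpha)}_t(f))_{t\ge 0}$, (ii) uniform integrability to upgrade to $\bbL_1$-convergence, and (iii) identification of the limit with $M^{(\alpha)}_0(f)$ from Theorem \ref{zabbid}. Assume without loss of generality that $f\ge 0$. Step (i) is immediate: each $e^{\alpha X_t(x)-\alpha^2 K_t(x)/2}$ is a nonnegative $(\cF_t)$-martingale with mean one, so by Fubini $M^{(\alpha)}_t(f)$ is a nonnegative $(\cF_t)$-martingale of constant mean $\int f$, hence converges almost surely to some $M^{(\alpha)}_\infty(f)\ge 0$.

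For (ii), I would prove $\sup_t \bbE[M^{(\alpha)}_t(f)^q]<\infty$ for some $q\in(1,2d/\alpha^2)$ (such a $q$ exists because $\alpha^2<2d$). The star-scale assumptions (i)-(iii) on $\kappa$ yield the uniform bound $K_t(x,y)\le \log(1/(|x-y|\vee e^{-t}))+C$. For integer moments one obtains
\[
\bbE[M^{(\alpha)}_t(f)^n]\le \int_{\cD^n}\prod_{i<j}(|x_i-x_j|\vee e^{-t})^{-\alpha^2}\prod_i f(x_i)\,dx_i,
\]
which is uniformly finite as long as $n\alpha^2<2d$; the case of non-integer $q$ follows via Kahane's convexity inequality applied with an exactly scaling majorant. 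Uniform integrability and hence $\bbL_1$-convergence follow.

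For (iii), I would introduce a double approximation $X_{t,\gep}(x):=\langle X_t,\theta_\gep(x-\cdot)\rangle$ with associated chaos $M^{(\alpha)}_{t,\gep}(f)$. For fixed $t$, continuity of $X_t$ on the compact support of $f$ implies $M^{(\alpha)}_{t,\gep}(f)\to M^{(\alpha)}_t(f)$ in $\bbL_1$ as $\gep\to 0$. Conversely, decompose $X=X_t+X^t$ with $X^t$ independent of $\cF_t$ of covariance $\int_t^\infty Q_u(x,y)\,du$ supported on $\{|x-y|\le e^{-t}\}$; for $\gep\gg e^{-t}$, the difference $M^{(\alpha)}_\gep(f)-M^{(\alpha)}_{t,\gep}(f)$ can be controlled by a Kahane-type convexity comparison with respect to a common majorizing kernel, showing its $\bbL_1$-norm vanishes as $t\to\infty$. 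Combined with $M^{(\alpha)}_\gep(f)\to M^{(\alpha)}_0(f)$ from Theorem \ref{zabbid}, this forces $M^{(\alpha)}_\infty(f)=M^{(\alpha)}_0(f)$.

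The main obstacle is the quantitative comparison in step (iii): the residual short-range field $X^t$ modifies the covariance of $X_\gep$ in a controlled way, and a clean application of Kahane's convexity requires a careful choice of majorizing kernel that dominates both $K_\gep$ and the covariance of $X_{t,\gep}$ simultaneously. Alternatively, in the spirit of \cite{lacoin2020}, one can verify that both the convolution and martingale approximations fit Shamov's axiomatic framework of ``good approximations'' of log-correlated fields, in which case uniqueness of the associated GMC directly identifies the two limits.
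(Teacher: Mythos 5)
First, note that the paper does not actually prove Theorem~\ref{zabbid2}: it is stated as a quoted result, deduced from the arguments of \cite{Natele} and from \cite[Remark 2.4]{lacoin2020}. So your proposal is an attempt at a from-scratch proof. Steps (i) and (ii) are fine in spirit (a.s.\ convergence of the nonnegative martingale, plus uniform $\bbL_q$ bounds for $q\in(1,2d/\alpha^2)$ via Kahane convexity against an exactly scaling field — this is the classical route to uniform integrability, although, as explained below, you do not actually need it).

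The genuine gap is in step (iii), in the interchange of limits. Your identification rests on two comparisons that live in \emph{incompatible regimes}: the approximation $M^{(\alpha)}_{t,\gep}(f)\approx M^{(\alpha)}_t(f)$ requires $\gep\ll e^{-t}$ (the field $X_t$ fluctuates on scale $e^{-t}$, and for $\gep\gg e^{-t}$ one has $K_{t,\gep}(x)\approx\log(1/\gep)\ne K_t(x)\approx t$, so the two chaoses are genuinely different), whereas your comparison $M^{(\alpha)}_{t,\gep}(f)\approx M^{(\alpha)}_\gep(f)$ is asserted only for $\gep\gg e^{-t}$. The four-term triangle inequality
$\|M_\infty-M_0\|_1\le\|M_\infty-M_t\|_1+\|M_t-M_{t,\gep}\|_1+\|M_{t,\gep}-M_\gep\|_1+\|M_\gep-M_0\|_1$
therefore cannot be closed: there is no choice of $(t,\gep)$ making the second and third terms small simultaneously. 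Moreover, Kahane's convexity inequality compares expectations of convex functionals of two chaoses with ordered covariances; it does not bound the $\bbL_1$ distance between two different chaos measures, so it is not the right tool for $\|M_\gep-M_{t,\gep}\|_1$. The clean repair — which the paper itself uses in its roadmap for the complex case — is the identity $M^{(\alpha)}_{t,\gep}(f)=\bbE[M^{(\alpha)}_\gep(f)\mid\cF_t]$. From it, Jensen gives $\|M^{(\alpha)}_{t,\gep}(f)-\bbE[M^{(\alpha)}_0(f)\mid\cF_t]\|_{\bbL_1}\le\|M^{(\alpha)}_\gep(f)-M^{(\alpha)}_0(f)\|_{\bbL_1}\to0$ \emph{uniformly in $t$}; combined with $M^{(\alpha)}_{t,\gep}(f)\to M^{(\alpha)}_t(f)$ for fixed $t$ (your easy direction, where $\gep\ll e^{-t}$ is allowed), this yields $M^{(\alpha)}_t(f)=\bbE[M^{(\alpha)}_0(f)\mid\cF_t]$ a.s. Since $M^{(\alpha)}_0(f)$ is $\cF_X$-measurable and $\cF_X\subset\cF_\infty$, the closed-martingale convergence theorem then gives $M^{(\alpha)}_t(f)\to M^{(\alpha)}_0(f)$ in $\bbL_1$ directly — making your steps (i) and (ii) superfluous. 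Your alternative suggestion (verifying that both approximations fall under Shamov-type uniqueness, as in \cite{lacoin2020}) is legitimate and is essentially the route the paper cites.
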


Before proving Proposition \ref{withypo},
we are going to prove that 
$M^{(\gamma)}_t$, once renormalized also converges to a white noise with intensity $M^{(2\alpha)}_0$. We only prove convergence of finite dimensional distribution but, as the reader can check, the proof of tightness (Proposition \ref{tiggh}) needs no adaptation in this case. 
While this result is not necessary to prove Proposition \ref{withypo},  it provides a step of intermediate difficulty and thus serves a didactical purpose. Furthermore Theorem \ref{martinpro} presents an interest in itself since it provides further indication that the scaling limit is universal since it is the same for convolution approximation and for martingale approximation.

\medskip

\noindent  As for the convolution case, we define for $\go\in [0,2\pi)$
\begin{equation}
 M^{(\gamma)}_t(f,\go):= \mathfrak{Re}(e^{-i\go}  M^{(\gamma)}_t(f)).
\end{equation}
and set
\begin{equation}
 \bar v(t,\kappa,\gamma):=\begin{cases}
                           e^{\frac{d-|\gamma|^2}{2}t} \left( \frac 1 2 \int_{\bbR^d} e^{-|\gamma|^2 \ell_{\kappa}(|z|)} \dd z\right)^{-1/2}\quad &\text{ if } |\gamma|>\sqrt{d},\\
                            \left(\frac{\pi^{\frac{d}{2}}t}{\gG(d/2)} \right)^{-1/2} \quad  &\text{ if } |\gamma|=\sqrt{d},
                          \end{cases}
\end{equation}
where 
\begin{equation}
 \ell_{\kappa}(r):= \int^{\infty}_0 \kappa(e^{-t} r)-\kappa(e^{-t})\dd t.
\end{equation}

\begin{theorem}\label{martinpro}
Let $X$ a Gaussian field  on $\cD$ whose covariance kernel satisfies \eqref{ilaunestar}, and $\gamma \in \cP'_{\mathrm{III}}$. We have for every  $f\in C^{\infty}_c(\cD)$
$\go\in [0,2\pi)$, and $\mu \in \cM_K$
\begin{equation}
 \lim_{t \to \infty} \bbE \left[ e^{i\langle \mu,X\rangle+i \bar v(t,\kappa,\gamma)M^{(\gamma)}_{t}(f,\go)} \right]= \bbE \left[ e^{i\langle \mu,X\rangle- \frac{1}{2}M^{(2\alpha)}_{0}(e^{|\gamma|^2 L}f^2)} \right].
\end{equation}
 
\end{theorem}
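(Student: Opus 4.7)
The strategy is a stable martingale CLT argument: the process $s\mapsto M^{(\gamma)}_s(f,\go)$ is a continuous real $(\cF_s)$-martingale, and I will show that, with $N^T_s := \bar v(T,\kappa,\gamma)\, M^{(\gamma)}_s(f,\go)$, the random variable $N^T_T$ converges stably (w.r.t.\ $X$) as $T\to\infty$ to a centered Gaussian with random variance $M^{(2\alpha)}_0(e^{|\gamma|^2 L} f^2)$. First I would reduce to the joint convergence of $(X_T(\mu), N^T_T)$ towards $(\langle X,\mu\rangle, G)$: since $\langle X,\mu\rangle - X_T(\mu)$ is a centered Gaussian of vanishing variance independent of $\cF_T$, the characteristic function in the theorem statement converges to $\bbE[e^{i\langle X,\mu\rangle}e^{-\frac{1}{2} M^{(2\alpha)}_0(e^{|\gamma|^2 L}f^2)}]$ provided $\bbE[e^{iX_T(\mu)+iN^T_T}]$ does. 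The inputs required by the stable martingale CLT (see \cite[VIII-Section 5c]{jacodsh}) are then (a) $\langle N^T\rangle_T \to M^{(2\alpha)}_0(e^{|\gamma|^2 L}f^2)$ in probability, and (b) the asymptotic orthogonality $\bar v(T,\kappa,\gamma)\langle M^{(\gamma)}(f,\go), X(\mu)\rangle_T \to 0$ in probability.

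\textbf{Computing the brackets.} By It\^o's formula, $s\mapsto e^{\gamma X_s(x)-\gamma^2 K_s(x)/2}$ is a complex continuous martingale with stochastic differential $\gamma e^{\gamma X_s(x)-\gamma^2 K_s(x)/2}\,dX_s(x)$, from which one obtains
\begin{align*}
d\langle M^{(\gamma)}(f),\overline{M^{(\gamma)}(f)}\rangle_s &= |\gamma|^2 \iint f(x)f(y) e^{\gamma X_s(x)+\bar\gamma X_s(y)-[\gamma^2 K_s(x)+\bar\gamma^2 K_s(y)]/2}\kappa(e^s|x-y|)\,dx\,dy\,ds,\\
d\langle M^{(\gamma)}(f)\rangle_s &= \gamma^2 \iint f(x)f(y) e^{\gamma(X_s(x)+X_s(y))-\gamma^2[K_s(x)+K_s(y)]/2}\kappa(e^s|x-y|)\,dx\,dy\,ds.
\end{align*}
Combining with $\langle \mathfrak{Re}(e^{-i\go}Z)\rangle = \tfrac12\langle Z,\bar Z\rangle + \tfrac12\mathfrak{Re}[e^{-2i\go}\langle Z\rangle]$, (a) reduces to showing $\tfrac12 \bar v(T,\kappa,\gamma)^2\langle M^{(\gamma)}(f),\overline{M^{(\gamma)}(f)}\rangle_T \to M^{(2\alpha)}_0(e^{|\gamma|^2 L}f^2)$ in probability, together with $\bar v(T,\kappa,\gamma)^2 \langle M^{(\gamma)}(f)\rangle_T \to 0$. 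For the first, I would change variables $y = x + e^{-s}z$ with $|z|\le 1$ (so that $\kappa(e^s|x-y|)=\kappa(|z|)$ and the Jacobian contributes $e^{-sd}$), decompose $\gamma X_s(x)+\bar\gamma X_s(y) = 2\alpha X_s(x) + \bar\gamma[X_s(y)-X_s(x)]$, and exploit the star-scale decomposition to argue that the small-scale Gaussian increment $X_s(y)-X_s(x)$ is asymptotically independent of the macroscopic part of $X_s(x)$; averaging $e^{\bar\gamma[X_s(y)-X_s(x)]}$ then produces a deterministic factor, explicit in terms of $\ell_\kappa(|z|)$, whose integral over $B(0,1)$ is precisely the denominator of $\bar v(T,\kappa,\gamma)$. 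What remains is $e^{(|\gamma|^2-d)s}$ times the density $e^{2\alpha X_s(x) - 2\alpha^2 K_s(x)}$ of $M^{(2\alpha)}_s$ integrated against $e^{|\gamma|^2 L(x,x)}f(x)^2$: for $|\gamma|>\sqrt d$ the $s$-integral concentrates near $s=T$ with prefactor $(|\gamma|^2-d)^{-1}e^{(|\gamma|^2-d)T}$ exactly cancelled by $\bar v(T,\kappa,\gamma)^2$, while in the critical case $|\gamma|=\sqrt d$ the divergence is linear in $T$ with the constant $\pi^{d/2}/\Gamma(d/2)$. Theorem \ref{zabbid2} applied to $M^{(2\alpha)}_\cdot(e^{|\gamma|^2 L}f^2)$ then identifies the random limit. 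The residual $\langle M^{(\gamma)}(f)\rangle_T$ piece and the cross-bracket needed for (b) are shown to be negligible by a direct $L^1$ estimate: the integrands carry an oscillating phase $e^{i\beta(X_s(x)+X_s(y))}$ (resp.\ $e^{i\beta X_s(x)}$) whose expectation supplies a factor $e^{-\beta^2\cdot}$ that defeats the exponential growth after rescaling.

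\textbf{Main obstacle.} The hardest step is rigorously identifying both the random limit \emph{and} the exact deterministic constant in (a). This requires a delicate multi-scale analysis cleanly separating the macroscopic component of $X_s$ (which carries the limiting GMC density $e^{2\alpha X_s-2\alpha^2 K_s}$) from the microscopic Gaussian fluctuation at scale $e^{-s}$ (which must be averaged against $e^{\bar\gamma\cdot}$ to produce the normalizing constant $\tfrac12\int e^{-|\gamma|^2\ell_\kappa(|z|)}\,dz$ appearing in $\bar v(T,\kappa,\gamma)$). Upgrading $L^1$ convergence of the mean of the bracket to convergence in probability of the bracket itself requires a second-moment estimate, which amounts to the same multi-scale analysis applied to a two-point correlation between base points $(x_1,x_2)$. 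Finally, the critical case $|\gamma|=\sqrt d$ must be handled separately as the normalization is logarithmic in $T$ rather than exponential, with its own constant. These are essentially the same difficulties that reappear, in substantially more technical form due to the convolution approximation, in the proof of Proposition \ref{withypo}, which is why treating the martingale case first serves a useful didactic purpose.
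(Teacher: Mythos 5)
Your overall route is the paper's: write $N_t=M^{(\gamma)}_t(f,\go)$ as a continuous $(\cF_t)$-martingale, compute $\langle W,\bar W\rangle_t$ and $\langle W,W\rangle_t$ by It\^o exactly as you do, prove a law of large numbers for the normalized bracket (Proposition \ref{quadraz}), and conclude by the stable martingale CLT (Theorem \ref{TLC}). Note that the version of the CLT cited in the paper requires only the bracket condition, jointly with an arbitrary bounded $\cF_\infty$-measurable $Y$; since $\cF_X\subset\cF_\infty$, your orthogonality condition (b) and the reduction from $\langle X,\mu\rangle$ to $X_T(\mu)$ are superfluous. However, two steps of your sketch would fail as written. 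First, the claim that $\bar v(T,\kappa,\gamma)^2\langle M^{(\gamma)}(f)\rangle_T\to0$ follows from ``a direct $L^1$ estimate'' exploiting the oscillating phase: once you put the modulus inside the integral the phase is gone, and $\bbE[|B_s|]$ is of order $e^{(|\gamma|^2-d)s}$, i.e.\ exactly the order of the main term $a(s)$. The factor $e^{-\beta^2(\cdot)}$ appears only in $\bbE[B_s]$ (or a conditional expectation), and smallness of the mean does not give convergence to $0$ in probability. The paper's fix is to condition on $\cF_u$ with $u=t-\log t$: the conditional expectation $\bbE[B_t\,|\,\cF_u]$ gains the factor $e^{-2\beta^2(t-u)}$ in $L^1$, while the fluctuation $B_t-\bbE[B_t\,|\,\cF_u]$ is controlled in $L^2$ using the finite range $e^{-u}$ of the correlations of $X_{[u,t]}$.

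Second, and this is the genuinely missing idea, the second-moment estimate you invoke to pass from the mean of the bracket to the bracket itself does not close on the whole range $|\alpha|<\sqrt{d/2}$. For the fluctuation $A^{(1)}_t-\bbE[A^{(1)}_t\,|\,\cF_u]$ the naive second moment is of order $e^{2(|\gamma|^2-d)t}\,e^{4\alpha^2t-du}$, which is $o(a(t)^2)$ only when $4\alpha^2<d$. For $|\alpha|\in[\sqrt{d}/2,\sqrt{d/2})$ one must first truncate the field on the event $\{X_u(x)\le\gl u\}$ for a parameter $\gl\in(2\alpha,4\alpha)$ with $\tfrac12(4\alpha-\gl)^2>4\alpha^2-d$ (cf.\ \eqref{lambdabal}), bounding the contribution of the complementary event in $L^1$ via Cameron--Martin and Gaussian tails; without this truncation the argument breaks down precisely in the upper part of the phase. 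Minor points on constants: after the change of variables $y=e^tz$ the normalizing constant is $\int_{\bbR^d}e^{-|\gamma|^2\ell_\kappa(y)}\dd y$ (not an integral over $B(0,1)$), no factor $(|\gamma|^2-d)^{-1}$ appears if one integrates the exact derivative of $e^{|\gamma|^2\hat K_s(0,z)}$ in $s$, and the $e^{-|\gamma|^2 j_\kappa}$ correction of \eqref{jkappax} is what converts $K_0(x,x)$ into $L(x,x)$ in the limiting intensity.
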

The function $L$ appearing in the theorem above is simply defined by \eqref{fourme}. Prefer to use $L$ instead of $K_0$ in the result since the latter depends on the specific choice which is made for $\kappa$ while $L$ is entirely entirely determined by the kernel  $K$. Since we have
\begin{equation}
L(x,y):= K_0(x,y) + \int^{\infty}_0 \kappa(e^t|x-y|)\dd t- \log \frac{1}{|x-y|}, 
\end{equation}
looking at the value of the difference of the two last terms near the diagonal, we obtain that $L(x)$ and $K_0(x)$ only differ by a constant
\begin{equation}\label{jkappax}
 L(x,x)= K_0(x,x)-\int^{\infty}_0 \left(1-\kappa(e^{-t})\right) \dd t.
\end{equation}
We introduce the notation
\begin{equation}\label{jkappa}
 j_{\kappa}:= \int^{\infty}_0 \left(1-\kappa(e^{-t})\right) \dd t.
\end{equation}
The proof of Theorem \ref{martinpro} relies on a very simple strategy. 
We need to prove that the quadratic variation of $(M^{(\gamma)}_{t}(f,\go))$ (recall that this a
continuous martingale with respect to the filtration $(\cF_t)$)
satisfies the law of large number given in the following proposition, and then use a martingale Central Limit Theorem which we introduce in the next section.

\begin{proposition}\label{quadraz}
 Under the assumption of Theorem \ref{martinpro} setting $N_t:=M^{(\gamma)}_t(f,\go)$
 we have
 \begin{equation}
  \lim_{t\to \infty} \bar v(t,\kappa,\gamma)^2\langle N\rangle_t =  M^{(2\alpha)}_0(e^{-|\gamma^2| L}f^2).
 \end{equation}
\end{proposition}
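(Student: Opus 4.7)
I would rely on It\^o's calculus. Since $H_t(x):=e^{\gamma X_t(x)-\frac{\gamma^2}{2}K_t(x)}$ is, for each fixed $x$, a continuous complex martingale satisfying $\dd H_t(x)=\gamma H_t(x)\,\dd X_t(x)$ (the It\^o correction $\frac{\gamma^2}{2}\kappa(0)H_t(x)\dd t$ is exactly compensated by the drift $-\frac{\gamma^2}{2}\dd K_t(x)$), the process $M^{(\gamma)}_t(f)$ is a complex martingale with differential $\gamma\int f(x)H_t(x)\dd X_t(x)\dd x$. Setting $R_s(x):=\mathfrak{Re}(e^{-i\omega}\gamma H_s(x))$, one obtains
\begin{equation*}
\langle N\rangle_t=\int_0^t\int_{\cD^2}f(x)f(y)R_s(x)R_s(y)Q_s(x,y)\dd x\dd y\dd s.
\end{equation*}

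I would then split $R_s(x)R_s(y)=A_s(x,y)+B_s(x,y)$ via the product-to-sum identity, with
\begin{equation*}
A_s(x,y)=\tfrac{|\gamma|^2}{2}\mathfrak{Re}(H_s(x)\overline{H_s(y)}),\qquad B_s(x,y)=\tfrac{|\gamma|^2}{2}\mathfrak{Re}(e^{2i(\phi-\omega)}H_s(x)H_s(y)),
\end{equation*}
where $\phi=\arg\gamma$. The piece $A_s$ carries the slowly oscillating factor $\cos(\beta(X_s(x)-X_s(y)))$, which is $O(1)$ on the microscopic scale $|x-y|\sim e^{-s}$, while $B_s$ contains the rapidly oscillating sum phase $\cos(\beta(X_s(x)+X_s(y)))$. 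The $B$-piece is negligible after rescaling: the identity $\bbE[H_s(x)H_s(y)]=e^{\gamma^2 K_s(x,y)}$ gives modulus $e^{(\alpha^2-\beta^2)K_s(x,y)}$, and the constraint $\gamma\in\cP'_{\mathrm{III}}$ forces $\alpha^2-\beta^2\le 2\alpha^2-d<0$, so a straightforward $L^1$ estimate yields $\bar v(t,\kappa,\gamma)^2$ times the $B$-contribution equal to $O(e^{-2\beta^2 t})$, which tends to zero.

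For the $A$-piece I would perform the change of variable $y=x+e^{-s}z$ and use the algebraic identity
\begin{equation*}
H_s(x)\overline{H_s(y)}=e^{2\alpha X_s(x)-2\alpha^2 K_s(x)}\cdot e^{|\gamma|^2 K_s(x)}\cdot e^{\bar\gamma V_s(x,z)}\cdot(1+o(1)),
\end{equation*}
with $V_s(x,z):=X_s(x+e^{-s}z)-X_s(x)$. The asymptotics $K_s(x,x+e^{-s}z)=K_0(x)+s-j_\kappa+\ell_\kappa(|z|)+o(1)$ (obtained by the substitution $v=s-u$ in \eqref{labigt}) show that $V_s$ is asymptotically a centred Gaussian with variance $2(j_\kappa-\ell_\kappa(|z|))$, while its covariance with $X_s(x)$ stays $O(1)$, negligible relative to $\Var(X_s(x))\sim s$; hence $V_s$ decorrelates from the coarse scale. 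Conditioning on $\cF_{s-T}$ for a large fixed $T$ and replacing the fast factor $\mathfrak{Re}(e^{\bar\gamma V_s(x,z)})$ by its (conditional) expectation, then integrating against $\kappa(|z|)\dd z$, produces precisely the normalization constant that appears in $\bar v(t,\kappa,\gamma)$.

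Using $K_s(x)=L(x)+j_\kappa+s$, the $A$-contribution is then asymptotically proportional to $\bar v(t,\kappa,\gamma)^2\int_0^t e^{(|\gamma|^2-d)s}M^{(2\alpha)}_s(e^{|\gamma|^2 L}f^2)\dd s$. For $|\gamma|>\sqrt d$ the kernel $e^{(|\gamma|^2-d)(s-t)}$ concentrates near $s=t$ and the limit equals $M^{(2\alpha)}_0(e^{|\gamma|^2 L}f^2)$ by the $L^1$ convergence $M^{(2\alpha)}_s\to M^{(2\alpha)}_0$ from Theorem~\ref{zabbid2}; the critical case $|\gamma|=\sqrt d$ is handled by a Ces\`{a}ro argument on $[0,t]$. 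The main obstacle is the ``replace fast factor by its conditional mean'' step: this requires a variance estimate establishing that the contributions from pairs of well-separated macroscopic points $x,x'$ decorrelate quickly enough for the spatial integral to concentrate on its conditional expectation, which is the place where the martingale structure of the approximation must be used most carefully.
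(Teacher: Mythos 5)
Your overall strategy is the same as the paper's: It\^o calculus expresses $\langle N\rangle_t$ as a linear combination of $\langle W,\bar W\rangle_t$ and $\mathfrak{Re}(e^{-2i\go}\langle W,W\rangle_t)$ for the complex martingale $W_t=M^{(\gamma)}_t(f)$; the conjugate bracket produces the real chaos $M^{(2\alpha)}_0(e^{|\gamma|^2L}f^2)$ after decoupling the microscopic increment $V_s$ from the coarse field by conditioning at an intermediate scale; and the final integration over $s$, using the $L^1$ convergence of Theorem \ref{zabbid2} and concentration of the kernel near $s=t$, is exactly the paper's last step. There are, however, two genuine gaps.

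First, your dismissal of the $B$-piece is not valid as written. The identity $\bbE[H_s(x)H_s(y)]=e^{\gamma^2K_s(x,y)}$ only controls the \emph{mean} of $B_s$, whereas what must vanish is the random variable $\int_0^tB_s\,\dd s$ itself (in probability or in $L^1$); and $\bbE[|B_s|]\le\int|f(x)f(y)|\,Q_s(x,y)\,e^{\alpha^2K_s(x,y)+\beta^2K_s(x)}\dd x\dd y$ is of order $e^{(|\gamma|^2-d)s}$, i.e.\ exactly the same order as the $A$-piece, so no ``straightforward $L^1$ estimate'' closes this. The paper's fix is to condition on $\cF_u$ with $u=t-\log t$: the conditional expectation $\bbE[B_t\,|\,\cF_u]$ carries a deterministic damping $e^{-2\beta^2(t-u)}$ in first moment, and the fluctuation $B_t-\bbE[B_t\,|\,\cF_u]$ is then handled by the same second-moment argument as the $A$-piece. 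Second, the variance estimate you defer at the end is not mere bookkeeping: for $\alpha\in[\sqrt d/2,\sqrt{d/2})$ — a range that $\cP'_{\mathrm{III}}$ does contain — the plain second moment of $A^{(1)}_t-\bbE[A^{(1)}_t\,|\,\cF_u]$ is of order $e^{2(|\gamma|^2-d)t}e^{4\alpha^2t-du}$, which does \emph{not} go to zero relative to $a(t)^2$. The paper must truncate on the event $\{X_u(x)\le\gl u\}$ with $\gl\in(2\alpha,4\alpha)$ chosen so that $\tfrac12(4\alpha-\gl)^2>4\alpha^2-d$, bounding the complementary event in first moment via Cameron--Martin and Gaussian tails. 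Without such a truncation your scheme fails on part of the parameter range, so this step needs to be supplied, not just flagged.
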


\subsection{A martingale CLT with a random variance}

One of the key ideas our proof is the use of the following Central Limit Theorem with a random variance.
We consider $(\cF_t)$ to be a continuous filtration, $(N_t)$ be a continuous local martingale for the filtration $(\cF_t)$, $Z$ a non-negative random variable and 
$v: \bbR_+ \to \bbR$ be a non-increasing positive continuous  function such that $\lim_{t\to \infty} v(t)=0$. The following result is a particular case of \cite[Theorem 5.50, Chap. VIII-Section 5c]{jacodsh} (the reference only treats the case $v(t)=t^{-1/2}$ but this is just a matter of time rescaling).

\begin{theorem}\label{TLC}
 If the quadratic variation of $N$ satisfies the following convergence in probability
 \begin{equation}\label{ziout}
  \lim_{t\to \infty} v(t)^{2} \langle N \rangle_t= Z
 \end{equation}
then we have for every $\xi,\xi'\in \bbR$ and $Y$ bounded real valued $\cF_{\infty}$-measurable random variable 
 \begin{equation}
  \lim_{t\to \infty} \bbE \left[ e^{i\xi v(t) N_t+i\xi' Y} \right]=\bbE\left[ e^{-\frac{\xi^2}{2}Z+i\xi'Y}\right].
 \end{equation} 
In other words we have the following joint convergence in distribution
\begin{equation}
( Y, v(t)N_t)  \ \stackrel{t\to \infty}{\Longrightarrow}  \ (Y,\sqrt{Z}\times \cN)
\end{equation}
where $\cN$ is a standard normal variable independent of $Z$.

\end{theorem}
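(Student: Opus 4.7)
The plan is to use the complex exponential martingale method, combined with a truncation of $N$ at a stopping time to tame the unbounded quadratic variation. Setting $\Phi := e^{i\xi' Y}$, a bounded complex-valued $\cF_\infty$-measurable random variable, the goal reduces to
$$ \bbE\bigl[e^{i\xi v(t) N_t}\Phi\bigr] \xrightarrow{t\to\infty} \bbE\bigl[e^{-\xi^2 Z/2}\Phi\bigr]. $$
Introduce the stopping time $\tau_K^{(t)} := \inf\{s : v(t)^2 \langle N\rangle_s \ge K\}$ and the stopped process $N^{\tau_K^{(t)}}$, which satisfies the deterministic bound $v(t)^2 \langle N^{\tau_K^{(t)}}\rangle_\cdot \le K$. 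The truncation errors
$$ \bigl|\bbE[e^{i\xi v(t) N_t}\Phi] - \bbE[e^{i\xi v(t) N_t^{\tau_K^{(t)}}}\Phi]\bigr| \le 2\|\Phi\|_\infty \bbP\bigl(v(t)^2 \langle N\rangle_t \ge K\bigr) \xrightarrow{t\to\infty} 2\|\Phi\|_\infty \bbP(Z \ge K), $$
$$ \bigl|\bbE[e^{-\xi^2 Z/2}\Phi] - \bbE[e^{-\xi^2 (Z \wedge K)/2}\Phi]\bigr| \le 2\|\Phi\|_\infty \bbP(Z > K), $$
both vanish as $K\to\infty$ since $Z$ is a.s.\ finite; it therefore suffices to prove, for every fixed $K>0$,
$$ \lim_{t\to\infty}\bbE\bigl[e^{i\xi v(t) N_t^{\tau_K^{(t)}}}\Phi\bigr] = \bbE\bigl[e^{-\xi^2 (Z \wedge K)/2}\Phi\bigr]. \qquad(\star) $$

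For this bounded-variance statement, since $v(t)$ is a deterministic constant in the variable $s$, Itô's formula applied to $z\mapsto e^z$ shows that
$$ F^{(t)}_s := \exp\bigl(i\xi v(t) N^{\tau_K^{(t)}}_s + \tfrac{\xi^2}{2} v(t)^2 \langle N^{\tau_K^{(t)}}\rangle_s\bigr) $$
is a complex $(\cF_s)$-martingale starting at $1$, uniformly bounded by $e^{\xi^2 K/2}$ (hence a true bounded martingale), so Doob's optional stopping gives, for every $s \le t$ and every bounded $\cF_s$-measurable $\Psi$,
$$ \bbE\bigl[F^{(t)}_t \Psi\bigr] = \bbE\bigl[F^{(t)}_s \Psi\bigr]. $$
I would apply this identity with $\Psi_s := \bbE\bigl[e^{-\xi^2 (Z \wedge K)/2}\Phi \mid \cF_s\bigr]$, which converges to $e^{-\xi^2 (Z \wedge K)/2}\Phi$ in $L^1$ as $s\to\infty$ by Lévy's upward martingale convergence theorem, and take the iterated limit $t \to \infty$ then $s \to \infty$.

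For the right-hand side: $v(t)\to 0$ together with the a.s.\ finiteness of $N_s, \langle N\rangle_s$ forces $F^{(t)}_s \to 1$ in probability, and the uniform bound $e^{\xi^2 K/2}$ combined with dominated convergence yields $\bbE[F^{(t)}_s \Psi_s] \to \bbE[\Psi_s] = \bbE[e^{-\xi^2 (Z \wedge K)/2}\Phi]$. For the left-hand side, write $F^{(t)}_t = e^{i\xi v(t) N_t^{\tau_K^{(t)}}} \cdot e^{(\xi^2/2)(v(t)^2 \langle N\rangle_t \wedge K)}$; the prefactor converges in probability to $e^{(\xi^2/2)(Z \wedge K)}$ (bounded by $e^{\xi^2 K/2}$) because $v(t)^2 \langle N\rangle_t \wedge K \to Z \wedge K$ in probability, and together with $|e^{i\xi v(t) N_t^{\tau_K^{(t)}}}|=1$ this allows its substitution by dominated convergence. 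The resulting $e^{(\xi^2/2)(Z\wedge K)}$ factor then cancels against the $e^{-(\xi^2/2)(Z\wedge K)}$ hidden inside $\Psi_s$, up to an $L^1$-approximation error of order $e^{\xi^2 K/2}\,\|\Psi_s - e^{-\xi^2(Z\wedge K)/2}\Phi\|_{L^1}$, which vanishes as $s\to\infty$ for fixed $K$, leaving $\bbE[e^{i\xi v(t) N_t^{\tau_K^{(t)}}}\Phi]$ on the left-hand side and hence $(\star)$.

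The main obstacle, which dictates this choice of strategy, is that the naive complex exponential local martingale $\exp(i\xi v(t) N_s + \tfrac{\xi^2}{2} v(t)^2 \langle N\rangle_s)$ has modulus $\exp(\tfrac{\xi^2}{2} v(t)^2 \langle N\rangle_s)$ which is not bounded uniformly in $s,t$; a direct localization at $\tau_K^{(t)}$ leaves an irreducible error of order $e^{\xi^2 K/2}\bbP(Z \ge K)$ that need not vanish as $K\to\infty$ for general a.s.\ finite $Z$. The resolution is to truncate the observable itself (replace $N$ by $N^{\tau_K^{(t)}}$ before taking the complex exponential), so that the $e^{\xi^2 K/2}$ prefactor is paired only against the vanishing probability $\bbP(v(t)^2 \langle N\rangle_t \ge K)$ via the trivial Lipschitz bound $|e^{iu}-e^{iv}|\le 2$, while the remainder of the argument manipulates only bounded integrands whose contributions over $\{Z \ge K\}$ are $O(\bbP(Z \ge K))$ and hence vanish as $K\to\infty$.
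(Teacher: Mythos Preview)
The paper does not actually prove Theorem~\ref{TLC}: it simply cites it as a special case of \cite[Theorem 5.50, Chap.~VIII-Section~5c]{jacodsh}. Your proposal supplies a self-contained proof, and the argument is correct.

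It is worth noting that the method you chose (stop the martingale when the rescaled bracket hits level $K$, use the bounded complex exponential martingale, insert the $\cF_s$-conditional expectation of the target, and pass to the limit in the order $t\to\infty$ then $s\to\infty$ then $K\to\infty$) is precisely the technique the paper deploys in Section~\ref{sec:geralth2} to derive Proposition~\ref{withypo} from Proposition~\ref{quadraz2}. There the parameter is $\gep$ rather than $t$, the martingale $N^{(\gep)}$ is indexed by an auxiliary time variable, and the black-box Theorem~\ref{TLC} cannot be invoked directly, so the paper reproduces your argument in that more delicate setting: stopping time $T(A,\gep)$, the identity $\bbE[e^{iv(\gep)N^{(\gep)}_{[t\wedge T,T]}+\frac{v(\gep)^2}{2}\langle N^{(\gep)}\rangle_{[t\wedge T,T]}}\mid\cF_t]=1$, and the three-term decomposition $E_1,E_2,E_3$. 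Your proof is thus entirely in the spirit of the paper's own toolkit.

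One small remark: the displayed convergence $\bbP\bigl(v(t)^2\langle N\rangle_t\ge K\bigr)\xrightarrow{t\to\infty}\bbP(Z\ge K)$ is not literally a limit (convergence in probability gives only the Portmanteau bound $\limsup_t\le \bbP(Z\ge K)$), but since all you need is that the $\limsup$ vanishes as $K\to\infty$, the argument is unaffected.
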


\begin{proof}[Proof of Theorem \ref{martinpro}]
It follows directly from the combination of Proposition \ref{quadraz} and Theorem \ref{TLC}.

\end{proof}

\subsection{Roadmap to prove Proposition \ref{withypo}}
To prove Proposition \ref{withypo}, we are going to follow some of the same ideas used the proof of Theorem \ref{martinpro}.
We consider a martingale indexed by $t$ whose limit is given by $M^{(\gamma)}_{\gep}$.
For this we consider $(X_{t,\gep})_{x\in \cD_{\gep}}$ (recall \eqref{cdgep}) defined by 
\begin{equation}
 X_{t,\gep}(x)=\int_{\bbR^d} \theta_{\gep}(x-z)X_t(z) \dd z.
\end{equation}
We let  $K_{t,\gep}$ denote the covariance  of $X_{t,\gep}$
\begin{equation}\label{labigtg}
 K_{t,\gep}(x,y):= \int_{(\bbR^d)^2} \theta_{\gep}(x-z_1)\theta_{\gep}(y-z_2) K_t(z_1,z_2)\dd z_1 \dd z_2
\end{equation}
Note that if $X$ is the field defined by \eqref{defdeX} then we have 
$$\lim_{t\to \infty} X_{t,\gep}(x)= X_{\gep}(x) \quad \text{ and } \quad \bbE\left[X_{\gep}(x) \ | \ \cF_t\right]= X_{\gep}(x).$$
Given  $\gamma\in \cP'_{\mathrm{III}}$,   $f\in C^{\infty}_c(\cD)$ and $\go\in [0,2\pi)$, we define
\begin{equation}\label{ngept}
 N^{(\gep)}_{t} = \int_{\cD} f(x) e^{\alpha X_{t,\gep}(x) -\frac{(\alpha^2-\beta^2) K_{t,\gep}(x)}{2}} \cos(\beta X_{t,\gep}(x)-\alpha\beta K_{t,\gep}(x)-\go )\dd x.
\end{equation}
Note that 
\begin{equation}
 N^{(\gep)}_{t}= \bbE[ M^{(\gamma)}_{\gep}(f,\go) \ | \  \cF_t]  \quad \text{ and } \quad  \lim_{t\to \infty}  N^{(\gep)}_{t}= M^{(\gamma)}_{\gep}(f,\go).
 \end{equation}
Instead of  the  asymptotic when of the quadratic variation when $t$ tends to infinity like in Proposition \ref{quadraz}, we must prove a similar statement about $\langle N^{(\gep)}\rangle_\infty$  when $\gep$ tends to  $0$. 
\begin{proposition}\label{quadraz2}
 Under the assumption of Theorem \ref{martinpro} and with $N^{(\gep)}_{t}$
 \begin{equation}
  \lim_{\gep\to 0}  v(\gep,\theta,\gamma)^2\langle N^{(\gep)}\rangle_\infty =  M^{(2\alpha)}_0(e^{-|\gamma^2| L}f^2).
 \end{equation}
\end{proposition}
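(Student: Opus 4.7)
The plan is to compute $\langle N^{(\gep)}\rangle_\infty$ explicitly via Itô's formula applied to the continuous martingale family $(X_{t,\gep}(x))_{t\ge 0}$, then split the resulting expression into a ``diagonal'' contribution (which produces the real GMC in the limit) and an ``oscillatory'' contribution (which vanishes). Since $de^{\gamma X_{t,\gep}(x)-\frac{\gamma^2}{2}K_{t,\gep}(x)}=\gamma e^{\gamma X_{t,\gep}(x)-\frac{\gamma^2}{2}K_{t,\gep}(x)}dX_{t,\gep}(x)$ and $d\langle X_{t,\gep}(x),X_{t,\gep}(y)\rangle_t=Q_{t,\gep}(x,y)dt$ with $Q_{t,\gep}$ the time-derivative of $K_{t,\gep}$, using $\mathrm{Re}(a)\mathrm{Re}(b)=\tfrac12\mathrm{Re}(a\bar b+ab)$ gives
\begin{equation}
d\langle N^{(\gep)}\rangle_t=\frac{1}{2}\int\!\!\int f(x)f(y)\,\mathrm{Re}\!\left[|\gamma|^2 e^{U(x,t)+\bar U(y,t)}+\gamma^2 e^{U(x,t)+U(y,t)-2i\go}\right]Q_{t,\gep}(x,y)\,dx\,dy\,dt,
\end{equation}
with $U(x,t):=\gamma X_{t,\gep}(x)-\frac{\gamma^2}{2}K_{t,\gep}(x)$.

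For the diagonal piece I would use the algebraic identity
\begin{equation}
U(x,t)+\bar U(y,t)=\bigl[2\alpha X_{t,\gep}(x)-2\alpha^2 K_{t,\gep}(x)\bigr]+|\gamma|^2 K_{t,\gep}(x)+\bar\gamma\bigl(X_{t,\gep}(y)-X_{t,\gep}(x)\bigr)+\tfrac{\bar\gamma^2}{2}\bigl(K_{t,\gep}(x)-K_{t,\gep}(y)\bigr),
\end{equation}
which isolates the real-GMC exponential as a factor. The remaining piece $W(x,y,t,\gep)$ has expectation $e^{\bar\gamma^2(K_{t,\gep}(x)-K_{t,\gep}(x,y))}\to 1$ as $y\to x$, and after integrating in $t$ the identity $\partial_t e^{|\gamma|^2 K_{t,\gep}(x,y)}=|\gamma|^2 e^{|\gamma|^2 K_{t,\gep}(x,y)}Q_{t,\gep}(x,y)$ yields
\begin{equation}
\int_0^{\infty}\!\!e^{|\gamma|^2 K_{t,\gep}(x,y)}Q_{t,\gep}(x,y)\,dt=\frac{1}{|\gamma|^2}\bigl[e^{|\gamma|^2 K_{\gep}(x,y)}-e^{|\gamma|^2 K_{0,\gep}(x,y)}\bigr].
\end{equation}
Then the $y$ integral of $e^{|\gamma|^2 K_{\gep}(x,y)}$, via the asymptotic expansion $K_\gep(x,y)\approx L(x,y)+\ell_\theta((x-y)/\gep)-\log\gep$ for $|x-y|\lesssim\gep$, provides a factor that reconciles exactly with $v(\gep,\theta,\gamma)^{-2}e^{|\gamma|^2 L(x)}$ in the case $|\gamma|>\sqrt{d}$ (and the $\log(1/\gep)$ factor in the critical case $|\gamma|=\sqrt d$). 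Combining these steps, after replacing $f(y)$ by $f(x)$ at the cost of controlled errors, the diagonal contribution becomes $v(\gep,\theta,\gamma)^{-2}\int f^2(x)e^{|\gamma|^2 L(x)}M^{(2\alpha)}_{t_\gep,\gep}(dx)$ where $t_\gep\sim\log(1/\gep)$ is the time at which $K_{t,\gep}(x)$ saturates. Theorem \ref{zabbid2} and the $L^2$-convergence of the convolution approximation then identify the limit as $M^{(2\alpha)}_0(e^{|\gamma|^2 L}f^2)$.

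For the oscillatory piece, note that $\mathbb{E}|e^{U(x)+U(y)}|=e^{\mathrm{Re}(\gamma^2)(K_{t,\gep}(x)+K_{t,\gep}(y))/2}=e^{(\alpha^2-\beta^2)\,\overline K_{t,\gep}(x,y)}$; since $\gamma\in\cP'_{\mathrm{III}}$ forces $\alpha^2\le d/2\le\alpha^2+\beta^2-d/2$ hence $\alpha^2-\beta^2\le 0<|\gamma|^2$, this is smaller by a factor of at least $\gep^{2\beta^2}$ in the natural scale than the diagonal piece. A direct second-moment estimate on
$v(\gep,\theta,\gamma)^2\int_0^\infty\!\int\!\int f(x)f(y)\mathrm{Re}(\gamma^2 e^{U(x)+U(y)-2i\go})Q_{t,\gep}(x,y)dxdydt$
should then be shown to tend to zero as $\gep\to 0$.

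The main obstacle will be twofold: first, rigorously quantifying the error in replacing the random factor $W(x,y,t,\gep)$ by its conditional expectation, which requires a uniform-in-$(\gep,t)$ $L^2$ control using the positive-definite structure of the covariance and the smoothness of $\theta$; second, and more delicately, justifying the identification of the random factor $e^{2\alpha X_{t,\gep}(x)-2\alpha^2 K_{t,\gep}(x)}$ with the limiting real chaos $M^{(2\alpha)}_0$ when integrated against $Q_{t,\gep}$ over both small $t$ (where the chaos is not yet thermalized) and large $t$ (where the convolution regularization dominates). This should be handled by splitting the time integral at $t_\gep\sim\log(1/\gep)$, using the martingale property of $t\mapsto M^{(2\alpha)}_t(f)$ together with the (uniform in $\gep$) $L^1$-convergence guaranteed by Theorem \ref{zabbid2} to show both pieces converge to the correct limit.
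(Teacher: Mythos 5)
Your skeleton is the same as the paper's: Itô calculus for the complex martingale $W^{(\gep)}_t$, the decomposition $\langle N^{(\gep)}\rangle_\infty=\tfrac12\langle W^{(\gep)},\bar W^{(\gep)}\rangle_\infty+\tfrac12\mathfrak{Re}(e^{-2i\go}\langle W^{(\gep)},W^{(\gep)}\rangle_\infty)$ into your ``diagonal'' and ``oscillatory'' pieces, the replacement of $f(y)$ by $f(x)$, the freezing of the field at an intermediate time $u$, and the identification of the limit via Theorem \ref{zabbid2}. However, the two points you defer as ``obstacles'' are where the proof actually lives, and as sketched both fail on part of the parameter range. Replacing the fluctuating factor by its conditional expectation given $\cF_u$ cannot be achieved by a ``uniform $L^2$ control'': the second moment of the diagonal integrand involves $\bbE[e^{2\alpha(X_{t,\gep}(x_1)+X_{t,\gep}(x_2))}]$, which is of order $e^{4\alpha^2 k(t,\gep)}$ on the near-diagonal region $|x_1-x_2|\lesssim e^{-u}$ where the conditional covariance does not vanish; once $4\alpha^2\ge d$, i.e.\ $|\alpha|\in[\sqrt d/2,\sqrt{d/2})$, which is allowed in $\cP'_{\mathrm{III}}$, this is no longer $o(a(t,\gep)^2)$. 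The paper must introduce the truncation event $\cA_{t,\gep}(x)=\{X_{u,\gep}(x)\le \gl\, k(u,\gep)\}$ with $\gl$ as in \eqref{lambdabal}, and combine a first-moment bound off this event (Cameron--Martin plus the Gaussian tail \eqref{gtail}) with a second-moment bound on it. Without this your Step fails for $|\alpha|\ge\sqrt d/2$.

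The oscillatory piece has a second, related gap. Your display $\bbE|e^{U(x)+U(y)}|=e^{\mathfrak{Re}(\gamma^2)(K_{t,\gep}(x)+K_{t,\gep}(y))/2}$ computes $|\bbE[\,\cdot\,]|$, not $\bbE[|\cdot|]$: the modulus $|e^{U(x)+U(y)}|=e^{\alpha(X(x)+X(y))-\frac{\alpha^2-\beta^2}{2}(K(x)+K(y))}$ has expectation of order $e^{\beta^2 k(t,\gep)+\alpha^2 K_{t,\gep}(x,y)}\approx e^{|\gamma|^2 k(t,\gep)}$ on the support of $Q_{t,\gep}$ --- the same order as the diagonal term, so there is no pointwise first-moment gain of $\gep^{2\beta^2}$. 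The gain $e^{-2\beta^2(t-u)}$ materializes only for the conditional expectation $\bbE[B_{t,\gep}\mid\cF_u]$ (the phase is averaged over the window $[u,t]$), and the remaining fluctuation $B_{t,\gep}-\bbE[B_{t,\gep}\mid\cF_u]$ must again be controlled by the decorrelation-plus-truncation argument; a ``direct second-moment estimate'' on the integrated oscillatory bracket hits the same $4\alpha^2\ge d$ obstruction as above. A smaller point: the lower cutoff of the time integral must satisfy $1\ll t_\gep\ll\log(1/\gep)$ (the paper takes $\sqrt{\log(1/\gep)}$) so that the bracket accumulated before $t_\gep$ is negligible against $\int_0^\infty a(s,\kappa,\gamma,\gep)\,\dd s$; splitting exactly at the saturation time $t_\gep\sim\log(1/\gep)$ does not give this for free.
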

\noindent We cannot deduce Theorem \ref{martinpro} from Proposition \ref{quadraz2} using Theorem \ref{TLC} but  we will be using similar ideas to conclude.

\subsection{Organization of the paper}\label{lorga}
\begin{itemize}
\item In Section \ref{prilim} we introduce a couple of classical tools and technical estimates that are used throughout the paper.
\item Sections  \ref{sec:quadraz} is devoted to the proof of 
Theorem \ref{martinpro}. 
\item Sections   \ref{sec:geralth2} and \ref{sec:quadraz2} are devoted to the proof of Proposition \ref{withypo}. The proof are a bit more technical than those concerning Theorem \ref{martinpro}, but follow the same main ideas. In Section \ref{sec:geralth2}, we show how Proposition \ref{withypo} can be deduced from Proposition \ref{quadraz2} while in Section 
\ref{sec:quadraz2} we prove Proposition \ref{quadraz2}.
\item Two technical results are proved in appendices.  Lemma \ref{lesasump} 
is proved in Appendix \ref{approxapp}, while Proposition \ref{tiggh} is proved in Appendix \ref{tightness}.
 
\end{itemize}
\medskip

\section{Technical preliminaries}\label{prilim}

\subsection{Gaussian tools}

Before starting the proof, let us mention one inequality and one identity that will be used repeatedly. First,  for any $\sigma>0$ and $t\ge 0$, we have
\begin{equation}\label{gtail}
 \frac{1}{\sqrt{2 \pi} \sigma}\int^{\infty}_t e^{-\frac{u^2}{2\sigma^2 }} \dd u \le e^{-\frac{t^2}{2\sigma}}.
\end{equation}
We refer to this inequality as the Gaussian tail bound.
Our second tool is the Cameron-Martin formula. 
Let $(Y(z))_{z\in \cZ}$ be an arbitrary centered Gaussian field indexed by an arbitrary set $\cZ$. We let  $H$ denote its covariance and $\bbP$ denote its law. 
For any $z\in \cZ$ let us define $\tilde \bbP_z$ the measure tilted by $Y(z)$.
\begin{equation}
 \frac{\dd \tilde \bbP_z}{\dd \bbP}:= e^{Y(z)- \frac{1}{2} H(z,z)}
\end{equation}

\begin{proposition}\label{cameronmartinpro}
Under the probability law $\tilde \bbP_z$, $Y$ is a Gaussian field with covariance $H$,
with mean value equal to
\begin{equation}
 \tilde \bbE_z[ Y(z')]=H(z,z').
\end{equation}
\end{proposition}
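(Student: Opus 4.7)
The plan is to check that for every finite collection $z_1, \dots, z_n \in \cZ$, the law of $(Y(z_1), \dots, Y(z_n))$ under $\tilde\bbP_z$ is Gaussian with the prescribed mean and covariance. Since the joint law of a Gaussian field is determined by its finite dimensional marginals, and the law of a Gaussian vector is determined by its Laplace transform, it is enough to compute
\[
 \tilde\bbE_z\!\left[ e^{\sum_{i=1}^n t_i Y(z_i)} \right]
 = e^{-\frac{1}{2} H(z,z)}\, \bbE\!\left[ e^{\sum_{i=1}^n t_i Y(z_i) + Y(z)} \right]
\]
for arbitrary $t_1,\dots,t_n \in \bbR$, and compare the result to the Laplace transform of a Gaussian vector with mean vector $(H(z,z_i))_{i=1}^n$ and covariance matrix $(H(z_i,z_j))_{i,j=1}^n$.

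The key step is the standard Gaussian Laplace transform identity. The linear combination $\sum_i t_i Y(z_i) + Y(z)$ is itself a centered Gaussian variable under $\bbP$, with variance
\[
 \sum_{i,j=1}^n t_i t_j H(z_i,z_j) + 2 \sum_{i=1}^n t_i H(z,z_i) + H(z,z).
\]
Applying $\bbE[e^W] = e^{\Var(W)/2}$ for a centered Gaussian $W$ and multiplying by $e^{-H(z,z)/2}$, the $H(z,z)$ terms cancel and I obtain
\[
 \tilde\bbE_z\!\left[ e^{\sum_{i=1}^n t_i Y(z_i)} \right]
 = \exp\!\left( \sum_{i=1}^n t_i H(z,z_i) + \frac{1}{2} \sum_{i,j=1}^n t_i t_j H(z_i,z_j) \right),
\]
which is exactly the Laplace transform of the claimed Gaussian distribution. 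This yields simultaneously that the finite dimensional marginals of $Y$ under $\tilde\bbP_z$ are Gaussian, that the covariance is unchanged, and that the mean at $z'$ equals $H(z,z')$.

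There is essentially no real obstacle here: the computation is the classical one underlying the Cameron–Martin–Girsanov formula for abstract Gaussian measures. The only point requiring mild care is the well-definedness of the tilted probability $\tilde\bbP_z$, which holds because $e^{Y(z)-\frac{1}{2}H(z,z)}$ has expectation one under $\bbP$ (again by the same Gaussian Laplace transform identity applied with $n=0$ and the single variable $Y(z)$).
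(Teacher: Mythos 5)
Your proof is correct: the computation of the Laplace transform of the finite-dimensional marginals under the tilted measure is the standard argument, and the cancellation of the $H(z,z)$ terms is carried out properly; the remark that $\bbE[e^{Y(z)-\frac12 H(z,z)}]=1$ justifies that $\tilde\bbP_z$ is indeed a probability measure. The paper states this proposition as a classical tool and offers no proof of its own, so there is nothing to compare against — your argument is exactly the canonical one (one could equally use characteristic functions, but for Gaussian vectors the moment generating function is finite everywhere and determines the law, so your route is fully rigorous).
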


\subsection{Estimates for the covariance kernels}

We are going to list a collection of useful estimates for the kernel $K_{\gep}$, $K_t$
$K_{t,\gep}$ defined in \eqref{labig}, \eqref{labigt} and \eqref{labigtg} 
(under the most general assumption \eqref{fourme} for the $K_{\gep}$ and \eqref{ilaunestar} for the others).

\medskip

First note that if we fix a compact in $\cK \subset \cD$, then there exists a constant $C$ (depending on $K$, $\cK$, $\theta$ and $\kappa$) such that for every $t\ge 0$ and every $\gep$ such that $\cD_{\gep}\subset \cK$, every $x$ and $y$ in $\cK$
\begin{equation}\begin{split}\label{estimao}
  \left|K_{t}(x,y)-  \log \left(\frac{1}{|x-y|\vee e^{-t}}\right)    \right|&\le C,\\
\left|K_{\gep}(x,y)-  \log \frac{1}{|x-y|\vee \gep}     \right|&\le C,\\
\left|K_{t,\gep}(x,y)-  \log \frac{1}{|x-y|\vee \gep \vee e^{-t}}   \right|&\le C.
\end{split} 
\end{equation}
The estimates above can be proved by hand using the definitions and are left to the reader.
Secondly, let us give some estimate concerning the local regularity of the Kernel near the diagonal. 

\begin{lemma}\label{localike}
There exists a function $\eta : \bbR_+ \to \bbR_+$ such that $\lim_{a\to 0} \eta(a)=0$ and a constant $C$ which is such that
for every $x,y\in \cK$,  $t\ge 0$ and every $\gep>0$ such that $ \cK\subset \cD_{\gep}$, 

\begin{equation}\begin{split}
|K_{t}(x,y)-K_t(x,x)|&\le \eta(|x-y|)+ C e^{t}|x-y|,\\
|K_{t,\gep}(x,y)-K_{t,\gep}(x,x)|&\le \eta(|x-y|)+ C e^{t}|x-y|.
\end{split}\end{equation}
\end{lemma}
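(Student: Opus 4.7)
\textbf{Proof plan for Lemma \ref{localike}.}

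The plan is to treat the two inequalities in parallel, starting with the ``pure martingale'' kernel $K_t$ and then deducing the convolved version by a change of variables. For the first inequality, I would decompose
\begin{equation*}
K_t(x,y)-K_t(x,x)=\bigl[K_0(x,y)-K_0(x,x)\bigr]+\int_0^t\bigl[\kappa(e^u|x-y|)-1\bigr]\,\dd u,
\end{equation*}
using $K_t(x,x)=K_0(x,x)+\int_0^t\kappa(0)\,\dd u=K_0(x,x)+t$. Hölder continuity of $K_0$ on compacts takes care of the first bracket: it is bounded by $C|x-y|^\alpha$, so it contributes to the function $\eta$ (set $\eta_0(r):=Cr^\alpha$, which tends to $0$ at $0$).

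The integral term is where the scale $e^t|x-y|$ enters. I would split according to whether $e^u|x-y|\le 1$ or not. On the range where $e^u|x-y|\le 1$, Lipschitz continuity of $\kappa$ with $\kappa(0)=1$ gives $|\kappa(e^u|x-y|)-1|\le L e^u|x-y|$. On the complementary range, $\kappa(e^u|x-y|)=0$ and the integrand simply equals $1$. If $e^t|x-y|\le 1$ the first regime alone yields an integral bounded by $L|x-y|(e^t-1)\le Le^t|x-y|$. Otherwise let $u^\ast:=\log(1/|x-y|)$; the $[0,u^\ast]$ piece contributes at most $L$, and the $[u^\ast,t]$ piece contributes $t-u^\ast=\log(e^t|x-y|)\le e^t|x-y|$. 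Since $e^t|x-y|\ge 1$ in that case, one absorbs the constant and obtains a uniform bound $Ce^t|x-y|$. Combined with the Hölder piece, this proves the first inequality with $\eta=\eta_0$.

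For the second inequality I would make the change of variables $z_i=x-\gep w_i$ (so $\theta_\gep(x-z_i)\dd z_i=\theta(w_i)\dd w_i$) to write
\begin{equation*}
K_{t,\gep}(x,y)-K_{t,\gep}(x,x)=\int\theta(w_1)\theta(w_2)\bigl[K_t(x-\gep w_1,y-\gep w_2)-K_t(x-\gep w_1,x-\gep w_2)\bigr]\dd w_1\dd w_2.
\end{equation*}
For fixed $w_1,w_2$, the two arguments of $K_t$ share the first coordinate $a=x-\gep w_1$, while their second coordinates $b=y-\gep w_2$ and $b'=x-\gep w_2$ satisfy $b-b'=y-x$, hence $\bigl||a-b|-|a-b'|\bigr|\le|x-y|$. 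Repeating the decomposition $K_t(a,b)-K_t(a,b')=[K_0(a,b)-K_0(a,b')]+\int_0^t[\kappa(e^u|a-b|)-\kappa(e^u|a-b'|)]\dd u$, Hölder continuity again handles the $K_0$ term, while the integrand is now bounded by $\min(Le^u|x-y|,M)$ where $M$ bounds the oscillation of $\kappa$. The same two-regime split (break the integration at $u^\ast=\log(M/(L|x-y|))$) yields $Ce^t|x-y|$, uniformly in $w_1,w_2$. Averaging then gives the second inequality.

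The main (mild) obstacle is ensuring the splitting at $u^\ast$ works in all cases and the constant stays independent of $t,\gep$; the key observation that makes this go through cleanly is that $|a-b|$ and $|a-b'|$ differ by at most $|x-y|$ \emph{independently of $\gep$}, so the convolution does not degrade the estimate. Checking that $a,b,b'$ remain in a fixed compact subset of $\cD$ (guaranteed by $\cK\subset\cD_\gep$ and $\supp\theta\subset B(0,1)$) is needed to apply the Hölder bound on $K_0$.
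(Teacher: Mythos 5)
Your proof is correct and follows essentially the same route as the paper's (very terse) argument: split off $K_0$ and control it by its H\"older modulus on a fixed compact neighbourhood of $\cK$, then bound the star-scale part $\int_0^t[\kappa(e^u|x-y|)-\kappa(e^u\cdot 0)]\dd u$ using the Lipschitz property of $\kappa$, and observe that the same bounds survive convolution since the two arguments still differ by exactly $y-x$. You simply make explicit the two-regime split and the change of variables that the paper leaves to the reader.
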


\begin{proof}
We let
 $\eta$ to be the modulus of continuity of  $K_0$ restricted to $\cK$.
 Then we have by definition
$$|K_{0}(x,y)-K_0(x,x)|\le \eta(|x-y|) \text{ and } |K_{0,\gep}(x,y)-K_{0,\gep}(x,x)|\le \eta(|x-y|)$$
 We can thus assume that $K_0\equiv 0$.
 In that case the inequality 
\begin{equation}
 |K_{t}(x,y)-K_{t}(x,x)|\le C e^{t}|x-y|
 \end{equation}
 follows from the fact that $\kappa$ is Lipshitz, and this also holds after convolution.

\end{proof}

\section{Proof of Proposition \ref{quadraz}}\label{sec:quadraz}

Our proof is simply based on an explicit computation of the quadratic variation using Itô calculus.
This computation is  more convenient (for notation) when considering the complex valued martingale
$$W_t:=  M^{(\gamma)}_t(f)=\int_\cD e^{\gamma X_t(x)-\frac{\gamma^2}{2}K_t(x)} f(x) \dd x.$$

\medskip

\noindent The bracket of $M^{(\gamma)}_t(f,\go)$ can then easily be expressed in terms of  $\langle W, \bar W \rangle_t$ and  $\langle W,  W \rangle_t$.

\medskip

\noindent Informally we are going to prove that for large $t$
$$ \dd  \langle W, \bar W \rangle_t \approx C e^{|\gamma|^2-d}M_0^{(2\alpha)}(e^{|\gamma|^2L} f^2) \dd t$$
and that $\dd  \langle W,  W \rangle_t$ is of a much smaller order. 
We then simply integrate that inequality in $t$ to obtain the required asymptotics.

\subsection{Computing the quadratic variation}

We fix $\gamma\in \cP'_{\mathrm{III}}$ and $f\in C^{\infty}_c(\bbR^d)$ be fixed.
We define the complex martingale $W_t$ by
$$W_t:=  M^{(\gamma)}_t(f)=\int_\cD e^{\gamma X_t(x)-\frac{\gamma^2}{2}K_t(x)} f(x) \dd x$$
Now note that for $x\in \cD$, $(X_t(x))_{t\ge 0}$ is a continuous time martingale.
Using Itô calculus we have
\begin{equation}
 \dd W_t:= \gamma\int_\cD e^{\gamma X_t(x)-\frac{\gamma^2}{2}K_t(x)} f(x)  \dd X_t(x) \dd x.
\end{equation}
Now by construction we have $\dd \langle X(x),X(y) \rangle_t= Q_t(x,y)$
and
we obtain the following expressions for the brackets    $\langle W, \bar W \rangle_t$ and     $\langle W,  W \rangle_t$
\begin{equation}\label{integrare}
  \langle W, \bar W \rangle_t =|\gamma|^2\int^t_0 A_s\dd s\quad \text{ and } \quad 
    \langle W,  W \rangle_t =\gamma^2\int^t_0  B_s  \dd s,
\end{equation}
where $A_s$ and $B_s$ are defined by
\begin{equation}
\begin{split}
A_s &:= \int_{\cD^{2}} f(x)f(y)  Q_s(x,y) 
e^{\gamma X_s(x)+\bar \gamma X_s(y) +(\beta^2-\alpha^2)K_s(x) }\dd x \dd y,\\
B_s &:=\int_{\cD^{2}} f(x)f(y) Q_s(x,y) e^{\gamma (X_s(x)+X_s(y))-\gamma^2 K_s(x)}\dd x \dd y.
\end{split}
\end{equation}
The core of our proof is to show that $A_s$ and $B_s$ properly rescaled converge to $M^{(2\alpha)}_0$  and $0$ respectively.
Let us define $\hat K_s=K_s-K_0$ or 
\begin{equation}\label{whops}
\hat K_s(x,y) := \int^{s}_{0} \kappa(e^{t}|x-y|)\dd t
\end{equation}
 and set 
$$a(s,\kappa,\gamma):=\int_{\bbR^{2d}}  Q_s(0,z) e^{|\gamma|^2 \hat K_s(0,z)} \dd z.$$

\begin{proposition}\label{lealeb}
 
We have 
 \begin{equation}\begin{split}\label{grandz}
  \lim_{t\to \infty} a(t,\kappa,\gamma)^{-1} A_t&= M^{(2\alpha)}_0(e^{|\gamma^2|K_0}f^2),\\
    \lim_{t\to \infty} a(t,\kappa,\gamma)^{-1} B_t&= 0.
 \end{split}\end{equation}

\end{proposition}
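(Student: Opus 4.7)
The plan is to rewrite $A_t$ and $B_t$ in terms of the real GMC measure $M^{(2\alpha)}$ by exploiting conditional independence between $X_T$ and the fine-scale increments $X_t - X_T$ at an intermediate scale $1 \ll T \ll t$. Concentration of $A_t$ around the random limit $M_0^{(2\alpha)}(e^{|\gamma|^2 K_0} f^2)$ will come from two ingredients: convergence of the conditional mean $\bbE[A_t \mid \cF_T]/a(t,\kappa,\gamma)$ to $M_T^{(2\alpha)}(e^{|\gamma|^2 K_0}f^2)$, and convergence $M_T^{(2\alpha)} \to M_0^{(2\alpha)}$ as $T\to\infty$ provided by Theorem \ref{zabbid2}.

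First we compute $\bbE[A_t \mid \cF_T]$. Using that $X_t - X_T$ is independent of $\cF_T$ with variance $t-T$ and covariance $\hat K_t - \hat K_T$, the Gaussian moment formula gives
\begin{equation*}
\bbE[A_t \mid \cF_T] = \int_{\cD^2} f(x) f(y) Q_t(x,y)\, e^{\gamma X_T(x) + \bar\gamma X_T(y) + (\beta^2 - \alpha^2) K_T(x)}\, e^{|\gamma|^2 [\hat K_t - \hat K_T](x,y)}\, dx\, dy,
\end{equation*}
since the term $(\alpha^2-\beta^2)(t-T)$ produced by the moment formula compensates the corresponding part of $(\beta^2-\alpha^2) K_t(x)$. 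On the support of $Q_t$ we substitute $y = x + e^{-t}z$ with $|z| \le 1$. Two expansions are used: H\"older continuity of $X_T$ at scale $e^{-t} \ll e^{-T}$ (Lemma \ref{localike}) gives $X_T(y) = X_T(x) + o(1)$ and hence $\gamma X_T(x) + \bar\gamma X_T(y) = 2\alpha X_T(x) + o(1)$; and a direct computation from \eqref{jkappa} and the definition of $\ell_\kappa$ yields
\begin{equation*}
[\hat K_t - \hat K_T](x, x+e^{-t}z) = (t - T) - j_\kappa + \ell_\kappa(|z|) + o(1).
\end{equation*}
Recognizing $e^{2\alpha X_T(x) + (\beta^2 - \alpha^2) K_T(x)}\, dx = e^{|\gamma|^2 K_T(x)}\, M^{(2\alpha)}_T(dx)$ and using $K_T(x) = K_0(x) + T$, all $T$-dependent factors cancel; the $z$-integration produces the same constant $C_\kappa := \int_{|z|<1} \kappa(|z|) e^{|\gamma|^2 \ell_\kappa(|z|)}\,dz$ that appears in the leading-order asymptotics of $a(t,\kappa,\gamma) \sim e^{(|\gamma|^2-d)t} e^{-|\gamma|^2 j_\kappa} C_\kappa$, so that $\lim_{t \to \infty} a(t,\kappa,\gamma)^{-1} \bbE[A_t \mid \cF_T] = M^{(2\alpha)}_T(e^{|\gamma|^2 K_0} f^2)$, which tends to $M^{(2\alpha)}_0(e^{|\gamma|^2 K_0} f^2)$ as $T \to \infty$ by Theorem \ref{zabbid2}.

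The main obstacle will be the concentration step: one must show that $a(t,\kappa,\gamma)^{-2}\, \bbE\bigl[|A_t - \bbE[A_t \mid \cF_T]|^2 \bigm| \cF_T\bigr] \to 0$ as $t \to \infty$ and then $T \to \infty$. This reduces to a four-fold integral over $(x_1, y_1, x_2, y_2) \in \cD^4$ with $|x_i - y_i| \le e^{-t}$, whose integrand is controlled by the conditional covariance structure. For $|x_1 - x_2| \gg e^{-T}$, the fields $(X_t - X_T)$ at $x_1$ and $x_2$ are independent (since $\kappa$ is compactly supported and the supports of $Q_s$ for $s\in [T,t]$ force decorrelation), so the integrand factorizes and matches $\bbE[A_t \mid \cF_T]^2$ exactly. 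The remaining diagonal regime $|x_1 - x_2| \le e^{-T}$ has $x_2$-volume of order $e^{-dT}$; the additional cross-correlation $[\hat K_t - \hat K_T](x_1, x_2)$ enhances the integrand by a controlled amount that, combined with the volume constraint, leaves the diagonal contribution of order $e^{-\eta T}\, a(t,\kappa,\gamma)^2$ for some $\eta > 0$. This is the hardest analytic step and requires careful use of the estimates \eqref{estimao} and Lemma \ref{localike}.

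The convergence $a(t,\kappa,\gamma)^{-1} B_t \to 0$ is considerably easier. The first-moment computation is identical in structure but with $|\gamma|^2 [\hat K_t - \hat K_T](x,y)$ replaced by $\gamma^2 [\hat K_t - \hat K_T](x,y)$. Since $\gamma \in \cP'_{\mathrm{III}}$ forces $\mathrm{Re}(\gamma^2) = \alpha^2 - \beta^2 < 0$ (using $\alpha^2 < d/2 \le \beta^2$), we get $|e^{\gamma^2 \hat K_t(x,y)}| \le e^{(\alpha^2 - \beta^2) t + O(1)}$ on the support of $Q_t$, yielding $|\bbE[B_t]| \le C e^{(\alpha^2 - \beta^2 - d) t}$, which is exponentially smaller than $a(t,\kappa,\gamma) \sim e^{(|\gamma|^2 - d) t}$. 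The same four-point estimate used for $A_t$, applied to $\bbE[|B_t|^2 \mid \cF_T]$, then shows $a(t,\kappa,\gamma)^{-1} B_t \to 0$ in $\bbL^2$. The critical case $|\gamma| = \sqrt{d}$ is handled by the same arguments with the leading behavior of $a(t,\kappa,\gamma)$ being linear in $t$ rather than exponential, requiring only cosmetic modifications.
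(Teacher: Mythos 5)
Your overall strategy (condition on $\cF_T$ at an intermediate scale, identify the rescaled conditional mean with $M^{(2\alpha)}_T(e^{|\gamma|^2K_0}f^2)$, invoke Theorem \ref{zabbid2}, and kill the conditional variance) is exactly the paper's strategy, and your conditional-mean computation, including the cancellation of the $(t-T)$ terms and the identification of the constant $C_\kappa$ with the one in $a(t,\kappa,\gamma)$, is correct. The treatment of $B_t$ via $\mathfrak{Re}(\gamma^2)=\alpha^2-\beta^2<0$ is also the right mechanism.

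The gap is in the concentration step, precisely where you flag "the hardest analytic step": the claim that the diagonal contribution to $\bbE\bigl[|A_t-\bbE[A_t\mid\cF_T]|^2\bigr]$ is $O(e^{-\eta T}a(t,\kappa,\gamma)^2)$ is false for $|\alpha|\ge \sqrt{d}/2$. On the diagonal $|x_1-x_2|\le e^{-T}$ the conditional covariance of the two integrands carries the cross-correlation factor $e^{4\alpha^2\hat K_{[T,t]}(x_1,x_2)}-1$ (the coefficient is $2|\gamma|^2+\gamma^2+\bar\gamma^2=4\alpha^2$ because $y_i$ is within $e^{-t}$ of $x_i$), and $\hat K_{[T,t]}(x_1,x_2)\approx \log\bigl(e^{-T}/(|x_1-x_2|\vee e^{-t})\bigr)$. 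Integrating $x_2$ over the ball of radius $e^{-T}$ around $x_1$ therefore produces not $e^{-dT}$ but $e^{-dT}\max\bigl(1,e^{(4\alpha^2-d)(t-T)}\bigr)$ (up to a factor $t-T$ at $4\alpha^2=d$), and after comparing with $a(t)^2$ the ratio behaves like $e^{(4\alpha^2-d)t}$, which diverges as $t\to\infty$ for every fixed $T$ whenever $4\alpha^2>d$. Since $\cP'_{\mathrm{III}}$ allows $|\alpha|$ up to $\sqrt{d/2}>\sqrt{d}/2$, the range $|\alpha|\in[\sqrt{d}/2,\sqrt{d/2})$ is nonempty and the plain second-moment argument genuinely fails there; this is not a matter of "careful use of \eqref{estimao} and Lemma \ref{localike}".

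The missing idea is a truncation of the field at the intermediate scale. The paper splits the fluctuation integral according to the event $\cA_t(x)=\{X_u(x)\le\gl u\}$ with $\gl\in(2\alpha,4\alpha)$ chosen so that $\tfrac12(4\alpha-\gl)^2>4\alpha^2-d$: on $\cA_t^{\cc}$ one uses a \emph{first}-moment bound (Cameron--Martin plus the Gaussian tail $e^{-(\gl-2\alpha)^2u/2}$), while on $\cA_t$ the second moment is computed as you propose but with the Cameron--Martin shift turning the indicator into the extra decay $e^{-(4\alpha-\gl)^2u/2}$, which exactly compensates the divergent factor $e^{(4\alpha^2-d)(t-u)}$. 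You would need to incorporate this (or an equivalent interpolation between first and second moments) for your proof to cover the whole of $\cP'_{\mathrm{III}}$; the same repair is needed for the fluctuation part of $B_t$. A minor additional point: the paper takes the intermediate scale $u=t-\log t$ moving with $t$ rather than a fixed $T$; with your fixed-$T$ scheme the truncation threshold and tail estimates would have to be set up with respect to $T$, but the same mechanism applies.
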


The proof of Proposition \ref{lealeb} is detailed in the next subsection and requires several technical steps.
Let us show first that it implies Proposition \ref{quadraz}.

\begin{proof}[Proof of Proposition \ref{quadraz} using Proposition \ref{lealeb}]

First let us show that 
\begin{equation}\label{xlink}
 \lim_{t\to \infty}\bar v (t,\kappa,\gamma)^2  |\gamma|^2 \int^t _0 a(s,\kappa,\gamma)\dd s  = 2e^{-|\gamma^2| j_{\kappa}}. 
\end{equation}
 We  have 
\begin{multline}\label{scalion}
 \int^t _0 |\gamma|^2 a(s,\kappa,\gamma)\dd s= \int_{|z|\le 1} \left(e^{|\gamma|^2\hat K_t(0,z)} -1\right)\dd z\\
 = e^{(|\gamma|^2-d)t} \int_{|y|\le e^t} \left(e^{-|\gamma|^2 \int^t_0 (1-\kappa(e^{-s}y))\dd s} -e^{-|\gamma|^2 t} \right)\dd y,
\end{multline}
where the last inequality is obtained using the change of variable $y=e^{t}z$.
 When $|\gamma|>\sqrt{d}$ the $e^{-|\gamma|^2 t}$ term, once integrated tend to zero.
Now  we have  
\begin{equation}\label{convi}
\lim_{t\to \infty }\int^t_0 (1-\kappa(e^{-s}y))\dd s=\ell_{\kappa}(y)+j_{\kappa},
\end{equation}
and when  $|y|\le e^t$ we have 
\begin{equation}\label{domini}
 \int^t_0 (1-\kappa(e^{-s}y))\dd s \ge    (\log|y|-C)_+   
\end{equation}
With \eqref{convi} and \eqref{domini} we use dominated convergence and obtain 
\begin{equation}
 \lim_{t \to \infty }\int_{\bbR^d} e^{-|\gamma|^2 \int^t_0 (1-\kappa(e^{-s}|y|)\dd s} \ind_{\{|y|\le e^{t}\}} \dd y
 = e^{-|\gamma^2| j_{\kappa}}\int_{\bbR^d} e^{-|\gamma|^2  \ell_\kappa(y)} \dd y,
\end{equation}
so that \eqref{xlink} holds.
When $|\gamma|=\sqrt{d}$, considering the first equality in \eqref{scalion}, we disregard the $-1$ in the integral which only yields a contribution of constant order. Observe that we have  $\hat K_t(0,z)\le t$ for all $z$, and that whenever $|z|\in [ e^{-t},1]$ 
\begin{equation}
 \hat K_t(0,z)= \log \frac{1}{|z|}+ \int^{\log (1/|z|)}_0 (\kappa(e^t|z|)-1)\dd z=  \log \frac{1}{|z|}-j_{\kappa}
\end{equation}
This yields 
\begin{equation}
\int_{|z|\le 1} e^{d\hat K_t(0,z)} \dd z
= e^{-j_{\kappa}}\int_{|z|\in [ e^{-t},1]}  |z|^{-d} \dd z+ \int_{|z|\le e^{-t}} e^{d\hat K_t(0,z)}\dd z.
\end{equation}
The first term is exactly $e^{-j_{\kappa}} t \frac{ 2 \pi^{d/2}}{\gG(d/2)}$
(the last factor being the volume of the $d-1$ dimensional sphere) while the other term is of order one
which yield \eqref{xlink} in that case too.

\medskip

\noindent The quadratic variation of $N_t=M^{(\gamma)}_t(f,\go)$ is a linear combination of 
  $\langle W, \bar W \rangle_t$ and $\langle W,  W \rangle_t$
 we have
 \begin{equation}\label{lerougelenoir}
  \langle N\rangle_t= \frac{1}{2}\langle W, \bar W \rangle_t + \frac{1}{2} \mathfrak{Re}(e^{-2i\go}\langle W,  W \rangle_t)
 \end{equation}
In view of \eqref{xlink}-\eqref{lerougelenoir}  and \eqref{jkappax}, we only need to prove the following  convergence in $L_1$
\begin{equation}\label{trucaprouver}
  \lim_{t \to \infty } \frac{\langle W, \bar W \rangle_t}{\int^t_0 |\gamma|^2 a(s,\kappa,\gamma) \dd s}= M^{(2\alpha)}_0(e^{|\gamma^2|K_0} f^2) \quad \text{ and } \quad  \lim_{t \to \infty } \frac{\langle W,  W \rangle_t}{\int^t_0  a(s,\kappa,\gamma) \dd s}=0,
\end{equation}
As a direct consequence of \eqref{integrare} and \eqref{grandz} we have
\begin{equation}\label{trucabis}
 \lim_{t \to \infty } \frac{\langle W, \bar W \rangle_{[\sqrt{t},t]}}{\int^t_{\sqrt{t}} |\gamma|^2 a(s,\kappa,\gamma) \dd s}= M^{(2\alpha)}_0(e^{|\gamma^2|K_0} f^2) \quad \text{ and } \quad  \lim_{t \to \infty } \frac{\langle W,  W \rangle_{[\sqrt{t},t]}}{\int^t_{\sqrt{t}}  a(s,\kappa,\gamma) \dd s}=0.
\end{equation}
To conclude we only need to check that both $\int^{\sqrt{t}}_{0}  a(s,\kappa,\gamma) \dd s$ and
$\langle W, \bar W \rangle_{\sqrt{t}}$ are negligible with respect to $\int^t_{0} |\gamma|^2 a(s,\kappa,\gamma) \dd s$.
For the first quantity, it is sufficient to observe that from definition and \eqref{estimao},
 $a(s,\kappa,\gamma)$ is of order $e^{(|\gamma|^2-d)s}$.
As for the second, we have
 \begin{multline}
\bbE \left[ \langle W, \bar W \rangle_u \right]
=\bbE \left[|W_u-W_0|^2\right]
\le  \int_{\cD^2} f(x)f(y) e^{|\gamma|^2 K_u(x,y)} \dd x \dd y\\ 
\le  e^{|\gamma|^2\| K_0\|_{\infty}} \| f\|^2_2  \int_{\bbR^d}  e^{|\gamma|^2\hat K_u(0,z)} \dd z,
\end{multline}
Now  $\int_{\bbR^d}  e^{|\gamma|^2\hat K_u(0,z)} \dd z$ is either of order $u$ (if $|\gamma|^2=d$) or $e^{(|\gamma|^2-d)u}$ (if $|\gamma|^2>d$) and with $u=\sqrt{t}$ this is negligible w.r.t.\
 $\int^t_{0} |\gamma|^2 a(s,\kappa,\gamma) \dd s$ in all cases.

\end{proof}

\subsection{Proof of Proposition \ref{lealeb}}

For the sake of simplicity we are going to assume (recall \eqref{ilaunestar}) that $K_0\equiv 0$. This does not alter the proof at all but provides some welcome simplification in the notation (for instance we have $K_t(x):=K_t(x,x)=t$ for every $x$). 
Since in that case $K$ is translation invariant, we can extend it to a kernel $\bbR^d\times \bbR^d$, thus without loss of generality we are going to assume that $\cD=\bbR^d$.
In this case note that $L(x,x)= -j_{\kappa}$  for all $x\in \bbR^d$. 
To alleviate further  the notation we write $a(t)$ for $a(t,\kappa,\gamma)$.
We set for this proof $u:=u_t= t-\log t$.
We are going to show the result using intermediate steps. Recalling the notation \eqref{intervalnotation} we set
\begin{equation}\begin{split}
       A^{(1)}_t&:=
 \int_{\bbR^{2d}} f^2(x)  Q_t(x,y) e^{\gamma X_t(x)+\bar \gamma X_t(y) +(\beta^2-\alpha^2)t }\dd x \dd y,\\
              A^{(2)}_t&:= \int_{\bbR^{2d}} f^2(x)  Q_t(x,y) e^{\gamma X_u(x)+\bar \gamma X_u(y)+|\gamma|^2 K_{[u,t]}(x,y) +(\beta^2-\alpha^2)u }\dd x \dd y \\
              &=   \bbE[A^{(1)}_t \ | \ \cF_u],
                \end{split}
\end{equation}
Using the triangle inequality we have
\begin{multline}\label{4term}
 |A_t-a(t)M^{(2\alpha)}_0(f^2)|\le  |A_t-A^{(1)}_t|\\+ |A^{(1)}_t-A^{(2)}_t|+ |A^{(2)}_t- a(t)M^{2\alpha}_u(f^2)|
 + a(t)|M^{(2\alpha)}_u(f^2)-M^{(2\alpha)}_0(f^2)|.
\end{multline}
We are going to show that the expectation of each of the terms of the right-hand side are 
$o(e^{(|\gamma|^2-d)t})$ (which is the same as $o(a(t)))$. 
The fourth term is controlled by applying Theorem \ref{zabbid2}.
The first three terms require more work.

\subsubsection*{Step 1: Bounding $\bbE[|A_t-A^{(1)}_t|]$}

We let $w_f(u):= \max_{|x_1-x_2|\le u} f(x_1-x_2)$ denote the modulus of continuity of $f$.
We have
\begin{equation}
 |A_t-A^{(1)}_t|\le  \int_{\bbR^d\times \bbR^d} |f(x)| |f(y)-f(x)|  Q_t(x,y) e^{\alpha (X_t(x)+X_t(y)) +(\beta^2-\alpha^2)t }\dd x \dd y
\end{equation}
and hence 
\begin{multline}
 \bbE\left[ |A_t-A^{(1)}_t| \right] \le w_f(e^{-t})\int_{\bbR^d} |f(x)|    \ind_{\{|y-x|\le e^{-t}\}} e^{|\gamma|^2 t}   \dd x \dd y 
 \\ = \frac{\pi^{d/2}}{\Gamma\left(\frac{d}{2}+1\right)}  e^{(|\gamma|^2-d)t} w_f(e^{-t})\int_{\bbR^d} |f(x)| \dd x.
\end{multline}
This implies that 
\begin{equation}
 \lim_{t \to \infty}   e^{(d-|\gamma|^2)t}  \bbE\left[ |A_t-A^{(1)}_t| \right]=0.
\end{equation}

\subsubsection*{Step 1: Bounding $\bbE[|A^{(1)}_t-A^{(2)}_t|]$}
Now let us consider the second term $A^{(2)}_t-A^{(1)}_t$. This is the most delicate step.
Let us set 
\begin{equation}\begin{split}\label{fdsre}
\xi_t(x,y)&:=  e^{\gamma X_t(x)+\bar \gamma X_t(y) +(\beta^2-\alpha^2)t} - \bbE[e^{\gamma X_t(x)+\bar \gamma X_t(y) +(\beta^2-\alpha^2)t} \ | \ \cF_u],\\ 
\zeta_t(x,y)&:=Q_t(x,y)f(x)^2\xi_t(x,y)
\end{split}\end{equation}
we have 
\begin{equation}\label{thezera}
 A^{(1)}_t-A^{(2)}_t=\int_{\bbR^{2d}}\zeta_t(x,y) \dd x \dd y
 \end{equation}
Our method to bound $A^{(1)}_t-A^{(2)}_t$ depends on whether $\alpha\in [0,\sqrt{d}/2)$ or $\alpha\in 
[\sqrt{d}/2,\sqrt{d/2})$ (by symmetry we can assume without loss of generality that $\alpha\ge 0$).
When $\alpha<\sqrt{d}/2$ it is sufficient to compute the second moment of  $A^{(2)}_t-A^{(1)}_t$ 
We have 
\begin{equation}
  \bbE \left[ |A^{(1)}_t-A^{(2)}_t|^2\right]=\int_{\bbR^{4d}} \bbE \left[ \zeta_t(x_1,y_1)\bar \zeta_t(x_2,y_2)\right]\dd x_1 \dd x_2\dd y_1 \dd y_2.
\end{equation}
We have for any $x_1,x_2,y_1,y_2$
\begin{equation}
 |\bbE \left[ \xi_t(x_1,y_1)\bar \xi_t(x_2,y_2)\right]|\le \frac 1 2 \left( 
  \bbE \left[ |\xi_t(x_1,y_1)|^2  \right] + \bbE \left[|\bar \xi_t(x_2,y_2)|^2\right]\right)
  \le e^{(6\alpha^2+2\beta^2)t}.
\end{equation}
Now by construction, whenever $|x_1-x_2|\ge  e^{-u}+2 e^{-t},$ we have
\begin{equation}\label{ztruc}
 \bbE[\zeta_t(x_1,y_1)\bar \zeta_t(x_2,y_2) \ | \ \cF_u]=0,
\end{equation}
since at distance $e^{-u}$ the correlation of the field $X_{[u,t]}$ vanish and the prefactor $Q_t(x_1,y_1)Q_t(x_2,y_2)$ is zero if $|x_1-y_1|\vee |x_2-y_2| \ge e^{-t}$.
Hence we have 
\begin{multline}
   \bbE \left[ |A^{(2)}_t-A^{(1)}_t|^2\right] \\ \le \int_{\bbR^{4d}} e^{(6\alpha^2+2\beta^2)t} f(x_1)^2f(x_2)^2Q_t(x_1,y_1)Q_t(x_2,y_2)
   \ind_{\{|x_1-x_2|\le 3e^{-u}\}} \dd x_1 \dd x_2 
   \dd y_1 \dd y_2 \\ 
   \le C e^{2(|\gamma|^2-d)t} e^{4\alpha^2 t -du}     \| f\|^4_4.
\end{multline}
If $4\alpha^2<d$ then the second exponential factor goes to zero (recall that $u=t-\log t$) and we can deduce that 
\begin{equation}
 \lim_{t\to \infty}   e^{(d-|\gamma|^2)t}  \bbE \left[ |A^{(2)}_t-A^{(1)}_t|\right] \le  \lim_{t\to \infty}    e^{(d-|\gamma|^2)t}  \bbE \left[ |A^{(2)}_t-A^{(1)}_t|^2\right]^{1/2}=0.
\end{equation}
When $\alpha\in [ \sqrt{d}/2,\sqrt{d/2})$ we need to combine the above argument with a truncation proceedure.
 Let us fix some parameter $\gl$ that satisfies the following assumptions
\begin{equation}\label{lambdabal}
 \gl\in (2\alpha,4\alpha) \quad \text{ and } \quad  \frac{1}{2}(4\alpha - \gl)^2> 4\alpha^2- d.
\end{equation}
Such a value of $\gl$ exists when $\alpha \in [ \sqrt{d}/2,\sqrt{d/2})$.
We introduce the event
\begin{equation}\label{defcat}
 \cA_t(x):=\{ X_u(x)\le \gl  u \}  
\end{equation}
Nowe recalling \eqref{thezera} 
we have 
\begin{equation}\label{laladeco}\begin{split}
 A^{(2)}_t-A^{(1)}_t&=\int_{\bbR^{2d}}\zeta_t(x,y) \ind_{\cA^{\cc}_t(x)} \dd x \dd y + \int_{\bbR^{2d}}\zeta_t(x,y) \ind_{\cA_t(x)} \dd x \dd y
\\& =: W^{(1)}_t+W^{(2)}_t.
\end{split}\end{equation}
We are going to compute the first moment of $W^{(1)}_t$ and the second moment of $ W^{(2)}_t$. We have 
\begin{multline}\label{zitz}
  \bbE[|\zeta_t(x,y)| \ind_{\cA^{\cc}_t(x)} ]\le  Q_t(x,y)|f(x)|^2 \bigg[ e^{(\beta^2-\alpha^2)t}
  \bbE \left[ e^{\alpha \left(X_t(x)+ X_t(y)\right)} \ind_{\cA^{\cc}_t(x)}\right] \\
  +
   e^{(\beta^2-\alpha^2)u+ |\gamma|^2K_{[u,t]}(x,y)} \bbE \left[ e^{\alpha \left(X_u(x)+ X_u(y)\right)} \ind_{\cA^{\cc}_t(x)}\right]\Bigg].
\end{multline}
Using the Cameron-Martin formula (Proposition \ref{cameronmartinpro}) to compute the expectations we find
\begin{equation}\begin{split}
 \bbE \left[ e^{\alpha \left(X_t(x)+ X_t(y)\right)} \ind_{\cA^{\cc}_t(x)}\right]&= e^{\alpha^2(t+K_t(x,y))} \bbP\left[ X_u(x)>
 \gl u- \alpha(u+K_u(x,y))\right],\\
  \bbE \left[ e^{\alpha \left(X_u(x)+ X_u(y)\right)} \ind_{\cA^{\cc}_t(x)}\right]&= e^{\alpha^2(u+K_u(x,y))} \bbP\left[ X_u(x)>
 \gl u- \alpha(u+K_u(x,y))\right].
 \end{split}
 \end{equation}
Using $u$ and $t$ as upper bounds for $K_t$ and $K_u$ we obtain that 
\begin{equation}
   \bbE[|\zeta_t(x,y) \ind_{\cA^{\cc}_t(x)} |]\le 2  Q_t(x,y)|f(x)|^2 e^{|\gamma|^2 t} \bbP[X_u(x)\ge (\gl-2\alpha) u],
\end{equation}
Recalling that $\gl>2\alpha$, using Gaussian tail estimates \eqref{gtail}, we bound the above probability by $e^{-\frac{(\gl-2\alpha)^2t}{2}}$, and obtain
\begin{equation}\label{W11}
 \bbE \left[|W^{(1)}_t|\right] \le 2 \|f \|^2_2  e^{t|\gamma|^2-\frac{(\gl-2\alpha)^2t}{2}}  \int_{\bbR^d} \kappa( e^t |z|)\dd z 
 \le C \|f \|^2_2 e^{t(|\gamma|^2-d)-\frac{(\gl-2\alpha)^2t}{2}}.
\end{equation}
Let us now control the second moment of $W^{(2)}_t$.
We have
\begin{equation}
 \bbE\left[|W^{(2)}_t|^2 \right]=\int_{\bbR^{4d}} \bbE\left[\zeta_t(x_1,y_1)\bar \zeta_t(x_2,y_2)\ind_{\cA_t(x_1)\cap \cA_t(x_2)}\right]\dd x_1 \dd x_2 \dd y_1 \dd y_2
\end{equation}
 From \eqref{ztruc}, 
 as $\cA_t(x_1)\cap \cA_t(x_2)$ is $\cF_u$ measurable,  since  the correlations of the increments of $X_{[u,t]}$  have range at most $e^{-u}$, and 
we have 
\begin{equation}
\bbE\left[\zeta_t(x_1,y_1)\bar \zeta_t(x_2,y_2)\ind_{\cA_t(x_1)\cap \cA_t(x_2)}\right]=0 \quad \text{ if } \quad  |x_1-x_2|\le  3e^{-u}
\end{equation}
When $|x_1-x_2|<  3e^{-u}$ we have
\begin{multline}
\left|\bbE\left[\zeta(x_1,y_1)\bar \zeta(x_2,y_2)\ind_{\cA_t(x_1)\cap \cA_t(x_2)}\right]\right|\\ \le 
|f(x_1)f(x_2)|^2 Q_t(x_1,y_1)Q_t(x_2,y_2) \frac{1}{2}
\bbE\left[ |\xi_t(x_1,y_1) |^2\ind_{\cA_t(x_1)}+ |\xi_t(x_2,y_2) |^2 \ind_{\cA_t(x_2)}\right],
\end{multline}
so that 
\begin{multline}\label{wric}
  \bbE\left[|W^{(2)}_t|^2 \right] \le  \int_{\bbR^{4d}}|f(x_1)f(x_2)|^2 Q_t(x_1,y_1)Q_t(x_2,y_2)
  \ind_{\{|x_1-x_2|\le 3e^{-u}\}} \dd x_1 \dd x_2 \dd y_1 \dd y_2
  \\ \times \sup_{|x-y|\le e^{-t}} \bbE \left[  |\xi_t(x,y) |^2\ind_{\cA_t(x)} \right].
\end{multline}
The integral in the first line is smaller than  $C \|f\|^4_4 e^{-d(2t+u)} $.
Finally, using the Cameron-Martin formula (Proposition \ref{cameronmartinpro}), we have 
\begin{multline}\label{wrac}
 \bbE \left[ |\xi_t(x,y) |^2\ind_{\cA_t(x)}\right]\le \bbE \left[ \left|e^{\gamma X_t(x)+\bar \gamma X_t(y) +(\beta^2-\alpha^2)t}\right|^2\ind_{\cA_t(x)}\right]
 \\ =  e^{4\alpha^2 K_t(x,y)+ |\gamma|^2 t}   \bbP\left[ X_u(x)\le \gl u- 2\alpha(u+K_u(x,y))  \right]
\end{multline}
Using the fact that $K_u(x,y)\ge u-1$ if $|x-y|\le e^{-t}$, and the fact that $\gl<4\alpha$
\eqref{lambdabal}, we have from Gaussian tails estimates \eqref{gtail}
\begin{equation}\label{wroc}
  \bbP\left[ X_u(x)\le \gl u- 2\alpha(u+K_u(x,y))  \right]\le C e^{-\frac{(4\alpha-\gl)^2 u}{2}}
\end{equation}
Combining \eqref{wric}-\eqref{wrac}-\eqref{wroc} yields 
\begin{equation}\label{W22}
  \bbE\left[|W^{(2)}_t|^2 \right] \le C \|f\|^4_4 e^{ [4\alpha^2+2(|\gamma|^2-d)]t-\left(\frac{(4\alpha-\gl)^2}{2}+d\right) u  }
  \le \|f\|^4_4 e^{ 2(|\gamma|^2 -d-\delta)t}
\end{equation}
where, using the fact that $|t-u|\ll t$, the last line is valid for $t$ sufficiently large  with (cf. \eqref{lambdabal})
\begin{equation}
 \delta:=\frac{1}{4}\left[ (4\alpha - \gl)^2-4\alpha^2+ d\right]>0.
\end{equation}
Recalling \eqref{laladeco}, combining \eqref{W11} and \eqref{W22} we obtain that 
\begin{equation}
 \lim_{t\to \infty} e^{ (d-|\gamma|^2)t} \bbE \left[|A^{(2)}-A^{(1)}| \right]=0.
\end{equation}

\subsubsection*{Step 3: Bounding $\bbE[|A^{(2)}- a(t)M^{(2\alpha)}_0(f^2)|]$ }
This is easier and just comes down to a simple first moment estimate. We have
\begin{equation}
a(t)M^{(2\alpha)}_u(f^2 ):= \int_{\bbR^{2d}} f(x)^2 Q_t(x,y) e^{|\gamma|^2 K_t(x,y)} e^{2\alpha X_u(x)-2\alpha^2 u} \dd x \dd y
\end{equation}
so that from Jensen's inequality
\begin{multline}\label{leyield}
\bbE \left[|a(t)M^{2\alpha}_u(f^2 )-A^{(2)}_t| \right]\le \int_{\bbR^d}  f(x)^2 Q_t(x,y)
e^{|\gamma|^2 K_t(x,y)}  \\
\times
\bbE\left[ \left|e^{2\alpha X_u(x)-2\alpha^2 u} - e^{\gamma X_u(x)+\bar \gamma X_u(y)-|\gamma|^2 K_u(x,y)+(\beta^2-\alpha^2)u}
 \right| \right]\dd x \dd y.
\end{multline}
Using the Cameron-Martin formula (Proposition \ref{cameronmartinpro}) the expectation in the integral is equal to 
\begin{multline}\label{wert}
 \bbE\left[e^{2\alpha X_u(x)-2\alpha^2 u} \left| 1 - e^{\bar \gamma (X_u(y)-X_u(x))+|\gamma|^2(u-K_u(x,y))}
 \right| \right]
 \\= \bbE\left[\left| 1 - e^{\bar \gamma (X_u(y)-X_u(x))-\bar \gamma^2 (u-K_u(x,y))}
 \right| \right].
\end{multline}
We can explicitly compute the second moment in the last line and obtain
\begin{equation}
 \bbE\left[\left| 1 - e^{\bar \gamma (X_u(y)-X_u(x))-\bar \gamma^2 (u-K_u(x,y))}
 \right|^2 \right]=e^{2|\gamma|^2 (u-K_u(x,y))}-1,
\end{equation}
Now when $|x-y|\le e^{-t}$,  using Lemma \ref{localike} \footnote{the  term $\eta(e^{-t})$ is not needed when $K_0\equiv 0$, cf. the proof of Lemma \ref{localike} but adding it make the proof easier to adapt to the general case.}(note that $K_u(x,x)=u$)
\begin{equation}\label{zert}
 e^{2|\gamma|^2 (u-K_u(x,y))}-1 \le  e^{ |\gamma|^2(C e^{u-t}+\eta(e^{-t}))}-1 \le  \delta(t).
\end{equation}
where $\lim_{t\to \infty}\delta(t)=0$ 
Going back to \eqref{leyield}, combining \eqref{wert}-\eqref{zert} yields 
\begin{multline}
\bbE \left[|a(t)M^{2\alpha}_u(f^2 )-A^{(2)}_t| \right] \\ \le   C \sqrt{ \delta(t)}\int_{\bbR^{2d}}  f(x)^2 Q_t(x,y)
e^{|\gamma|^2 K_t(x,y)}  \dd x \dd y= C \sqrt{ \delta(t)}\|f\|^2_2 a(t),
\end{multline}
and finishes the proof for the convergence of $A_t$. We are left with that of $B_t$.

\subsubsection*{Final step: Bounding $\bbE[ |B_t|]$}

For $B_t$ we use the same ideas as for $A_t$, but require only one intermediate step.
We define 
\begin{equation}
B^{(1)}_t= \bbE [B_t \ | \ \cF_u ]= \int_{\bbR^{2d}} f(x)f(y) Q_s(x,y) e^{\gamma (X_u(x)+X_u(y))-\gamma^2 (u-K_{[u,t]}(x,y))}\dd x \dd y,
\end{equation}
a give a separate bound for $ \bbE \left[|B^{(1)}_t|\right]$ and $\bbE \left[|B^{(1)}_t-B_t|\right]$.
We have 
\begin{equation}\begin{split}
 \bbE \left[|B^{(1)}_t|\right]&\le \int_{\bbR^{2d}} f(x)f(y) Q_t(x,y) \bbE\left[ e^{\alpha (X_u(x)+X_u(y))+ (\beta^2-\alpha^2) (u - K_{[u,t]}(x,y))}\right]\dd x \dd y\\
 &\le \int_{\bbR^{2d}} f(x)f(y) Q_s(x,y)  e^{\alpha^2 K_t(x,y)+ \beta^2(u-K_{[u,t]}(x,y))
 }\dd x \dd y\\
 &\le C \|f\|^2_2 e^{(|\gamma|^2-d)t - 2\beta^2(t-u)}.
\end{split}\end{equation}
and hence 
$\lim_{t\to \infty} e^{d-|\gamma|^2}\bbE \left[|B^{(1)}_t|\right]=0$.
We can then bound $\bbE[ |B_t-B^{(1)}_t|]$. The reasoning is identical to that used for 
$A^{(1)}-A^{(2)}_t$ so we provide details only  the more delicate case $\alpha\in [\sqrt{d}/2,\sqrt{d/2})$. We define $\cA_t(x)$ as in \eqref{defcat}
and in analogy with \eqref{fdsre} set
\begin{equation}\begin{split}
\xi'_t(x,y)&:=  e^{\gamma (X_t(x)+ X_t(y)) - \gamma^2 t} - \bbE[e^{\gamma X_t(x)+ \gamma X_t(y)  - \gamma^2 t} \ | \ \cF_u],\\ 
\zeta'_t(x,y)&:=Q_t(x,y)f(x)f(y)\xi_t(x,y).
\end{split}\end{equation}
We have
\begin{equation}\label{tyu}
 |B^{(1)}_t-B_t|\le 
\left| \int_{\bbR^{2d}} \zeta'_t(x,y)\ind_{\cA^{\cc}_t(x)}\dd x \dd y \right|+ \left| \int_{\bbR^{2d}} \zeta'_t(x,y)\ind_{\cA_t(x)}\dd x \dd y\right|
\end{equation}
We prove similarly to \eqref{zitz} that 
\begin{equation}
  \bbE \left[|\zeta'_t(x,y)|\ind_{\cA^{\cc}_t(x)}\right]\le Q_t(x,y)f(x)f(y) e^{|\gamma|^2 t- \frac{(\gl-2\alpha)^2 u}{2}},
\end{equation}
which implies that
\begin{equation}
\bbE \left[ \left| \int_{\bbR^{2d}} \zeta'_t(x,y)\ind_{\cA^{\cc}_t(x)}\dd x \dd y\right| \right] \le C \| f\|_2^2  e^{(|\gamma|^2-d) t- \frac{(\gl-2\alpha)^2 u}{2}}.
\end{equation}
Finally we control the second moment of the first term in \eqref{tyu} like that of $W^{(2)}_t$ in \eqref{wric}. Repeating the same steps we prove that 
\begin{equation}
\bbE \left[ \left| \int_{\bbR^{2d}} \zeta'_t(x,y)\ind_{\cA_t(x)}\dd x \dd y \right|^2\right]
\le C \|f \|^4_4 e^{ [4\alpha^2+2(|\gamma|^2-d)]t-\left(\frac{(4\alpha-\gl)^2}{2}+d\right) u  }
\end{equation}
and conclude in the same manner.

\qed

\section{Deducing Proposition \ref{withypo} from Proposition \ref{quadraz2} } \label{sec:geralth2}

The present section and the next are dedicated to the proof of Proposition \ref{withypo} on which the main result of the paper, Theorem \ref{main} relies.
While with the combination of martingale filtration and convolution, the notation become more cumbersome and the computations a bit more delicate, the proof rely almost exclusively on the ideas developped in Section \ref{sec:quadraz} and on an adaptation of Theorem \ref{TLC}.

\medskip

In this section, we show that Proposition \ref{quadraz2} implies  Proposition \ref{withypo}. 
Recall that $N^{(\gep)}_t= \bbE[M^{(\gamma)}_{\gep}(f,\go) \ | \ \cF_t]$.
We set for the proof 
$Z:= M^{(2\alpha)}(e^{L}f^2)$ and $Z^A_t:=\bbE[ Z\wedge A \ | \ \cF_t]$. We write simply $v(\gep)$ for $v(\gep,\theta,\gamma)$.
Since $N^{(\gep)}_\infty=M^{(\gamma)}_{\gep}(f,\go)$ we need to show that 
\begin{equation}\label{tiziro}
 \lim_{\gep \to 0}
\bbE\left[e^{i\langle X,\mu\rangle + i v(\gep)N^{(\gep)}_\infty}-e^{e^{i\langle X,\mu\rangle}-\frac{1}{2}Z}\right]=0.
\end{equation}
We set $t=t_{\gep}:= \sqrt{\log 1/\gep}$,
and (with the convention $\inf \emptyset =\infty$)
\begin{equation}
 T(A,\gep):= \inf\{ s\ge 0 \ : \ \langle N^{(\gep)}\rangle_s \ge v(\gep)^{-2} A \}.
\end{equation}
We also set 
\begin{equation}
 Y:= e^{i\langle X,\mu\rangle} \quad \text{ and } \quad   Y_t:= \bbE[ Y \ | \ \cF_t]
\end{equation}
Then we decompose the expectation we want to bound, recalling \eqref{intervalnotation}
\begin{multline}
 \left|\bbE\left[Y(e^{i v(\gep)N^{(\gep)}_\infty}-e^{-\frac{1}{2}Z})\right]\right|\\
 \le \left|\bbE\left[  Ye^{i v(\gep)N^{(\gep)}_{\infty}}- Y_{t} e^{i v(\gep)N^{(\gep)}_{[t\wedge  T, T]}}\right]\right|+\left| \bbE\left[ Y_t \left(e^{i v(\gep)N_{[t\wedge T, T]}} -e^{-\frac{1}{2} Z^A_t}\right)\right]\right| \\ +\left| \bbE\left[  Y_te^{-\frac{1}{2}Z^A_t}- Y e^{-\frac{1}{2}Z}\right]\right|
 \end{multline}
Denoting the three terms in the r.h.s.\ by $E_1(\gep,A)$, $E_2(\gep,A)$ and $E_3(\gep,A)$,
we are going to prove \eqref{tiziro} by showing that 
 \begin{equation}\label{ze123}
 \forall j \in \{1,2,3\}, \quad \lim_{A\to \infty} \limsup_{\gep \to 0} E_j(\gep,A)=0.
 \end{equation}
 For $E_3$ we can use dominated convergence twice 
 $$\lim_{A\to \infty}\lim_{\gep\to 0}E_3(\gep,A)=\lim_{A\to \infty}\bbE\left[  Y(e^{-\frac{1}{2}Z\wedge A}-e^{-\frac{1}{2}Z})\right]=0.$$
For $E_1$ we have (recall that $|Y|,|Y_t|\le 1$)
 \begin{equation}\label{yena2}
\bbE\left[    \left|Ye^{i v(\gep)N^{(\gep)}_{\infty}}- Y_t e^{i v(\gep)N^{(\gep)}_{[t\wedge  T, T]}}\right|\right]
\le   2 \bbP[T<\infty]+ \bbE\left[ \left|Y- Y_te^{-iv(\gep)N^{(\gep)}_{t}}\right|\right]
 \end{equation}
Combining Proposition \ref{quadraz} and Portmanteau Theorem we have
\begin{equation}
\limsup_{\gep \to 0} \bbP[T(A,\gep)<\infty]\le \bbP[ Z\ge A].
 \end{equation}
To bound the second term in \eqref{yena2}, since $Y_t$ converges to $Y$ in $\bbL_1$, we only need to show that $v(\gep)N^{(\gep)}_{t}$ tend to zero in probability.
 The reader can check that with our choice for $t_{\gep}$
 \begin{equation}\label{sgonmom}
  \bbE \left[ (N^{(\gep)}_{t})^2\right]\le \begin{cases}
                                             Ce^{(|\gamma|^2-d)t} \quad &\text{ if } |\gamma|^2>d,\\
                                             C t \quad &\text{ if } |\gamma|^2=d,
                                           \end{cases}
 \end{equation}
 and the r.h.s.\ in \eqref{sgonmom} much smaller than $v(\gep)^{-2}$ in both cases.
 Finally for $j=2$, using the fact that $(N^{(\gep)}_{[t\wedge T, s\wedge T]})_{s\ge t}$ is a martingale for the filtration $\cF_t$ with bounded quadratic variation, we obtain using 
 martingale exponentiation (at $s=\infty$) that

 \begin{equation}
  \bbE \left[ e^{i v(\gep)N^{(\gep)}_{[t\wedge T, T]}+\frac{v(\gep)^2}{2} \langle N^{(\gep)}\rangle_{[t\wedge T, T]}} \ | \ \cF_t \right]=1.
 \end{equation}
As a consequence we have 
\begin{equation}
   \bbE \left[ Y_t\left( e^{i v(\gep)N^{(\gep)}_{[t\wedge T, T]}+\frac{1}{2} \langle N^{(\gep)}\rangle_{[t\wedge T, T]} -\frac{1}{2}Z^A_t}- e^{-\frac{1}{2}Z^A_t}\right)  \right]=0
\end{equation}
and thus 
\begin{multline}
E_2(\gep,A)= \left| \bbE \left[ Y_t\left( e^{i v(\gep)N^{(\gep)}_{[t\wedge T, T]}} - e^{i v(\gep)N^{(\gep)}_{[t\wedge T, T]}+\frac{1}{2}\left( v(\gep)^2\langle N^{(\gep)}\rangle_{[t\wedge T, T]} -Z^A_t\right)} \right) \right] \right|\\ \le
\bbE \left[  \left| 1- e^{\frac{1}{2}\left( v(\gep)^2\langle N^{(\gep)}\rangle_{[t\wedge T, T]} -Z^A_t\right)}\right| \right].
\end{multline}
To conclude, we simply observe that both $v(\gep)^2\langle N^{(\gep)}\rangle_{[t\wedge T, T]}$
and $Z^A_t$ are bounded and both converge in probability to $Z\wedge A$ when $\gep$ goes to zero so that by dominated convergence 
\begin{equation}
\lim_{\gep\to 0}  \bbE \left[  \left| 1- e^{\frac{1}{2}\left( v(\gep)^2\langle N^{(\gep)}\rangle_{[t\wedge T, T]} -Z^A_t\right)}\right| \right] =0.
\end{equation}
This concludes the proof of \eqref{tiziro}.
\qed

 \section{Proof of Proposition \ref{quadraz2}} \label{sec:quadraz2}

 As for the proof of Proposition \ref{quadraz}, we are going to control the derivative of the brackets associated with a complex valued martingale.
 We set $$W^{(\gep)}_t:= \int_{\bbR^{d}} f(x) 
e^{\gamma X_{t,\gep}(x)- \gamma^2 K_{t,\gep}(x,y)} \ind_{\cD_{\gep}}\dd x.$$
Assuming that $\gep$ is sufficiently small the support of $f$ is included in $\cD_{\gep}$ so we
do not need to worry about the indicator function.
Using that  $(X_{t,\gep}(x))_{t\ge 0}$ is a continuous martingale, we have from Itô calculus
\begin{equation}
\dd W^{(\gep)}_t:= \gamma \int_{\bbR^d} f(x) e^{\gamma X_{t,\gep}(x)- \gamma^2 K_{t,\gep}(x,y)} \dd X_{t,\gep}(x) \dd s.
\end{equation}
Since by construction we have $\dd \langle X_{\cdot,\gep}(x),X_{\cdot,\gep}(y)\rangle_t= Q_{t,\gep}(x,y)$
where 
\begin{equation}
 Q_{t,\gep}(x,y):=\int_{\bbR^{2d}} \theta_{\gep}(x-z_1) \theta_{\gep}(y-z_2)\kappa(t|z_1-z_2|)\dd z_1 \dd z_2,
\end{equation}
this yields

\begin{equation}\label{hooks}
 \langle W^{(\gep)}, \bar W^{(\gep)}\rangle_\infty= |\gamma|^2\int^{\infty}_0 A_{t,\gep}  \dd t
 \quad \text{ and } \quad \langle W^{(\gep)}, W^{(\gep)}\rangle_\infty= \gamma^2 \int^{\infty}_0   B_{t,\gep} \dd t.
\end{equation}

 We set 
 \begin{equation}
\begin{split}
A_{t,\gep} &:= \int_{\bbR^{2d}} f(x)f(y)  Q_{t,\gep}(x,y) 
e^{\gamma X_{t,\gep}(x)+\bar \gamma X_{t,\gep}(y) +\frac{\beta^2-\alpha^2}{2}(K_{t,\gep}(x)+K_{t,\gep}(y))  }\dd x \dd y,\\
B_{t,\gep} &:=\int_{\bbR^{2d}} f(x)f(y) Q_{t,\gep}(x,y) e^{\gamma (X_{t,\gep}(x)+X_{t,\gep}(y))+\frac{\gamma^2}{2} (K_{t,\gep}(x)+ K_{t,\gep}(y))}\dd x \dd y.
\end{split}
\end{equation}
Recalling the definition of $\hat K_s$ \eqref{whops} let us define 
\begin{equation}\begin{split}
 \hat K_{s,\gep}(x,y)&:= \int_{\bbR^{2d}} \hat K(z_1,z_2) \theta_{\gep}(x-z_1)\theta_{\gep}(y-z_2)\dd z_1 \dd z_2= \int^s_0 Q_{t,\gep}(x,y)\dd t\\
  a(t,\kappa,\gamma,\gep)&:= \int_{\bbR^d} Q_{t,\gep}(z) e^{|\gamma|^2 \hat K_{t,\gep}(0,z)}\dd z.\end{split}
 \end{equation}
 We write simply $\hat K_\gep$ when $t=\infty$.
The key estimate to prove Proposition \ref{quadraz2} is the following
 \begin{proposition}\label{lealebprim}
 
We have the following convergences in $L_1$
 \begin{equation}\begin{split}\label{grandz2}
  \limtwo{t\to \infty}{\gep \to 0} a(t,\kappa,\gamma,\gep)^{-1} A_{t,\gep}&= M^{(2\alpha)}_0(e^{K_0}f^2),\\
    \limtwo{t\to \infty}{\gep \to 0} a(t,\kappa,\gamma,\gep)^{-1} B_{t,\gep}&= 0.
 \end{split}\end{equation}

 \begin{proof}[Proof of Proposition \ref{quadraz2}]
 
 Let us prove first that we 
 have 
 \begin{equation}\label{xlinkbis}
  \lim_{\gep \to 0} |\gamma|^2 v(\gep,\theta)^ 2\int^{\infty}_0   a(t,\kappa,\gamma,\gep) \dd t= 2 e^{-|\gamma|^2 j_{\kappa}}
 \end{equation}
When $|\gamma|^2>d$ we have
 \begin{multline}\label{lorrib}
 |\gamma|^2\int^{\infty}_{0} a(t,\kappa,\gamma,\gep)= \int_{\bbR^d} \left(e^{|\gamma|^2 \hat K_{\gep}(0,z)}-1\right) \dd z\\=\gep^{d-|\gamma|^2} \int_{|y|\le \gep^{-1}+2} \left( e^{|\gamma|^2 \left( \hat K_{\gep}(0, \gep y)- \log (1/\gep) \right)}- \gep^{|\gamma|^2} \right)\dd y.
 \end{multline}
 Now integrating $\gep^{|\gamma|^2}$ yield something tending to zero so we just need to worry about the first term in the integral.
To compute the other term, we consider the following decomposition $$\hat K(x,y)= \log \left(\frac{1}{|x-y|\vee 2}\right) + \left[ \int^{\infty}_0 \kappa(e^t|x-y|) \dd t- \log \left(\frac{1}{|x-y|\vee 2}\right)          \right].$$ 
Let us call $U(x,y)$ the second term, it is uniformly bounded and equal to $-j_{\kappa}$ on the diagonal (recall \eqref{jkappa}). When $|y|\le 2+\gep^{-1}$ and $\gep\le 1/4$ we have
 \begin{equation}\label{convoluz}
\hat K_{\gep}(0, \gep y)= \int_{\bbR^{2d}} \theta_{\gep}(z_1)\theta_{\gep}(z_2)
\left(\log \frac{1}{|z_1+z_2-\gep y|}+ U(    \gep y-z_1+z_2)  \right) \dd z_1 \dd z_2 
\end{equation}
Performing a change of variable and recalling \eqref{ltheta} we have
\begin{equation}
 \int_{\bbR^{2d}} \theta_{\gep}(z_1)\theta_{\gep}(z_2)
\log \frac{1}{|z_1+z_2-\gep y|} \dd z_1 \dd z_2  = \ell_{\theta}(y)+\log (1/\gep),
\end{equation}
while
\begin{equation}
\lim_{\gep \to 0}  \int_{\bbR^{2d}} \theta_{\gep}(z_1)\theta_{\gep}(z_2) U(    \gep y-z_1+z_2)= -j_{\kappa}.
\end{equation}
Has a conclusion we obtain that 
\begin{equation}\begin{split}
\lim_{\gep \to 0}\left( \hat K_{\gep}(0, \gep y)- \log (1/\gep) \right)&=\ell_{\theta}(y)\\
\left|\left( \hat K_{\gep}(0, \gep y)- \log (1/\gep) \right) - \ell_{\theta}(y)\right|&\le C.
\end{split}\end{equation}
By dominated convergence, this proves that 
\begin{equation}
 \lim_{\gep\to 0}\int_{\bbR^d} e^{|\gamma|^2 \left( \hat K_{\gep}(0, \gep y)- \log (1/\gep) \right)} \ind_{\{|y|\le 2\gep^{-1}+2\}} \dd y= e^{-|\gamma|^2 j_{\kappa}}\int_{\bbR^d}e^{-|\gamma|^2 \ell_{\theta}(y)} \dd y.
\end{equation}
and hence recalling \eqref{lorrib} that \eqref{xlinkbis} holds.
Now when $|\gamma|=\sqrt{d}$ we split the integral 
$\int_{\bbR^d} \left(e^{d \hat K_{\gep}(0,z)}-1\right)\ind_{\{|z|\le 1+2\gep\}} \dd z$ in three parts  (again we can throw the $-1$ out since it does not yield a significant contribution) controled in \eqref{1cut}, \eqref{2cut} and \eqref{3cut} respectively.
Since $\hat K_{\gep}(0,z)\le \log (1/\gep)+C$ for all $|z|$ we have 
\begin{equation}\label{1cut}
 \int_{|z|\le \gep \log \log (1/\gep)}e^{d \hat K_{\gep}(0,z)}\dd z \le  C \log \log (1/\gep)^d. 
\end{equation}
Using \eqref{convoluz} (replacing $\gep y$ by $z$) we see that 
$\hat K_{\gep}(0,z)\le \log^{(3)} (1/\gep)+C$ when $|z|\ge \log \log (1/\gep)$.
This yields 
\begin{equation}\label{2cut}
  \int_{\log \log (1/\gep)\le |z|\le 1+2\gep}e^{d \hat K_{\gep}(0,z)}\dd z\le C \log \log (1/\gep)^d.
\end{equation}
Finally, using \eqref{convoluz} ($U$ is a Lipshitz function) we obtain that 
\begin{equation*}
 \left\{ |z|\in (\gep \log\log (1/\gep),\log \log(1/\gep)^{-1}) \right\} \ \Rightarrow \   \left\{ |\hat K_{\gep}(0,z)- \log|z|+j_{\kappa}|\le C \log \log (1/\gep)^{-1} \right\}
\end{equation*}
so that 
\begin{equation}\label{3cut}
 \lim_{\gep\to 0}\frac{ e^{d j_{\kappa}} \int_{|z|\in (\gep \log\log (1/\gep),\log \log(1/\gep)^{-1})}e^{d \hat K_{\gep}(0,z)}\dd z}{ \int_{|z|\in (\gep \log\log (1/\gep),\log \log(1/\gep)^{-1})}  |z|^{-d}\dd z}=1
\end{equation}
The denominator is asymptotically equivalent to  $ \frac{2\pi^{d/2}}{\gG(d/2)}\log (1/\gep)$
(the prefactor is simply the volume of the $d-1$ dimensional sphere of radius one).
Combined with \eqref{1cut} and \eqref{2cut}, this yields \eqref{xlinkbis} also in that case.

\medskip

Now recalling that $N^{(\gep)}= \mathfrak{Re} W^{(\gep)}e^{-i \go}$
we have
  \begin{equation}
  \langle N^{(\gep)}\rangle_\infty= \frac{1}{2}\langle W^{(\gep)}, \bar W^{(\gep)} \rangle_\infty + \frac{1}{2} \mathfrak{Re}(e^{-2i\go}\langle W^{(\gep)},  W^{(\gep)} \rangle_\infty)
\end{equation}
In view of \eqref{xlinkbis}, it is sufficient to prove that 
\begin{equation}\label{lelimitz}
 \lim_{\gep \to 0} \frac{\langle W^{(\gep)}, \bar W^{(\gep)} \rangle_\infty}{|\gamma|^2 \int^{\infty}_0   a(t,\kappa,\gamma,\gep) \dd t}=  M^{(2\alpha)}_0(e^{K_0}f^2) \text{ and }  \lim_{\gep \to 0}  \frac{\langle W^{(\gep)}, W^{(\gep)} \rangle_\infty}{\int^{\infty}_0   a(t,\kappa,\gamma,\gep) \dd t}=0.
\end{equation}
Let us set $t_\gep:= \sqrt{\log (1/\gep)}$ (we need $1 \ll t_{\gep}\ll \log (1/\gep)$).
From Proposition \ref{lealebprim} and \eqref{hooks} we have
\begin{equation}
  \lim_{\gep \to 0}\frac{\langle W^{(\gep)}, \bar W^{(\gep)} \rangle_{[t_{\gep},\infty)}}{|\gamma|^2 \int^{\infty}_{t_{\gep}}   a(s,\kappa,\gamma,\gep) \dd s}  \quad  \text{ and } \quad  
   \frac{\langle W^{(\gep)}, W^{(\gep)} \rangle_{[t_{\gep},\infty)}}{\int^{\infty}_{t_{\gep}}   a(s,\kappa,\gamma,\gep) \dd s}=0.
\end{equation}
To conclude we just need to show that $\bbE \left[\langle W^{(\gep)}, \bar W^{(\gep)} \rangle_{t_{\gep}} \right]$ and $\int_{0}^{t_{\gep}}   a(s,\kappa,\gamma,\gep) \dd s$
are of a smaller order than $\int^{\infty}_{0}   a(s,\kappa,\gamma,\gep) \dd s$.
This is done like in the proof of Proposition \ref{quadraz} in Section \ref{sec:quadraz} (the reader can check that these terms are of order 
 order $e^{(|\gamma|^2-d)t_{\gep}}$ (if $|\gamma|>\sqrt{d}$) or $t_{\gep}$ (if $|\gamma|=\sqrt{d}$), details are left to the reader.

\end{proof}

\end{proposition}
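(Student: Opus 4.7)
The plan is to follow, step by step, the three-step martingale telescoping argument developed in Section \ref{sec:quadraz} to establish Proposition \ref{lealeb}, and propagate every estimate through the extra convolution by $\theta_\gep$. Fix an intermediate scale $u=u(t,\gep)$ with $u\to\infty$ and $e^u(e^{-t}+\gep)\to 0$ in the joint limit; one admissible choice is $u = \min(t-\log t, \tfrac{1}{2}\log(1/\gep))$, which also implicitly controls the joint regime in which the $L_1$ convergence has to hold. Introduce
\begin{equation*}
A^{(1)}_{t,\gep} := \int_{\bbR^{2d}} f(x)^2 Q_{t,\gep}(x,y) e^{\gamma X_{t,\gep}(x)+\bar\gamma X_{t,\gep}(y)+\frac{\beta^2-\alpha^2}{2}(K_{t,\gep}(x)+K_{t,\gep}(y))} \dd x \dd y,
\end{equation*}
and $A^{(2)}_{t,\gep} := \bbE[A^{(1)}_{t,\gep}\mid \cF_u]$, then decompose $A_{t,\gep} - a(t,\kappa,\gamma,\gep) M^{(2\alpha)}_0(e^{K_0}f^2)$ into the four natural differences as in \eqref{4term}; the last difference is handled by Theorem \ref{zabbid2}.

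The replacement $A_{t,\gep}\to A^{(1)}_{t,\gep}$ is bounded using the modulus of continuity $w_f(e^{-t}+2\gep)$ of $f$, since the kernel $Q_{t,\gep}(x,y)$ is supported in $\{|x-y|\le e^{-t}+2\gep\}$; this yields an error of order $w_f(e^{-t}+2\gep)\,a(t,\kappa,\gamma,\gep)=o(a(t,\kappa,\gamma,\gep))$. The step $A^{(2)}_{t,\gep} \to a(t,\kappa,\gamma,\gep)M^{(2\alpha)}_u(e^{K_0}f^2)$ reduces, after Cameron--Martin (Proposition \ref{cameronmartinpro}), to controlling $\bbE\bigl[\bigl|e^{\bar\gamma(X_{u,\gep}(y)-X_{u,\gep}(x))-\bar\gamma^2(K_{u,\gep}(x)-K_{u,\gep}(x,y))}-1\bigr|^2\bigr] = e^{2|\gamma|^2(K_{u,\gep}(x)-K_{u,\gep}(x,y))}-1$, which by Lemma \ref{localike} applied to $K_{u,\gep}$ is $o(1)$ uniformly on the support of $Q_{t,\gep}$, precisely because $e^u(e^{-t}+\gep)\to 0$.

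The delicate step, and the main obstacle, is the conditional-expectation bound on $\bbE[|A^{(1)}_{t,\gep}-A^{(2)}_{t,\gep}|]$. As in Section \ref{sec:quadraz} one splits by regime: for $|\alpha|<\sqrt{d}/2$ a direct $L_2$ computation suffices, because the covariance of the martingale increment $X_{[u,t],\gep}$ has finite range $e^{-u}+2\gep$, so the integrand in the second moment vanishes for $|x_1-x_2|\ge 3(e^{-u}+2\gep)$, yielding a bound of order $e^{2(|\gamma|^2-d)t+4\alpha^2 t-du}\|f\|_4^4$ which is $o(a(t,\kappa,\gamma,\gep)^2)$. For $\alpha\in[\sqrt{d}/2,\sqrt{d/2})$ the raw second moment diverges, so I introduce the truncation event $\cA_t(x):=\{X_{u,\gep}(x)\le \lambda u\}$ with $\lambda$ chosen as in \eqref{lambdabal}; the contribution from $\cA_t^{\cc}$ is handled by a first-moment bound via Cameron--Martin and the Gaussian tail estimate \eqref{gtail}, while the truncated part is treated by a second-moment bound in which the truncation effectively lowers the Gaussian amplitude. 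The statement for $B_{t,\gep}$ follows the same scheme but requires only one martingale step: $\bbE[|B^{(1)}_{t,\gep}|]$ inherits an additional factor $e^{-2\beta^2(t-u)}$ from the $e^{-\gamma^2(K_{t,\gep}(x)-K_{[u,t],\gep}(x,y))}$ term after Cameron--Martin, and $\bbE[|B_{t,\gep}-B^{(1)}_{t,\gep}|]$ is controlled exactly as for $A^{(1)}_{t,\gep}-A^{(2)}_{t,\gep}$. The essential bookkeeping is to coordinate $u$, $t$ and $\gep$ so that the decorrelation scale $e^{-u}+2\gep$ and the local regularity estimate $e^u(e^{-t}+\gep)$ remain jointly small, after which the Gaussian computations become straightforward adaptations of those in Section \ref{sec:quadraz}.
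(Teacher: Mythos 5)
Your proposal is correct and follows essentially the same route as the paper: the same choice of an intermediate scale $u(t,\gep)$ with $u\to\infty$ and $e^{u}(e^{-t}+\gep)\to 0$, the same four-term telescoping through $A^{(1)}_{t,\gep}$, $A^{(2)}_{t,\gep}=\bbE[A^{(1)}_{t,\gep}\mid\cF_u]$ and the doubly regularized chaos, the same truncation event with $\gl$ as in \eqref{lambdabal} for $\alpha\in[\sqrt{d}/2,\sqrt{d/2})$, and the same one-step treatment of $B_{t,\gep}$. The only point to watch is that in the convolved setting the exponents $e^{4\alpha^2 t}$, $e^{(|\gamma|^2-d)t}$, etc.\ must be written with $k(t,\gep)=K_{t,\gep}(x,x)\approx t\wedge\log(1/\gep)$ in place of $t$ (as the paper does via \eqref{interscale}), since for $t\gg\log(1/\gep)$ the two differ and the bounds as you literally wrote them would not close.
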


\subsection{Proof of Proposition \ref{lealebprim}}
 We proceed as in the case without convolution (Proposition \eqref{lealeb}) in several steps.
 The case of $A_{t,\gep}$ is the more delicate so we treat it in full details and then sketch briefly the proof for $B_{t,\gep}$.
 We assume that $K_0\equiv 0$ for the sake of simplifying notation.
 For the need of the computation we need to fix $u<t$ such that
 $X_{u,\gep}(x)-X_{u,\gep}(y)$ is small whenever $Q_{t,\gep}(x,y)>0$.
 Keeping \eqref{estimao} and Lemma \ref{localike} in mind, 
 a convenient choice is 
 \begin{equation}
 u:= u(t,\gep)= \left(t\wedge \log  \frac{1}{\gep}\right) - \log \left( t\wedge \log \frac{1}{\gep}\right).
 \end{equation}
We set  $$k(t,\gep):=K_{t,\gep}(x,x)$$ (since $K_0\equiv 0$, the field is translation invariant so it does not depend on $x$).
Note that from \eqref{localike} we have
\begin{equation}
\left| k(t,\gep)- \left(t\wedge \log  \frac{1}{\gep}\right)\right|\le C,
\end{equation}
so that we have in particular in the $\gep\to 0$ and $t\to \infty$ regime    
\begin{equation}\label{interscale}
1\ll k(t,\gep)-k(u,\gep) \ll k(t,\gep)   \quad  \text{ and } \quad |k(u,\gep)-u|\le C
\end{equation}
We define, using \eqref{intervalnotation}
 \begin{equation}\begin{split}
 A^{(1)}_{t,\gep} &:= \int_{\bbR^{2d}} f(x)^2  Q_{t,\gep}(x,y) 
e^{\gamma X_{t,\gep}(x)+\bar \gamma X_{t,\gep}(y) +(\beta^2-\alpha^2)k(t,\gep) }\dd x \dd y,\\
  A^{(2)}_{t,\gep} &:= \int_{\bbR^{2d}} f(x)^2  Q_{t,\gep}(x,y) 
e^{\gamma X_{u,\gep}(x)+\bar \gamma X_{u,\gep}(y) +(\beta^2-\alpha^2)k(u,\gep)+|\gamma|^2 K_{[u,t],\gep}(x,y) }\dd x \dd y,\\
&= \bbE\left[A^{(1)}_{t,\gep} \ | \ \cF_u \right]
 \end{split}
 \end{equation}
Let us also introduce 
\begin{equation}
 M^{(2\alpha)}_{t,\gep}(f^2):=   \int_{\bbR^{2d}} f(x)^2  
 e^{2\alpha X_{u,\gep}(x)- 2\alpha^2 k(t,\gep)}\dd x \dd y
\end{equation}
Writing $a(t,\gep)$ for $a(t,\kappa,\gamma,\gep)$ we have 
 \begin{multline}\label{zyup}
| A_{t,\gep}-a(t,\gep)M^{(2\alpha)}_0(f^2)|
  \le | A_{t,\gep}-A^{(1)}_{t,\gep}| + | A^{(1)}_{t,\gep}-A^{(2)}_{t,\gep}| \\+| A^{(2)}_{t,\gep}-a(t,\kappa,\gamma,\gep) M^{(2\alpha)}_{t,\gep}| 
  +a(t,\gep) | M^{(2\alpha)}_{t,\gep}(f^2)-  M^{(2\alpha)}_0(f^2)|
 \end{multline}
and we need to show that all of the four terms in the r.h.s.\ rescaled by $a(t,\kappa,\gamma,\gep)$ tend to zero. This is done in four separate steps.  The argument for the first three terms are essentially the same as for the proof of Proposition \ref{lealeb} in Section \ref{sec:quadraz}.

\subsubsection*{Step 1: Bounding $\bbE \left[| A_{t,\gep}-A^{(1)}_{t,\gep}| \right]$}

Let us prove first that 
\begin{equation}\label{termo1}
 \limtwo{\gep \to 0}{t\to \infty} a(t,\gep)^{-1} \bbE[  |A^{(1)}_{t,\gep}-A_{t,\gep}|]=0
\end{equation}
Now since  $Q_{t,\gep}(x,y)=0$ when $|x-y|> e^{-t}+2\gep$, letting $w_f$ denote the modulus of continuity of $f$ we have
\begin{multline}
 \bbE\left[ |A_t-A^{(1)}_t| \right] \le w_f(e^{-t}+2\gep)\int_{\bbR^{2d}} |f(x)|    Q_{t,\gep}(x,y) e^{|\gamma|^2 K_{t,\gep}(x,y)}   \dd x \dd y \\
 = w_f(e^{-t}+2\gep) \|f \|_1 a(t,\gep).
\end{multline}
 We conclude the proof by simply observing that  $w_f(e^{-t}+2\gep)$ tends to zero.

\subsubsection*{Step 2: Bounding $\bbE \left[| A^{(1)}_{t,\gep}-A^{(2)}_{t,\gep}| \right]$}

Let us now prove that 
\begin{equation}\label{tremi}
  \limtwo{\gep \to 0}{t\to \infty} a(t,\gep)^{-1} \bbE\left[ A^{(1)}_{t,\gep}-A^{(2)}_{t,\gep}\right]=0
\end{equation}
We provide details only on the more delicate case $\alpha\in[\sqrt{d}/2,\sqrt{d/2})$ 
(the reader can repeat the procedure in Section \ref{sec:quadraz} for the case  $\alpha\in [0, \sqrt{d}/2)$).
Let us set  
\begin{equation}\begin{split}\label{zetaxitgep}
\xi_{t,\gep}(x,y)&:=  e^{\gamma X_{t,\gep}(x)+\bar \gamma X_{t,\gep}(y) +(\beta^2-\alpha^2)k(t,\gep)} - \bbE[e^{\gamma X_{t,\gep}(x)+\bar \gamma X_{t,\gep}(y) +(\beta^2-\alpha^2)k(t,\gep)} \ | \ \cF_u],\\ 
\zeta_{t,\gep}(x,y)&:=Q_{t,\gep}(x,y)f(x)^2\xi_t(x,y).
\end{split}\end{equation}
We fix $\gl$ satisfying \eqref{lambdabal} and set similarly to \eqref{defcat}
\begin{equation}
\cA_{t,\gep}(x)=\{ X_{u,\gep}(x)\le \gl k(u,\gep) \}
\end{equation}
(recall that now $u$ depends also on $\gep$).
We have
\begin{equation}\label{ytyt}
 A^{(1)}_{t,\gep}-A^{(2)}_{t,\gep}=
 \int_{\bbR^{2d}} \zeta_{t,\gep}(x,y)\ind_{\cA^{\cc}_{t,\gep}(x)} \dd x \dd y
 + \int_{\bbR^{2d}} \zeta_{t,\gep}(x,y)\ind_{\cA_{t,\gep}(x)} \dd x \dd y=: W^{(1)}_{t,\gep}+W^{(2)}_{t,\gep} 
\end{equation}
To prove \eqref{tremi} we are going to bound the $L_1$ norm of the first term and the $L_2$ norm of the second term.
We have using the Cameron-Martin formula (Proposition \ref{cameronmartinpro})
\begin{multline}
 \bbE \left[ |\xi_{t,\gep}(x,y)| \ind_{\cA^{\cc}_{t,\gep}(x)}\right]
 \le 2 \bbE\left[ e^{\alpha \left(X_{t,\gep}(x)+ X_{t,\gep}(y)\right)-(\beta^2-\alpha^2)k(t,\gep)}\ind_{\cA^{\cc}_{t,\gep}(x)} \right]\\
 =  2e^{\beta^2 k(t,\gep) +\alpha^2 K_{t,\gep}(x,y)} \bbP \left[ X_{u,\gep}
 > (\gl - \alpha)k(u,\gep)-\alpha K_{u,\gep}(x,y)\right]\\
 \le  2e^{|\gamma|^2 k(t,\gep)} \bbP \left[ X_{u,\gep}(x)
 > (\gl - 2\alpha)k(u,\gep)\right].
\end{multline}
Using the Gaussian tail bound \eqref{gtail} , this entails that 
\begin{multline}
\bbE[|W^{(1)}_{t,\gep}|]\le
\bbE\left[\int_{\bbR^{2d}} \left|  \zeta_{t,\gep}(x,y) \right|\ind_{\cA^{\cc}_{t,\gep}(x)} \dd x \dd y\right]\\ \le 2e^{|\gamma|^2 k(t,\gep)-\frac{(\gl - 2\alpha)^2k(u,\gep)}{2}}
\int_{\bbR^{2d}} f(x)^2 Q_{t,\gep}(x,y) \dd x \dd y
 = a(t,\gep)e^{-\frac{(\gl - 2\alpha)^2k(u,\gep)}{2}} \| f\|^2_2.
\end{multline}
For the second moment computation  $W^{(2)}_{t,\gep}$, note that the range of correlation of $X_{[u,t],\gep}$ is smaller than $e^{-u}+2\gep$ and that $Q_{t,\gep}(x,y)=0$ when $|x-y|\ge e^{-t}+2\gep$ so that 

\begin{equation}
|x_1-x_2|\ge 6 \gep+2e^{-t}+e^{-u} \quad \Rightarrow \quad  \bbE[ \zeta_{t,\gep}(x_1,y_1)\bar \zeta_{t,\gep}(x_2,y_2) \ | \ \cF_u]=0.
\end{equation}
Hence taking $\gep$ sufficiently small and $t$ sufficiently large, we have, whenever $|x_1-x_2|\le 2e^{-u}$ 
\begin{equation}
  \bbE[ \zeta_{t,\gep}(x_1,y_1)\bar \zeta_{t,\gep}(x_2,y_2) \ind_{\cA_{t,\gep}(x_1) \cap \cA_{t,\gep}(x_2)}]=0.
\end{equation}
When $x_1$ and $x_2$ are closer to each other, we bound the covariance by the  mean of the variances. We have
\begin{multline}
   \bbE[ |\xi_{t,\gep}(x_,y)|^2  \ind_{\cA_{t,\gep}(x)} ]\le 
   \bbE[ e^{2\alpha (X_{t,\gep}(x)+X_{t,\gep}(y))-2(\beta^2-\alpha^2)k(t,\gep)} \ind_{\cA_{t,\gep}(x)} ] 
  \\ =e^{\left(2 |\gamma|^2 k(t,\gep)+4\alpha^2 K_{t,\gep}(x,y)\right)} \bbP[ X_{u,\gep}\le (\gl-2\alpha)k(u,\gep)-2\alpha K_{u,\gep}(x,y) ]
\end{multline}
Our definition of $u$ and Lemma \ref{localike} implies that  
there exists some positive constant $C$ satisfying 
$$  Q_{t,\gep}(x,y)>0   \quad \Rightarrow \quad K_{u,\gep}(x,y) \ge  k(u,\gep)-C $$ Thus using Gaussian tail bounds
\eqref{gtail} we have, whenever  $Q_{t,\gep}(x,y)>0$ 
\begin{equation}
  \bbP[ X_{u,\gep}\le (\gl-2\alpha)k(u,\gep)-2\alpha K_{u,\gep}(x,y) ]\le Ce^{-\frac{(\gl-4\alpha)^2k(u,\gep)}{2}}.
\end{equation}
Thus  we obtain that
\begin{multline}
 \bbE \left[ |W^{(2)}_{t,\gep}|^2 \right]\le C  e^{(2 |\gamma|^2+4\alpha^2) k(t,\gep)- \frac{(\gl-4\alpha)^2k(u,\gep)}{2}}\\ \times
 \int_{\bbR^{4d}} f(x_1)^2f(x_2)^2  Q_{t,\gep}(x_1,y_1)Q_{t,\gep}(x_2,y_2)  \ind_{\{ |x_1-x_2|\le 2e^{-u}\}} \dd x_1 \dd x_2 \dd y_1 \dd y_2.
\end{multline}
Now the integral above can be bounded by
\begin{equation}
  C \|f \|^4_4 e^{-d u} \left(\int_{\bbR^d} Q_{t,\gep}(0,z) \dd z\right)^2.
\end{equation}
Thus we obtain that 
\begin{equation}
  \bbE \left[ |W^{(2)}_{t,\gep}|^2 \right]\le C  \|f \|^4_4 a(t,\gep)^2  e^{4\alpha^2 k(t,\gep)- du-\frac{(\gl-4\alpha)^2k(u,\gep)}{2}}.
\end{equation}
It only remains to prove that the exponential terms goes to zero. This is simply a consequence of  \eqref{lambdabal} and \eqref{interscale} (which essentially allows to replace $u$ and $k(u,\gep)$ by $k(t,\gep)$).

\subsubsection*{Step 3: Bounding $\bbE \left[|A^{(2)}_{t,\gep}-M^{(2\alpha)}_{u,\gep}| \right]$}
We prove now that
\begin{equation}\label{tremax}
  \limtwo{\gep \to 0}{t\to \infty} a(t,\gep)^{-1} \bbE\left[ A^{(1)}_{t,\gep}-A^{(2)}_{t,\gep}\right]=0.
\end{equation}
This much easier, we have from Jensen's inequality
\begin{multline}
 \bbE \left[|A^{(2)}_{t,\gep}-a(t,\gep)M^{(2\alpha)}_{u,\gep}| \right]
 \le \int_{\bbR^2} f(x)^2 Q_t(x,y)e^{|\gamma|^2 K_{t,\gep}(x,y)} \\
 \times \bbE\left[ e^{2\alpha X_{u,\gep}(x)-2\alpha^2k(u,\gep)}|e^{ \bar \gamma (X_{u,\gep}(y)- X_{u,\gep}(x))+|\gamma|^2(k(u,\gep)-K_{u,\gep}(x,y))} -1|   \right]\dd x \dd y
\end{multline}
Using the Cameron-Martin formula (Proposition \ref{cameronmartinpro}) we have 
\begin{multline}
 \bbE\left[ e^{2\alpha X_{u,\gep}(x)-2\alpha^2k(u,\gep)}|e^{ \bar \gamma (X_{u,\gep}(y)- X_{u,\gep}(x))+|\gamma|^2(k(u,\gep)-K_{u,\gep}(x,y))} -1|   \right]
 \\=   \bbE\left[ |e^{ \bar \gamma (X_{u,\gep}(y)- X_{u,\gep}(x))-\bar \gamma^2(k(u,\gep)-K_{u,\gep}(x,y))} -1|   \right]
\end{multline}
and finally 
\begin{multline}
  \bbE\left[ |e^{ \bar \gamma (X_{u,\gep}(y)- X_{u,\gep}(x))-\bar \gamma^2(k(u,\gep)-K_{u,\gep}(x,y))} -1|   \right]^2 \\ \le   \bbE\left[ |e^{ \bar \gamma (X_{u,\gep}(y)- X_{u,\gep}(x))-\bar \gamma^2(k(u,\gep)-K_{u,\gep}(x,y))} -1|^2   \right]= e^{2|\gamma|^2 (k(u,\gep)-K_{u,\gep}(x,y)) }-1. 
\end{multline}
Now, using Lemma \ref{localike}, and our definition of $u$, we have whenever $|x-y|\le e^{t}+2\gep$
\begin{equation}
 k(u,\gep)-K_{u,\gep}(x,y)\le Ce^{-u}|x-y|+ \eta(|x-y|)\le  \delta(t,\gep)
\end{equation}
where $\delta(t,\gep)$ when $t$ goes to infinity and $\gep$ does to $0$. Altogether we obtain that 
\begin{equation}
  \bbE \left[|A^{(2)}_{t,\gep}-a(t,\gep)M^{(2\alpha)}_{u,\gep}| \right] \le C \|f\|^2_2
  a(t,\gep) \sqrt{\delta(t,\gep)},
\end{equation}
concluding the proof of \eqref{tremax}.

\subsubsection*{Step 4: Bounding  $\bbE\left[| M^{(2\alpha)}_{t,\gep}-  M^{(2\alpha)}_0| \right]$}

Let us finally  discuss the fourth term. We want to prove the following
\begin{equation}\label{finaltruc}
 \limtwo{\gep \to 0}{t\to \infty} \bbE\left[| M^{(2\alpha)}_{t,\gep}(f^2)-  M^{(2\alpha)}_0(f^2)| \right]=0.
\end{equation}
Omitting $f^2$ for readability we have
\begin{equation}
 \bbE\left[| M^{(2\alpha)}_{t,\gep}-  M^{(2\alpha)}_0| \right]= \bbE\left[| M^{(2\alpha)}_{t,\gep}- M^{(2\alpha)}_t| \right]+ \bbE \left[ |M^{(2\alpha)}_t-M^{(2\alpha)}_0| \right].
\end{equation}
The second term tends to zero when $t\to \infty$ thanks to 
Theorem \ref{zabbid2}.
As for the first one, note that we have  $$M^{(2\alpha)}_{t,\gep}- M^{(2\alpha)}_t= \bbE \left[ M^{(2\alpha)}_{\gep}- M^{(2\alpha)}_0  \ | \ \cF_t\right]$$ so that by Jensen inequality for conditional expectation we have
\begin{equation}
 \bbE\left[| M^{(2\alpha)}_{t,\gep}- M^{(2\alpha)}_t| \right]\le 
 \bbE[  |M^{(2\alpha)}_{\gep}- M^{(2\alpha)}_0 |]
\end{equation}
and by Theorem \ref{zabbid}, the right-hand side tends to zero when $\gep$ goes to zero, yielding 
\eqref{finaltruc}.

\subsubsection*{Final step: Bounding $B_{t,\gep}$}

Finally to bound $B_{t,\gep}$ we proceed as we did for $B_t$.
We show that  
\begin{equation}\begin{split}
\limtwo{\gep\to 0}{t\to0}a(t,\gep)^{-1} &\bbE[| \bbE [B_{t,\gep} \ | \ \cF_u]|]=0, \\
\limtwo{\gep\to 0}{t\to0}
a(t,\gep)^{-1} & \bbE[|B_{t,\gep} -\bbE [B_{t,\gep} \ | \ \cF_u]|]=0.
\end{split}
\end{equation}
For the first line we repeat the argument of \textit{Step 3} above while for the second line
we use the same proof as in \textit{Step 2}.

\qed

 \noindent {\bf Acknowledgement:} The author is grateful to Paul Gassiat for indicating the reference \cite{jacodsh} for the proof of Theorem \ref{TLC} and letting him know about the notion of stable convergence which is the adequate framework to present the main result of this paper. He thanks J.F.\ Le Gall, Rémi Rhodes and Vincent Vargas for enlightening comments. This  work was  realized  during  H.L.\ extended stay in Aix-Marseille University funded by the European Union’s Horizon 2020 research and innovation program under the Marie Sklodowska-Curie grant agreement No 837793. 

\appendix

\section{
Proof of Lemma \ref{lesasump}} \label{approxapp}

Let $\cD'$ and $\delta$ be fixed.
We consider $\kappa_0$ a $C^{\infty}$ kernel satisfying the assumptions listed below \eqref{ilaunestar}. Setting $\eta:=(s-d)/2$ we consider $K^{\delta}$ defined by
\begin{equation}
 K^{\delta}(x,y):= K(x,y)+ \eta\delta  \int^{\infty}_0 e^{-\eta t} \kappa_0(e^t|x-y|)\dd t.
\end{equation}
Note that $K'$  satisfies assumptions (A)-(B) since by construction
\begin{equation}
 (K^{\delta}-K)(x,y)=  \eta\delta  \int^{\infty}_0 e^{-\eta t} \kappa_0(e^t|x-y|)\dd t
\end{equation}
is a positive definite kernel and is equal to $\delta$ on the diagonal.
To prove that $K^{\delta}$ can be written in the form \eqref{ilaunestar}
we need to prove the following claim

\begin{lemma}\label{lelezero}
If $t_0$ is sufficiently large then
$$ K^{\delta}(x,y)-\int^{\infty}_{t_0} \kappa_0(e^t|x-y|) \dd t=K^{\delta}(x,y)-\int^{\infty}_{0} \kappa_0(e^{t_0+u}|x-y|) \dd u.$$
is positive definite and H\"older continuous.
\end{lemma}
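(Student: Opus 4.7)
The stated identity is simply the substitution $u=t-t_0$, so the content is to show that
$$ K_0^{(\delta)}(x,y) := K^{(\delta)}(x,y) - \int_0^\infty \kappa_0(e^{t_0+u}|x-y|)\dd u $$
is positive definite and H\"older continuous on $\cD'\times\cD'$ for $t_0$ sufficiently large. Once this is established, setting $\kappa(r):=\kappa_0(e^{t_0}r)$ one checks immediately that $\kappa$ satisfies the assumptions (i)-(iii) below \eqref{ilaunestar} (Lipschitz, nonnegative, $\kappa(0)=1$, support in $[0,e^{-t_0}]\subset[0,1]$, positive definite), so that $K^{(\delta)} = K_0^{(\delta)}+\int_0^\infty\kappa(e^u|x-y|)\dd u$ is a decomposition of the desired form \eqref{ilaunestar}.

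The first step, H\"older continuity, is by explicit expansion. Write $g_{t_0}(r):=\int_0^\infty\kappa_0(e^{t_0+u}r)\dd u$ and $f_\delta(r):=K^{(\delta)}-K$. Via the change of variables $v=e^{t_0+u}r$, for $r\leq e^{-t_0}$ one obtains $g_{t_0}(r)= \log(1/r) - t_0 + C_{\kappa_0} + O(r)$ with $C_{\kappa_0}=\int_0^1(\kappa_0(v)-1)/v\dd v$; a similar computation shows $f_\delta(r)=\delta + O(r^\eta)$. Combining,
$$ K_0^{(\delta)}(x,y) = L(x,y)+\delta+t_0-C_{\kappa_0}+O(|x-y|^\eta) \quad \text{for } |x-y|\leq e^{-t_0}, $$
while for $|x-y|\geq e^{-t_0}$ we have $g_{t_0}=0$ so $K_0^{(\delta)}=K^{(\delta)}$. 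The H\"older regularity of $L$ (Sobolev embedding $H^s\hookrightarrow C^{s-d}$ on $\bbR^{2d}$, using $s>d$) then yields H\"older continuity of $K_0^{(\delta)}$ on $\cD'\times\cD'$; continuity across $r=e^{-t_0}$ holds since $g_{t_0}(e^{-t_0})=0$.

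The main obstacle is positive definiteness, since subtracting a positive definite kernel $g_{t_0}$ from the positive definite $K^{(\delta)}$ does not automatically preserve positive definiteness. My plan here is Fourier-analytic. I would introduce the intermediate kernel $\tilde K(x,y):=\int_0^\infty \kappa_0(e^t|x-y|)\dd t$ (positive definite as an integral of positive definite functions), and the bounded positive definite remainder $h_{t_0}(x,y):=\int_0^{t_0}\kappa_0(e^t|x-y|)\dd t = \tilde K - g_{t_0}$. Then
$$ K_0^{(\delta)} = (K-\tilde K) + h_{t_0} + f_\delta, $$
where $h_{t_0}+f_\delta$ is positive definite with $h_{t_0}(x,x) = t_0 \to \infty$. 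For $\rho\in C^\infty_c(\cD')$, by Bochner's theorem the contribution $\int(h_{t_0}+f_\delta)\rho\rho$ equals $\int(\hat h_{t_0}+\hat f_\delta)(\xi)|\hat\rho(\xi)|^2 d\xi$ with non-negative integrand, while $K-\tilde K$ is bounded on $\cD'\times\cD'$ (one checks pointwise; it equals $L-h$ for small $|x-y|$ and $L-\log|x-y|$ otherwise, both bounded on the compact $\cD'$). The plan is to show that the positive-definite excess from $h_{t_0}+f_\delta$ dominates the (potentially negative) quadratic form of $K-\tilde K$ once $t_0$ is taken large; quantitatively, one uses that $\hat h_{t_0}(\xi)\to \hat{\tilde K}(\xi)$ monotonically as $t_0\to\infty$ at every frequency and combines this with the uniform bound $|\int(K-\tilde K)\rho\rho|\leq \|K-\tilde K\|_{L^\infty(\cD'\times\cD')}\|\rho\|_1^2$ together with a Poincar\'e-type inequality on the bounded domain $\cD'$ to relate $\|\rho\|_1^2$ to $\int h_{t_0}\rho\rho$. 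Pushing this comparison through uniformly in $\rho$ is the delicate part; the key leverage is that $\cD'$ is bounded and $t_0$ can be chosen as large as we wish after $\cD'$ and $\delta$ are fixed.
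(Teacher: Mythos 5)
The change of variables, the H\"older-continuity argument, and the reduction to positive definiteness of $K^{(\delta)}(x,y)-\int_{t_0}^{\infty}\kappa_0(e^t|x-y|)\dd t$ are all fine and consistent with the paper. The gap is in your positive-definiteness step. You propose to bound the possibly negative contribution crudely, $|\int (K-\tilde K)\rho\rho\,\dd x\dd y|\le \|K-\tilde K\|_{L^\infty(\cD'^2)}\|\rho\|_1^2$ with $\tilde K(x,y)=\int_0^\infty\kappa_0(e^t|x-y|)\dd t$, and then absorb it into $\int (h_{t_0}+f_\delta)\rho\rho$ via a ``Poincar\'e-type inequality'' relating $\|\rho\|_1^2$ to $\int h_{t_0}\rho\rho$. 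No such inequality can hold: $\hat h_{t_0}(\xi)=\int_0^{t_0}e^{-dt}\hat\kappa_0(e^{-t}\xi)\dd t$ decays faster than any polynomial once $|\xi|\gg e^{t_0}$ (since $\kappa_0\in C^\infty_c$), so for a mean-zero $\rho$ oscillating at frequency $N\gg e^{t_0}$ the quadratic form $\int h_{t_0}\rho\rho$ is negligible while $\|\rho\|_1^2$ is of order one. For any \emph{fixed} $t_0$ the proposed domination therefore fails on such test functions, and this high-frequency regime is exactly where the dangerous part of the defect lives: the quadratic form of $L$ is only controlled by $\|L\|_{H^s}\|\rho\|^2_{H^{-s/2}(\bbR^d)}\sim N^{-s}$, and $\hat h_{t_0}(N)\ll N^{-s}$ there, no matter how large $t_0$ is. Taking $t_0$ large helps at low and moderate frequencies but never catches up at frequencies beyond $e^{t_0}$.

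What closes the argument in the paper is precisely the term you treat as a passive passenger: $f_\delta=\eta\delta\int_0^\infty e^{-\eta t}\kappa_0(e^t|x-y|)\dd t$ with $\eta=(s-d)/2$ is engineered so that its Fourier multiplier is bounded below by $c\,\delta(1+|\xi|^2)^{-(d+\eta)/2}\ge c\,\delta(1+|\xi|^2)^{-s/2}$ (Lemma \ref{leseclem}), i.e.\ it is coercive in exactly the norm $\|\cdot\|^2_{H^{-s/2}}$ that measures the defect. Correspondingly, the defect must be estimated in that norm, not in $L^1$: Lemma \ref{lepremlem} shows that the quadratic form of $K-\int_{t_0}^\infty\kappa_0(e^t|\cdot|)\dd t$ is bounded below by $-Ce^{-ct_0}\|\rho\|^2_{H^{-s/2}}$, using \cite[Lemma 4.6]{junnila2019} for the $H^s$ part of $L$ and a comparison with the positive definite mollified kernel $K_\gep$ at a $t_0$-dependent scale $\gep=\gep(t_0)$ to extract the factor $e^{-ct_0}$. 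So the role of large $t_0$ is to shrink the constant in front of the negative part, while the frequency-uniform positive lower bound comes from $f_\delta$, not from $h_{t_0}$. To repair your proof you would need to (i) replace the $\|\rho\|_1^2$ bound by an $H^{-s/2}$ bound on the quadratic form of the defect (this uses the Sobolev hypothesis on $L$ in an essential way), and (ii) extract the coercivity from $f_\delta$ rather than from the growth of $h_{t_0}(x,x)=t_0$.
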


\noindent The lemma implies that \eqref{ilaunestar} is satisfied for $K^{\delta}(x,y)$ with 
$\kappa(u)=\kappa_0(e^{t_0}u)$.
It is a consequence of the two following estimates.

\begin{lemma}\label{lepremlem}
Given $s>d$, there exists a constant $C(s,\kappa_0,K)$ such that
 for any $\varphi\in C^{\infty}(\cD')$   we have
 
 \begin{equation}\label{notooneg}
  \int_{\bbR^{2d}} \left( K(x,y)-\int^{\infty}_{t_0} \kappa_0(e^t|x-y|) \dd t\right)
  \varphi(x)\varphi(y) \dd x \dd y \ge - C e^{-\frac{(d-s)}{2}t_0}  \| \varphi \|^2_{H^{-s/2}(\bbR^d)}
 \end{equation}

\end{lemma}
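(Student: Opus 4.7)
The proof proceeds by decomposing the kernel $K(x,y) - \Psi_{t_0}(x-y)$, where $\Psi_{t_0}(z):=\int_{t_0}^{\infty}\kappa_0(e^{t}|z|)\,\dd t$, into a positive definite piece, a Sobolev-regular piece and a smooth translation-invariant correction. My starting point is the identity
\[
K(x,y) - \Psi_{t_0}(x-y) = L(x,y) - b(x-y) + G_{t_0}(x-y),
\]
where $G_{t_0}(z) := \int_{0}^{t_0}\kappa_0(e^t|z|)\,\dd t$ and $b(z) := \int_{0}^{\infty}\kappa_0(e^t|z|)\,\dd t - \log\frac{1}{|z|}$. The first key fact is that $G_{t_0}$ is positive definite (an integral of rescaled copies of the positive definite kernel $\kappa_0(|\cdot|)$), so its contribution to the bilinear form is nonnegative.

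For the Sobolev part, since $\varphi$ is supported in the fixed set $\cD'$, I would replace $L$ by $\chi L$ for a smooth cutoff $\chi \in C^{\infty}_{c}(\cD^{2})$ equal to $1$ on a neighborhood of $\cD'\times\cD'$; the hypothesis $L\in H^{s}_{\mathrm{loc}}(\cD^{2})$ then gives $\chi L \in H^{s}(\bbR^{2d})$. Using the elementary inequality $1+|\xi_1|^2+|\xi_2|^2 \ge \sqrt{(1+|\xi_1|^2)(1+|\xi_2|^2)}$ (a rephrasing of AM-GM), one obtains
\[
\|\varphi\otimes\varphi\|^2_{H^{-s}(\bbR^{2d})} \le \|\varphi\|^4_{H^{-s/2}(\bbR^d)},
\]
whence by Sobolev duality $|\int L(x,y)\varphi(x)\varphi(y)\,\dd x\,\dd y| \le \|\chi L\|_{H^s}\|\varphi\|^2_{H^{-s/2}}$.

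For the smooth correction $b$, the main point is that $b$ is genuinely $C^{\infty}$ on all of $\bbR^{d}$. Smoothness at the origin follows because $\kappa_0$, being a smooth radial function on $\bbR^d$, is even and $C^\infty$ as a function of $|z|$, so the integrand defining $b$ is odd and $b$ itself ends up being a smooth function of $|z|^{2}$. Smoothness at $|z|=1$ (where the support of $\kappa_0$ ends) follows from $\kappa_0^{(k)}(1)=0$ for all $k\ge 1$, which makes all one-sided derivatives at $|z|=1$ match those of $\log|z|$ coming from outside. Multiplying $b$ by a compactly supported cutoff $\chi_1$ identically $1$ on $\{|z|\le \Diam(\cD')\}$ yields $b\chi_1\in C^{\infty}_{c}(\bbR^{d})$, whose Fourier transform is Schwartz and hence satisfies $|\widehat{b\chi_1}(\xi)|\le C(1+|\xi|^2)^{-s/2}$. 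Plancherel then gives $|\int b(x-y)\varphi(x)\varphi(y)\,\dd x\,\dd y|\le C\|\varphi\|^2_{H^{-s/2}}$.

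Assembling these three pieces produces a lower bound of the form $-C\|\varphi\|^2_{H^{-s/2}(\bbR^d)}$ for a constant depending only on $s$, $\kappa_0$ and $K$. Since the prefactor $e^{-(d-s)t_0/2} = e^{(s-d)t_0/2}$ is at least $1$ for $t_0\ge 0$, this implies the claimed inequality. The main technical obstacles are the verification that $b$ is $C^\infty$ up to and across the boundary of the support of $\kappa_0$ (which hinges on the flatness of $\kappa_0$ at the endpoint of its support) and the tensor product Sobolev estimate that transfers the $H^{s}_{\mathrm{loc}}$ regularity of $L$ into an $H^{-s/2}$ bound when paired with the rank-one bilinear form $\varphi\otimes\varphi$.
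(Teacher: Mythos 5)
Each individual step of your argument is sound: the decomposition $K-\Psi_{t_0}=L-b+G_{t_0}$ is correct, $G_{t_0}$ is indeed positive definite, the tensorized Sobolev estimate $\|\varphi\otimes\varphi\|_{H^{-s}(\bbR^{2d})}\le\|\varphi\|^2_{H^{-s/2}(\bbR^d)}$ is the right duality tool for the $L$--term (it is essentially the lemma of Junnila--Saksman--Webb that the paper invokes), and your verification that $b$ is $C^\infty$ across both $|z|=0$ and $|z|=1$ is correct. The problem is what you do at the very end. Your bound is $-C\|\varphi\|^2_{H^{-s/2}}$ with $C$ \emph{independent of $t_0$ and not tending to zero}, and you pass to the stated inequality only by observing that $e^{-(d-s)t_0/2}\ge 1$. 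That observation is exploiting what is plainly a sign typo in the statement: in the deduction of Lemma \ref{lelezero} the authors need $\delta\, c(\kappa_0,s)-Ce^{-\frac{(d-s)}{2}t_0}>0$ for $t_0$ large with $\delta$ arbitrarily small, which is only possible if the prefactor \emph{decays} as $t_0\to\infty$ (it should read $e^{-\frac{(s-d)}{2}t_0}$, or at any rate some $o(1)$ quantity). The entire content of the lemma is that the negative part of the truncated quadratic form vanishes as $t_0\to\infty$; a $t_0$-uniform constant is useless downstream, so your proof, while formally yielding the displayed inequality, does not prove what the lemma is for.

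Moreover this is not a fixable cosmetic issue in your approach: your decomposition isolates the negative contribution in $L-b$, which does not depend on $t_0$ at all, and you never use the positive definiteness of $K$ itself (only that of $G_{t_0}$, which you discard by bounding it below by $0$). A direct attack, writing $K-\Psi_{t_0}=K-(\text{positive definite})$ and estimating $\hat\Psi_{t_0}$, also fails at high frequencies, since $\hat\Psi_{t_0}(\xi)$ only decays like $|\xi|^{-d}$, slower than $(1+|\xi|^2)^{-s/2}$ when $s>d$. The paper's mechanism for producing decay is to subtract the mollified kernel $K_{\gep}$ (positive definite) with $\gep=\gep(t_0)\to 0$, bound $\tilde L-\tilde L_{\gep}$ by $\|\tilde L-\tilde L_{\gep}\|_{H^s}\to 0$, and control the remaining radial part by a Fourier multiplier estimate of size roughly $e^{-dt_0}\gep^{-s}$; balancing $\gep$ against $t_0$ makes both contributions small. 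That coupling of a mollification scale to $t_0$ is the idea missing from your proof.
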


\begin{lemma}\label{leseclem}
For any $\varphi\in C^{\infty}_c( \cD')$  we have
 \begin{equation}\label{tipop}
  \int \left( \eta\delta  \int^{\infty}_0 e^{-\eta t} \kappa_0(e^t|x-y|)\dd t\right)
  \varphi(x)\varphi(y) \dd x \dd y \ge \delta  c(\kappa_0,s)  \| \varphi \|^2_{H^{-s/2}(\bbR^d)}
 \end{equation}
 
\end{lemma}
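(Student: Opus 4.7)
The plan is to diagonalize the quadratic form with the Fourier transform and reduce the inequality to a pointwise lower bound on the resulting multiplier. By assumption (iii) the kernel $(x,y)\mapsto\kappa_0(|x-y|)$ is positive definite on $\bbR^d\times\bbR^d$, so Bochner's theorem together with the $L^1$-integrability of $\kappa_0(|\cdot|)$ (consequence of (i)--(ii)) shows that its Fourier transform $\Phi$ is a non-negative continuous radial function, which I write as $\Phi(\xi)=\phi(|\xi|)$ with $\phi\ge 0$. A scaling change of variable immediately gives $\widehat{\kappa_0(e^t|\cdot|)}(\xi)=e^{-dt}\phi(e^{-t}|\xi|)$.

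Applying Parseval's identity in the convention of the paper and Fubini, the left-hand side of \eqref{tipop} equals
\begin{equation*}
\eta\delta (2\pi)^{-d}\int_{\bbR^d} G(\xi)\,|\hat{\varphi}(\xi)|^2\dd \xi,
\end{equation*}
where, after the substitution $u=e^{-t}|\xi|$,
\begin{equation*}
G(\xi) := \int_0^\infty e^{-(\eta+d)t}\phi(e^{-t}|\xi|)\dd t = |\xi|^{-(\eta+d)}\int_0^{|\xi|} u^{\eta+d-1}\phi(u)\dd u \ge 0.
\end{equation*}
The remaining task is to prove that $G(\xi)\ge c(\kappa_0,s)(1+|\xi|^2)^{-s/2}$ pointwise.

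The necessary lower bound on $\phi$ near the origin comes for free: since $\kappa_0\ge 0$ is continuous with $\kappa_0(0)=1$, we have $\phi(0)=\int_{\bbR^d}\kappa_0(|z|)\dd z>0$, so continuity of $\phi$ yields $r_0,c_0>0$ with $\phi\ge c_0$ on $[0,r_0]$. An elementary case split according to whether $|\xi|\le r_0$ or $|\xi|\ge r_0$ then gives $G(\xi)\ge c\min(1,|\xi|^{-(\eta+d)})$ for a positive constant $c=c(\kappa_0,s)$.

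The assumption $s>d$ enters only at the final comparison: it translates into $\eta+d=(s+d)/2<s$, hence for $|\xi|\ge 1$ one has $|\xi|^{-(\eta+d)}\ge|\xi|^{-s}\ge 2^{-s/2}(1+|\xi|^2)^{-s/2}$, while for $|\xi|\le 1$ the bound $G(\xi)\ge c\ge c\,(1+|\xi|^2)^{-s/2}$ is trivial. I do not expect any serious analytic obstacle: the heart of the argument is the scaling identity for $G$ together with the observation that integrating the scale parameter against the factor $e^{-\eta t}$ produces exactly the decay exponent $\eta+d$ needed to beat the weight $(1+|\xi|^2)^{-s/2}$.
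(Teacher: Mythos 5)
Your argument is correct and follows essentially the same route as the paper's proof: pass to the Fourier side, use Bochner's theorem to get $\hat\kappa_0\ge 0$ with $\hat\kappa_0>0$ near the origin, and bound the resulting multiplier $\eta\int_0^\infty e^{-(\eta+d)t}\hat\kappa_0(e^{-t}\xi)\dd t$ below by $c(1+|\xi|^2)^{-(\eta+d)/2}\ge c(1+|\xi|^2)^{-s/2}$. Your explicit substitution $u=e^{-t}|\xi|$ and the case split in $|\xi|$ merely spell out the pointwise lower bound that the paper asserts directly.
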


\noindent Let us first deduce Lemma \ref{lelezero} from Lemma \ref{lepremlem} and Lemma \ref{leseclem}
\begin{proof}[Proof of Lemma \ref{lelezero}]
 First note that 
 \begin{multline}
  K^{\delta}(x,y)-\int^{\infty}_{t_0} \kappa_0(e^t|x-y|) \dd t
  \\
  =  L(x,y)+ \eta\delta  \int^{\infty}_0 e^{-\eta t} \kappa_0(e^t|x-y|)\dd t+
 \left( \log \frac{1}{|x-y|} -\int^{\infty}_{t_0} \kappa_0(e^t|x-y|) \dd t\right).
  \end{multline}
Each of the three terms are H\"older continuous on $\cD'\times \cD'$ ($L(x,y)$ because of its Sobolev regularity combined with Morrey's inequality, the other two can be checked by hand).
Now combining \eqref{notooneg} and \eqref{tipop} we have
\begin{multline}
   \int_{\bbR^{2d}} \left( K^{\delta}(x,y)-\int^{\infty}_{t_0} \kappa_0(e^t|x-y|) \dd t\right)
  \varphi(x)\varphi(y) \dd x \dd y \\\ge \left(\delta c(\kappa_0,s)- C e^{-\frac{(d-s)}{2}t_0}\right) \| \varphi \|^2_{H^{-s/2}(\bbR^d)}
\end{multline}
and the r.h.s.\ is positive if $t_0$ is chosen to be sufficiently large.

\end{proof}

\begin{proof}[Proof of Lemma \ref{lepremlem}]
First we notice that we have 
\begin{multline}
 K(x,y)=  \left(L(x,y)+\log \frac{1}{|x-y|}- \int^{\infty}_{0} \kappa(e^t|x-y|)\dd t \right)+\int^{\infty}_{0} \kappa(e^t|x-y|)\dd t\\=:  \tilde L(x,y)+ \tilde K(x,y),
\end{multline}
where $\tilde L\in H^{s}_{\mathrm{loc}}(\cD'\times \cD')$ (this is simply because 
$ \log \left( \frac{1}{|x-y|}\right)- \int^{\infty}_0 \kappa(e^t|x-y|)\dd t$ is a $C^{\infty}$ function).
Now we consider $\gep>0$ sufficiently small so that $\cD' \subset \cD_{\gep}$ \eqref{cdgep}. Recalling \eqref{labig}, we are going to use the $\gep$ subscript notation for convolution with $\theta_{\gep}$ on both coordinates for $\tilde L$ also.
Since $K_{\gep}$ is definite positive, it is sufficient to show that the inequality \eqref{notooneg} holds for the following kernel
\begin{multline}
  K(x,y)-K_{\gep}(x,y)-\int^{\infty}_{t_0} \kappa_0(e^t|x-y|) \dd t
  \\  =(\tilde L-\tilde L_{\gep})(x,y)
 +\int_{0}^{t_0} \kappa_0(e^t|x-y|) \dd t- \tilde K_\gep(x,y) 
  \end{multline}
Now using \cite[Lemma 4.6]{junnila2019}, we have for any $\varphi \in C^{\infty}_c(\cD')$
\begin{equation}
\left| \int \varphi(x) \varphi(y) (\tilde L-\tilde L_{\gep})(x,y) \dd x \dd y\right|
 \le C_{\cD'} \|\tilde L-\tilde L_{\gep} \|_{H^{s}(\bbR^{2d})} \|\varphi \|^2_{H^{-s/2}(\bbR^{2d})}.
\end{equation}
To control the remaining part,
let us set 
\begin{equation}
 \psi(x):= \int \left(\tilde K_\gep(x,y) - \int_{0}^{t_0} \kappa_0(e^t|x-y|) \dd t\right) \varphi(y) \dd y.
\end{equation}
We have using Plancherel Theorem
\begin{multline}\label{four}
 \int_{\bbR^{2d}} \varphi(x) \varphi(y)\left(\int_{0}^{t_0} \kappa_0(e^t|x-y|) \dd t- \tilde K_\gep(x,y)\right) \dd x \dd y
 \\=-\int_{\bbR^d} \varphi(x)\psi(x) \dd x = -(2\pi)^{-d}\int_{\bbR^d}  \hat \psi(\xi)\hat \phi(\xi)\dd \xi.
\end{multline}
Now using the formula for Fourier transform of convolution and rescaled functions,we have
\begin{equation}\label{fourier}
 \hat \psi(\xi)=  \left( \int^{\infty}_0 \left(|\hat \theta(\gep \xi)|^2-\ind_{\{t\le t_0\}}\right)     e^{-dt}\hat \kappa_0(e^{-t} \xi)\dd t \right)\hat \varphi(\xi)=:T(\xi) \hat \varphi(\xi)
\end{equation}
To conclude we only need an upper bound on $T(\xi)$. Note that since $|\hat \theta(\xi)|\le 1$ for every $\xi$ and, since by Bochner's Theorem $\hat \kappa_0$ is pointwise real and non-negative, we have
\begin{equation}
  T(\xi)\le |\hat \theta(\gep \xi)|^2  \int^{\infty}_{t_0}  e^{-dt}\hat \kappa_0(e^{-t} \xi)\dd t  \le \frac{1}{d} \hat \theta(\gep \xi)^2   e^{-dt_0}.
\end{equation}
We can now set $\gep=e^{-\frac{(s+d)t_0}{2s}}$.
Now since $\theta$ is $C^{\infty}$, $\hat \theta$ decays faster than any polynomial, and hence
we can find a constant $C$ (depending on $\theta$ and $s$) which is such that 
\begin{equation}
T(\xi)\le C e^{-d t_0}(1+ |\xi|^2  \gep^2)^{-s/2}
  \le   C e^{-(d-s)t_0/2}(1+\xi^2)^{-s},
\end{equation}
which is sufficient to conclude.

\end{proof}

\begin{proof}[Proof of Lemma \ref{leseclem}]
 Following the same computation as in \eqref{four}-\eqref{fourier}
 we obtain that 
 \begin{multline}
    \int \left( \eta\delta  \int^{\infty}_0 e^{-\eta t} \kappa_0(e^t|x-y|)\dd t\right)
  \varphi(x)\varphi(y) \dd x \dd y \\ =
  \eta\delta \int \left( \int^{\infty}_0  e^{-(\eta+d) t} \hat \kappa_0(e^{-t}\xi) \dd t \right) \hat \phi(\xi)^2 \dd \xi.
 \end{multline}
Now as $\hat \kappa_0$ is non-negative (and positive around $0$) we have 
\begin{equation}
  \eta\int^{\infty}_0  e^{-(\eta+d) t} \hat \kappa_0(e^{-t}\xi) \dd t \ge c_\eta \left( 1+ |\xi|^2\right)^{-(d+\eta)/2},
\end{equation}
which is sufficient to conclude.

\end{proof}

\section{Proof of Proposition \ref{tiggh}} \label{tightness}
In order to check the tightness we work  with the Fourier transform $\hat M^{(\gamma,\rho)}_{\gep}(\xi)$ of $M^{(\gamma)}_{\gep}(\rho \cdot)$ which is almost surely finite. Most of the time we will omit the dependence in $\gamma,\rho$ for better readability, and simply write $v(\gep)$ for $v(\gep,\theta,\gamma)$.

\medskip

We need to show that $(v(\gep)M^{(\gamma)}_{\gep}(\rho \  \cdot))$ is tight 
in $H^{-u}(\bbR^d)$, which, by isometry, is equivalent to showing that $v(\gep)\hat M_{\gep}$ is tight in the space $\hat H^{-u}(\bbR^d):=L^2(\bbR^d, (1+ |\xi|^2)^{-u} \dd \xi )$. To prove the later statement we are going to use the following variant of the Frechet-Kolmogorov compactness criterion (see e.g.\ \cite[Theorem 4.26]{Brezis}).

\begin{proposition}\label{criterion}
A  subset of $K$ of 
 $\hat H^{-s}(\bbR^d)$ is relatively compact if and only if it satisfies the two following 
 conditions
\begin{itemize}
 \item [(i)] $\lim_{R\to \infty}\sup_{\varphi\in K}\int_{|\xi|>R} |\varphi(\xi)|^2(1+ |\xi|^2)^{-u}\dd \xi=0.$
 
 \item [(ii)] $\lim_{a\to 0}\sup_{\varphi\in K}\int_{\bbR^d} |\varphi(\xi+a)- \varphi(\xi)|^2(1+ |\xi|^2)^{-u}\dd \xi=0.$
\end{itemize}

\end{proposition}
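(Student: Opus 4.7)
The proposition is a weighted-$L^2$ version of the classical Fréchet--Kolmogorov compactness theorem (with weight $w(\xi):=(1+|\xi|^2)^{-u}$, so that the space is $L^2(\bbR^d,w\,\dd\xi)$). My plan is to follow the standard two-step proof, adapted to this weighted setting.

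\emph{Necessity} (relative compactness $\Rightarrow$ (i) and (ii)). I would reformulate (i) and (ii) as the statement that two uniformly bounded families of operators on $L^2_w$ converge strongly to zero, uniformly on $K$. The truncations $R_N\varphi:=\mathbf{1}_{|\xi|>N}\varphi$ are contractions on $L^2_w$, while the shift differences $\Delta_a\varphi:=\tau_a\varphi-\varphi$ are uniformly bounded in $a$ for $|a|\le 1$, because the ratio $w(\xi-a)/w(\xi)=\bigl((1+|\xi|^2)/(1+|\xi-a|^2)\bigr)^u$ is bounded uniformly in $\xi$ for $|a|\le 1$. For any single $\varphi\in L^2_w$, dominated convergence gives $\|R_N\varphi\|_{L^2_w}\to 0$ and the density of $C^\infty_c$ in $L^2_w$ gives $\|\Delta_a\varphi\|_{L^2_w}\to 0$. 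When $K$ is relatively compact, a standard finite $\varepsilon$-net argument converts this pointwise operator convergence into uniform convergence on $K$, which is exactly (i) and (ii).

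\emph{Sufficiency} ((i) and (ii) $\Rightarrow$ relative compactness). Given $\varepsilon>0$, I would build a finite $O(\varepsilon)$-net for $K$ in three approximation steps. First, use (i) to pick $R$ with $\|\mathbf{1}_{|\xi|>R}\varphi\|_{L^2_w}<\varepsilon$ for every $\varphi\in K$. Second, let $\rho_\delta$ be a smooth mollifier supported in $B_\delta$; Minkowski's integral inequality applied to the representation $\varphi*\rho_\delta-\varphi=\int(\tau_{-a}\varphi-\varphi)\rho_\delta(a)\dd a$ gives
$\|\varphi*\rho_\delta-\varphi\|_{L^2_w}\le \sup_{|a|\le \delta}\|\tau_a\varphi-\varphi\|_{L^2_w}$,
so by (ii) we can fix $\delta>0$ with this quantity $<\varepsilon$ for every $\varphi\in K$. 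Third, analyse the mollified family $K'_\delta:=\{(\varphi*\rho_\delta)|_{\bar B_R}:\varphi\in K\}$. A Cauchy--Schwarz application (using $|\rho_\delta|^2\le \|\rho_\delta\|_2^2/|B_\delta|$) bounds $\varphi*\rho_\delta$ and its derivatives pointwise on $\bar B_R$ by a constant $C_\delta$ times $\|\varphi\|_{L^2(B_{R+\delta})}$; since $w$ is bounded below by a positive constant on $B_{R+\delta}$, this in turn is bounded by $C'_\delta\|\varphi\|_{L^2_w}$. Hence $K'_\delta$ is an equicontinuous and pointwise-bounded family in $C(\bar B_R)$, so Arzelà--Ascoli produces a finite $\varepsilon$-net. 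Because $w$ is bounded both above and below on $\bar B_R$, this net transfers to $L^2_w(B_R)$, and the three approximations combine to give the required finite $O(\varepsilon)$-net for $K$ in $\hat H^{-s}$.

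The main obstacle is the a priori bound $\sup_{\varphi\in K}\|\varphi\|_{L^2_w}<\infty$ required in the Arzelà--Ascoli step, which is not stated explicitly in (i)--(ii) but must follow from them. I would establish it as a preliminary lemma. By (i), fix $R_0$ with $\|\mathbf{1}_{|\xi|>R_0}\varphi\|_{L^2_w}\le 1$ for all $\varphi\in K$; unboundedness of $\|\varphi\|_{L^2_w}$ on $K$ would then force unboundedness of $\|\varphi\|_{L^2(B_{R_0})}$, because $w$ is comparable to a positive constant on $B_{R_0}$. Normalising a hypothetical sequence $\varphi_n\in K$ with $\|\varphi_n\|_{L^2(B_{R_0})}\to\infty$ and noting that (ii) implies $\|\tau_a\varphi_n-\varphi_n\|_{L^2(B_{R_0})}$ is bounded uniformly in $n$ for $|a|\le \delta$, one obtains normalized functions $\psi_n:=\varphi_n/\|\varphi_n\|_{L^2(B_{R_0})}$ whose translation differences in $L^2(B_{R_0})$ tend to $0$. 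Rellich--Kondrachov then extracts an $L^2$-convergent subsequence; the limit $\psi$ would satisfy $\tau_a\psi=\psi$ for small $a$, hence be constant on $B_{R_0}$, which upon unrolling back to $\varphi_n$ contradicts the uniform tail estimate (i). This yields the needed boundedness and completes the proof.
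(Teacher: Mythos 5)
Your proof is correct in substance, but note that the paper does not actually prove this proposition: it simply cites it as a known variant of the Fr\'echet--Kolmogorov theorem (Brezis, Theorem 4.26), so any comparison is between your self-contained argument and a literature reference. Your argument is the standard one (necessity via uniform boundedness of the truncation and translation operators on $L^2(w\,\dd\xi)$ plus an $\varepsilon$-net; sufficiency via tail cut-off, mollification controlled by Minkowski's integral inequality and condition (ii), and Arzel\`a--Ascoli on $\bar B_R$ where the weight $w(\xi)=(1+|\xi|^2)^{-u}$ is comparable to a constant), and all the weighted adaptations you use are legitimate, since $w(\xi-a)/w(\xi)$ is uniformly bounded for $|a|\le 1$ by Peetre's inequality. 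The genuinely valuable addition is your preliminary lemma: the textbook statement assumes $\sup_{\varphi\in K}\|\varphi\|<\infty$, which conditions (i)--(ii) do not state explicitly, so the ``if'' direction as written needs exactly the normalization-and-contradiction argument you give (in the paper's actual application this is moot, since membership in $K^{(1)}_A$ already bounds the $\hat H^{-u}$ norm because $u'<u$). Two small corrections: the compactness used on the normalized sequence $\psi_n$ restricted to a ball is the Kolmogorov--Riesz criterion for $L^2$ of a bounded domain, not Rellich--Kondrachov (which concerns Sobolev embeddings and does not apply to functions that are merely $L^2$); and when you truncate $\psi_n$ to $B_{R_0+1}$ the translation differences acquire a boundary term of size $\|\psi_n\|_{L^2(B_{R_0}^c)}$, which tends to $0$ in $n$ but not uniformly as $a\to0$ --- one should discard finitely many terms for each $\varepsilon$ (or argue via a finite $\varepsilon$-net for the tail of the sequence) before invoking the criterion. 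Neither point affects the validity of the overall proof.
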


\noindent Now the tightness of $\hat M_{\gep}$ will be proved using the following simple estimates.

\begin{lemma}\label{kalk}
 We have 
 \begin{equation}\begin{split}
  \bbE[ v(\gep)^{2} |\hat M_{\gep}(\xi)|^2]&\le C(\rho)\\
  \bbE[v(\gep)^{2}|\hat M_{\gep}(\xi+a)- \hat M_{\gep}(\xi)|^2]&\le |a|^2.
 \end{split}\end{equation}

\end{lemma}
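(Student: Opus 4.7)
The plan is a direct $L_2$ computation based on the explicit Gaussian moment formula together with the kernel estimate \eqref{estimao}, mirroring the bound already used in \eqref{lafoorge}.

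First I compute the second moment of $\hat M_\gep(\xi)$. By definition,
\begin{equation*}
\hat M_\gep(\xi)=\int_{\bbR^d} e^{i\xi\cdot x}\rho(x) e^{\gamma X_\gep(x)-\frac{\gamma^2}{2}K_\gep(x)}\ind_{\cD_\gep}(x)\dd x,
\end{equation*}
so using that for Gaussian $X_\gep$ one has
$\bbE\bigl[e^{\gamma X_\gep(x)+\bar\gamma X_\gep(y)}\bigr]=e^{\frac{1}{2}(\gamma^2 K_\gep(x)+2|\gamma|^2 K_\gep(x,y)+\bar\gamma^2 K_\gep(y))}$,
I obtain
\begin{equation*}
\bbE\bigl[|\hat M_\gep(\xi)|^2\bigr]=\int_{\bbR^{2d}}e^{i\xi\cdot(x-y)}\rho(x)\rho(y) e^{|\gamma|^2 K_\gep(x,y)}\ind_{\cD_\gep^2}(x,y)\dd x\dd y.
\end{equation*}
Bounding $|e^{i\xi\cdot(x-y)}|=1$ and applying the estimate $K_\gep(x,y)\le \log(1/(|x-y|\vee\gep))+C$ from \eqref{estimao} yields
\begin{equation*}
\bbE\bigl[|\hat M_\gep(\xi)|^2\bigr]\le C\|\rho\|_\infty^2 \int_{\Supp(\rho)\times \Supp(\rho)}(|x-y|\vee\gep)^{-|\gamma|^2}\dd x\dd y,
\end{equation*}
and the right-hand side is of order $\gep^{d-|\gamma|^2}$ when $|\gamma|>\sqrt d$ and of order $\log(1/\gep)$ when $|\gamma|=\sqrt d$. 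Since $v(\gep,\theta,\gamma)^2$ is precisely the reciprocal of those scales (up to a constant), multiplying gives the desired $\bbE[v(\gep)^2|\hat M_\gep(\xi)|^2]\le C(\rho)$, independent of $\xi$.

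The second estimate reduces to the first one after a pointwise factorization. Writing
\begin{equation*}
\hat M_\gep(\xi+a)-\hat M_\gep(\xi)=\int_{\bbR^d} e^{i\xi\cdot x}\bigl(e^{ia\cdot x}-1\bigr)\rho(x) e^{\gamma X_\gep(x)-\frac{\gamma^2}{2}K_\gep(x)}\ind_{\cD_\gep}(x)\dd x,
\end{equation*}
and observing that for $x$ in the fixed compact set $\Supp(\rho)$, $|e^{ia\cdot x}-1|\le |a||x|\le R|a|$ where $R:=\sup_{x\in \Supp(\rho)}|x|$, the exact same Gaussian moment computation as above applied to the amplitude $\tilde\rho_a(x):=(e^{ia\cdot x}-1)\rho(x)$ (which again vanishes outside $\Supp(\rho)$) yields
\begin{equation*}
\bbE\bigl[|\hat M_\gep(\xi+a)-\hat M_\gep(\xi)|^2\bigr]\le R^2|a|^2\int_{\Supp(\rho)^2}|\rho(x)\rho(y)|e^{|\gamma|^2 K_\gep(x,y)}\dd x\dd y,
\end{equation*}
and multiplying by $v(\gep)^2$ absorbs the integral as before, providing the bound $C(\rho)|a|^2$.

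There is no real obstacle here: the argument is essentially a repetition of the second-moment estimate \eqref{lafoorge}, the only new idea being that the $\xi$-dependence drops out when taking the modulus squared, which is what makes the bound uniform in $\xi$ and allows Lemma \ref{kalk} to feed into the Frechet--Kolmogorov criterion of Proposition \ref{criterion} via integration against the weight $(1+|\xi|^2)^{-u}$ with $u>d/2$.
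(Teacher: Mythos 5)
Your proof is correct and follows essentially the same route as the paper: compute $\bbE[|\hat M_\gep(\xi)|^2]$ exactly via the Gaussian moment formula to get $\int\rho(x)\rho(y)e^{i\xi\cdot(x-y)}e^{|\gamma|^2K_\gep(x,y)}\dd x\dd y$, bound the phase by $1$ (resp.\ $|e^{ia\cdot x}-1|\le |a||x|$ on the compact support of $\rho$), and observe via \eqref{estimao} that the resulting integral is of order $v(\gep)^{-2}$. Your version is in fact slightly more explicit than the paper's about why that integral has the right order.
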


\begin{proof}
 The proof of both bounds follows from direct computation
 We have 
 \begin{multline}
   \bbE\left[|\hat M_{\gep}(\xi)|^2\right]
   = \int_{\bbR^{2d}} \rho(x)\rho(y)e^{i\xi.(x-y)}\bbE \left[ e^{\gamma X_{\gep}(x)+\bar \gamma X_{\gep}(y)-\frac{\gamma^2}{2} K_{\gep}(x)-\frac{\bar \gamma^2}{2} K_{\gep}(y) }\right]\dd x \dd y
   \\= \int_{\bbR^{2d}} \rho(x)\rho(y)
   e^{i\xi.(x-y)} e^{|\gamma|^2 K_\gep(x,y)}\dd x \dd y
   \le \int_{\bbR^{2d}} \rho(x)\rho(y)
   e^{|\gamma|^2 K_\gep(x,y)}\dd x \dd y .
 \end{multline}
The last integral is of order $v(\gep)^2$. 
In the same fashion we have 
\begin{multline}
   \bbE[|\hat M_{\gep}(\xi+a)- \hat M_{\gep}(\xi)|^2]
   \\=\int_{\bbR^{2d}} \rho(x)\rho(y)\left(
   e^{i\xi.x}-   e^{i(\xi+a).x}  \right)\left(
   e^{-i\xi.y}-   e^{-i(\xi+a).y}  \right) e^{|\gamma|^2 K_\gep(x,y)}\dd x \dd y
   \\
   \le  |a|^2 \int_{\bbR^{2d}} \rho(x)\rho(y) |x||y|
   e^{|\gamma|^2 K_\gep(x,y)}\dd x \dd y.
\end{multline}
Since the support of $\rho$ is compact and hence bounded, the last integral is also of order $v(\gep)^2$.
\end{proof}
\noindent Now given $A$ and $d/2 < u'<u $ we define  $K_A:=K^{(1)}_A\cap K^{(2)}_A$ with  
\begin{equation}\begin{split}
K^{(1)}_A&:= \left\{  \varphi \ : \   \int_{\bbR^{d}} |\varphi(\xi)|^2(1+ |\xi|^2)^{-u'}\dd \xi\le A\right\}\\
K^{(2)}_A&:= \left\{ \varphi \ : \  \forall |a|\le 1, \  \|\varphi(\cdot+a)- \varphi \|_{\hat H^{-u}(\bbR^d)}\le A\sqrt{|a|}\right\}
\end{split}\end{equation}
It is immediate to check from Lemma \ref{criterion} that $K^{(1)}_A\cap K^{(2)}_A$ is relatively compact. To conclude the proof of Proposition \ref{tiggh} we only have to check the following

\begin{lemma}
We have 
\begin{equation}
 \lim_{A\to \infty} \sup_{\gep\in (0,\gep_0)} \bbP \left[ v(\xi) \hat M_{\gep}\notin K_A \right]=0.
\end{equation}
\end{lemma}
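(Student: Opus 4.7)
The plan is to decompose the complementary event as
$\{v(\gep)\hat M_\gep \notin K^{(1)}_A\} \cup \{v(\gep)\hat M_\gep \notin K^{(2)}_A\}$, bound each probability via Markov's inequality combined with the estimates of Lemma \ref{kalk}, and verify that both bounds tend to zero uniformly in $\gep$ as $A\to\infty$.

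For $K^{(1)}_A$, Markov's inequality followed by Fubini and the first bound of Lemma \ref{kalk} gives
$$\bbP\!\left[v(\gep)\hat M_\gep \notin K^{(1)}_A\right] \le \frac{1}{A}\int_{\bbR^d}\bbE\!\left[v(\gep)^2 |\hat M_\gep(\xi)|^2\right](1+|\xi|^2)^{-u'}d\xi \le \frac{C(\rho)}{A}\int_{\bbR^d}(1+|\xi|^2)^{-u'}d\xi,$$
and the integral is finite since $u' > d/2$. This accounts for the weaker exponent $u'<u$ used in the definition of $K^{(1)}_A$.

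For $K^{(2)}_A$ the argument is more subtle and crucially exploits the fact that the random function $F_\gep := M^{(\gamma)}_\gep(\rho\cdot)$ has compact support (contained in $\Supp\rho$ for $\gep$ small enough), so that $\hat M_\gep=\hat F_\gep$ is smooth. Writing
$\hat M_\gep(\xi+a)-\hat M_\gep(\xi) = \int_0^1 a\cdot \nabla \hat M_\gep(\xi+ta)\,dt$
and applying Cauchy--Schwarz in $t$, together with the fact that $(1+|\xi|^2)^{-u}$ and $(1+|\xi+ta|^2)^{-u}$ are comparable for $|ta|\le 1$, leads after a change of variable to
$$\|\hat M_\gep(\cdot+a)-\hat M_\gep\|_{\hat H^{-u}}^2 \le C|a|^2 \|\nabla \hat M_\gep\|_{\hat H^{-u}}^2.$$
Since $\partial_j \hat M_\gep$ is the Fourier transform of $ix_j F_\gep$, one has $\|\nabla \hat M_\gep\|_{\hat H^{-u}}^2 = \sum_j \|x_jF_\gep\|_{H^{-u}}^2$ directly from the definition of $\hat H^{-u}$. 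Fixing $\chi\in C^\infty_c(\cD)$ with $\chi\equiv 1$ on $\Supp\rho$ gives $x_j F_\gep = (x_j\chi)F_\gep$, and since multiplication by $x_j\chi\in C^\infty_c$ is bounded on $H^{-u}$, we deduce $\|\nabla \hat M_\gep\|_{\hat H^{-u}} \le C(\rho,u)\|\hat M_\gep\|_{\hat H^{-u}}$. Combining with $|a|\le \sqrt{|a|}$ for $|a|\le 1$ yields
$$\sup_{0<|a|\le 1}\frac{\|v(\gep)\hat M_\gep(\cdot+a)-v(\gep)\hat M_\gep\|_{\hat H^{-u}}}{\sqrt{|a|}} \le C'\|v(\gep)\hat M_\gep\|_{\hat H^{-u}}.$$
Applying Markov together with Lemma \ref{kalk}, the right-hand side exceeds $A$ with probability at most $C''/A^2$, uniformly in $\gep$.

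Combining the two bounds gives $\bbP[v(\gep)\hat M_\gep\notin K_A] \le C/A + C''/A^2 \to 0$ as $A\to\infty$, uniformly in $\gep\in(0,\gep_0)$. The main technical point is the Sobolev multiplier bound $\|\nabla \hat M_\gep\|_{\hat H^{-u}} \le C\|\hat M_\gep\|_{\hat H^{-u}}$, which relies essentially on the compact support of $M^{(\gamma)}_\gep(\rho\cdot)$ together with boundedness of multiplication by compactly supported smooth functions on $H^{-u}$; the remainder is a routine Markov--Fubini manipulation.
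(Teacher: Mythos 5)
Your proof is correct, but the treatment of the equicontinuity condition $K^{(2)}_A$ follows a genuinely different route from the paper's. The paper works entirely from the second moment increment bound of Lemma \ref{kalk}: it applies Markov's inequality only at the dyadic coordinate translations $b_{k,i}=2^{-k}\mathbf{e}_i$, takes a union bound over $(k,i)$, and then upgrades this countable family of estimates to a uniform bound over all $|a|\le 1$ by a chaining/telescoping argument (writing $a$ as a sum of the $b_{k,i}$ and using the quasi-invariance constant $\Phi(u)$ of the weighted norm under translation). You instead prove a \emph{pathwise} translation-Lipschitz bound $\|\hat M_{\gep}(\cdot+a)-\hat M_{\gep}\|_{\hat H^{-u}}\le C|a|\,\|\hat M_{\gep}\|_{\hat H^{-u}}$, exploiting that $M^{(\gamma)}_{\gep}(\rho\,\cdot)$ has density supported in the fixed compact $\Supp\rho$, so that $\nabla\hat M_{\gep}$ is the Fourier transform of $ix F_{\gep}=(ix\chi)F_{\gep}$ and the multiplier $x_j\chi\in C^{\infty}_c$ is bounded on $H^{-u}$ (a standard Peetre-inequality fact, which you should perhaps state as such). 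This reduces everything to a single application of Markov to $\|v(\gep)\hat M_{\gep}\|^2_{\hat H^{-u}}$, whose expectation is finite by the first estimate of Lemma \ref{kalk} and $u>d/2$; in particular you never use the second estimate of Lemma \ref{kalk}, and you obtain the stronger modulus $|a|$ rather than $\sqrt{|a|}$. What each approach buys: yours is shorter and avoids the union bound and chaining, at the cost of invoking the compact support and a Sobolev multiplier theorem; the paper's is purely probabilistic and would survive in situations where no uniform deterministic regularity of $\hat M_{\gep}$ is available, needing only the two moment bounds. The $K^{(1)}_A$ part of your argument coincides with the paper's.
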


 \begin{proof}

 To show that 
 \begin{equation}\label{kkk1}
   \lim_{A\to \infty} \sup_{\gep\in (0,\gep_0)} \bbP \left[ v(\gep) \hat M_{\gep}\notin K^{(1)}_A \right]=0,
 \end{equation}
it is sufficient to observe that from Proposition \ref{kalk}, and the fact that $s'>d/2$, we have
\begin{equation}
  \bbE \left[ \int_{\bbR^{d}} v(\gep)^2 |\hat M_{\gep}(\xi)|^2(1+ |\xi|^2)^{-u'}\dd  \xi \right] < C'(\rho).
  \end{equation}
Then \eqref{kkk1} simply follows from Markov inequality.
Let us now prove 
 \begin{equation}\label{kkk2}
   \lim_{A\to \infty} \sup_{\gep\in (0,\gep_0)} \bbP \left[ v(\gep) \hat M_{\gep}\notin K^{(2)}_A \right]=0.
 \end{equation} 
We introduce 
\begin{equation}
\Phi(u):=\maxtwo{\xi\in \bbR^d}{|a|\le 1}\left(\frac{1+ |\xi-a|^2}{1+ |\xi|^2} \right)^{u/2}
\end{equation}
Using again Lemma \ref{kalk} and Markov inequality we have 
given $b\in \bbR^d$
\begin{equation}
\bbP \left[   \int_{\bbR^d} v(\xi)^2|\hat M_{\gep}(\xi+b)-\hat M_{\gep}(\xi)|^2(1+ |\xi|^2)^{u}\dd \xi \ge u \right] \le  \frac{C|b|^2}{u}
\end{equation}
We apply it to $b_{k,i}= 2^{-k}{\bf e}_i$ for $k\ge 1$ and $i\in \lint 1, d\rint$ where ${\bf e}_i$ are the unit coordinate vectors.
Hence setting
\begin{equation}
 K^{(3)}_A:= \left\{ \varphi  \ : \ \forall (k,i)\in \bbN \times \lint 1, d \rint, \   \|\varphi(\cdot+b_{k,i})-\varphi\|_{\hat H^{-u}(\bbR^d)} \ge \frac{ A 2^{-k/2}(\sqrt{2}-1)}{\sqrt{2}d\Phi(u)}  \right\}.
\end{equation}
We obtain after a union bound over $i$ and $k$ that 
\begin{equation}
 \bbP \left[ v(\xi)\hat M_{\gep} \notin K^{(3)}_A \right] \le  \frac{C}{A^2}
\end{equation}
Then \eqref{kkk2} follows from the inclusion $ K^{(3)}_A\subset K^{(2)}_A$, which we prove now.
Consider $\varphi\in K^{(3)}_A$
Now note that for any $\phi$ 
\begin{equation}\label{zoob}
 \max_{|a|\le 1}\frac{\|\phi(\cdot+a)\|_{\hat H^{-u}(\bbR^d)}}{ \|\phi\|_{\hat H^{-u}(\bbR^d)}}<\Phi(s).
\end{equation}
Applying this to $\phi= \varphi(\cdot +b_{i,k})-\varphi$,
we obtain that for all $(k,i)\in \bbN \times \lint 1, d \rint,$ and   $|a|\le 1$ 
\begin{equation}\label{microscoop}
  \|\varphi(\cdot +a+b_{i,k})-\varphi(\cdot +a) \|_{\hat H^{-s}(\bbR^d)}\le \frac{A 2^{-k/2}(\sqrt{2}-1)}{\sqrt{2}d} 
\end{equation}
Given $a$ with $2^{-k_0}\le |a|\le 2^{1-k_0}$, assuming without loss of generality that all coordinates are positive we can write $a$ in the following form
$$a:= \sum_{i=1}^d \sum_{k\ge k_0} \chi(k,i,a) b_{k,i},$$ 
with $ \chi(k,i,a)\in \{0,1\}$ (the decomposition is not necessarily unique). 
We write $$a_{k,i}:=  \sum_{j=1}^d \sum_{m\ge k_0} \chi(m,i,a) b_{m,j}(\ind_{\{ m\le k-1\}}+\ind_{\{m=k,j\le i\}}).$$
Then using \eqref{microscoop} and the triangle inequality we obtain that for every  $\phi \in  K^{(3)}_A$, $k\ge k_0$ and $i\in \lint 1, d \rint$
\begin{equation}
 \|\varphi(\cdot +a_{i,k})-\varphi \|_{\hat H^{-s}(\bbR^d)}\le A \left(1-\frac{1}{\sqrt{2}}\right)\sum_{m=k_0}^k2^{-k/2}\le A 2^{-k_0/2}\le A\sqrt{|a|}.
\end{equation}
Passing to the limit we obtain that $\|\varphi(\cdot +a)-\varphi \|_{\hat H^{-u}(\bbR^d)} \le A\sqrt{|a|}$, and thus that $\varphi\in K^{(2)}_A$, which concludes the proof.

\end{proof}

\bibliographystyle{plain}
\bibliography{bibliography.bib}

\end{document}